\documentclass[12pt,a4paper]{amsart}%
\usepackage{imakeidx}
\usepackage{amsmath}
\usepackage{fullpage}
\usepackage{comment}
\usepackage{MyCommA}
\usepackage[final]{pdfpages}
\usepackage{tikz-cd}
\makeindex
\usetikzlibrary{calc}

\newcommand{\Dima}[1]{{{#1}}}
\newcommand{\Rami}[1]{{{#1}}}
\newcommand{\RamiA}[1]{{{#1}}}
\newcommand{\RamiB}[1]{{{#1}}}
\newcommand{\RamiC}[1]{{{#1}}}

\newcommand{\Eitan}[1]{{{#1}}}

\newcommand{\NextVer}[1]{}
\newcommand{\sub}{\subset}
\newcommand{\noleft}{\left.\kern-\nulldelimiterspace}

\newtheorem{innercustomthm}{Theorem}

\begin{document}
	
	\author[Aizenbud]{Avraham Aizenbud}
	\address{Avraham Aizenbud,
		Faculty of Mathematical Sciences,
		Weizmann Institute of Science,
		76100
		Rehovot, Israel}
	\email{aizenr@gmail.com}
	\urladdr{https://www.wisdom.weizmann.ac.il/~aizenr/}
	
	\author[Gourevitch]{Dmitry Gourevitch}
	\address{Dmitry Gourevitch,
		Faculty of Mathematical Sciences,
		Weizmann Institute of Science,
		76100
		Rehovot, Israel}
	\email{dimagur@weizmann.ac.il}
	\urladdr{https://www.wisdom.weizmann.ac.il/~dimagur/}

\author[Kazhdan]{David Kazhdan}
	\address{David Kazhdan,
    Einstein Institute of Mathematics, Edmond J. Safra Campus, Givaat Ram The
Hebrew University of Jerusalem, Jerusalem, 91904, Israel}
	\email{david.kazhdan@mail.huji.ac.il}
	\urladdr{https://math.huji.ac.il/~kazhdan/}
	
\author[Sayag]{Eitan Sayag}
	\address{Eitan Sayag,
    Department of Mathematics, Ben Gurion University of the Negev, P.O.B. 653,
Be’er Sheva 84105, ISRAEL}
	\email{eitan.sayag@gmail.com}
	\urladdr{www.math.bgu.ac.il/~sayage}

	\date{\today}
		\keywords{Hilbert scheme, Hilbert-Chow map, positive characteristic, singularity, symmetric power, discriminant}
	\subjclass{14C05, 14B05, 14E15}


	%
	%
	%
	%
	%
	%
	%
	%
	

\title{Invertible top form on the Hilbert scheme of a plane in positive characteristic}
\maketitle
\begin{abstract} 
We prove that the Hilbert scheme of the plane in positive characteristic admits an invertible
top differential form.

This implies certain integrability properties 
of the symmetric powers of the plane. This allows to define a function on the collection of 
\Rami{monic}
polynomials over a local field which can be thought of as a variant of the inverse square root of the discriminant. In characteristic $0$ it essentially coincides with this inverse square root, however in general it is quite different, \Eitan{and} unlike this inverse square root, it is locally summable.
In \Rami{a sequel work} \cite{AGKS3} we use this local summability in order to prove the positive characteristic analog of Harish-Chandra's \Eitan{local} integrability theorem \Eitan{of characters of representations} under certain conditions. 

The main results of this paper are known in characteristic zero. In fact a stronger result is known: there is a symplectic form on the Hilbert scheme of a plane. 
\end{abstract}

\tableofcontents 
\section{Introduction}

Throughout the paper we fix a  field \tdef{$F$} 
of arbitrary characteristic. 
We will also fix a natural number \tdef{$n$}.

\subsection{The Hilbert Scheme}
In order to formulate our results let us first recall the definition of the Hilbert scheme.
\begin{definition}\label{def:Hscheme}
    Let $Sch_F$ be the category of $F$-Schemes.
    For an $F$-algebraic variety $\bfZ$ define the Hilbert functor 
    $Hilb_n(\bfZ): Sch_F^{op}\to sets$  by 
   \begin{multline*}
       \mdef{Hilb_n(\bfZ)(\bfS)}:=\\ 
   \{\text{sub-scheme } \bfY\sub \bfS\times \Rami{\bfZ}\, \vert \, (pr_{\bfS})_*(\cO_{\bfY})
   \text{ is locally free of rank }n\text{ over }\bfS,\}
   \end{multline*} 
   where $pr_{\bfS}$ is the projection.
\end{definition}

\begin{theorem}[{\cite{Gro62}, see also \cite[Theorem 7.2.3]{BK05}}]\label{thm:HScheme}
If $\bfZ$ is a quasi-projective variety then 
    the Hilbert functor $Hilb_n(\bfZ)$ is representable by a scheme which we denote by $\mdef{\bfZ^{[n]}}$. 
\end{theorem}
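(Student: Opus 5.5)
The plan is to follow Grothendieck's construction through the Grassmannian, reducing first to projective space and then to a closed subscheme of a Grassmannian cut out by flattening conditions.

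\emph{Reduction to projective space.} Embed $\bfZ$ as a locally closed subscheme of some $\mathbb{P}^N_F$, so $\bfZ=\overline{\bfZ}\setminus \bfW$ with $\overline{\bfZ}$ closed in $\mathbb{P}^N$ and $\bfW$ closed. Representability of $Hilb_n(\overline{\bfZ})$ follows from that of $Hilb_n(\mathbb{P}^N)$, because the condition $\bfY\subset \bfS\times\overline{\bfZ}$ is a closed condition on $\bfS$; one sees this by applying the standard argument to the ideal sheaf pushed forward and twisted. The condition $\bfY\cap(\bfS\times \bfW)=\emptyset$ is open on $\bfS$, since $pr_{\bfS}$ restricted to $\bfY$ is finite and hence closed. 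So $\bfZ^{[n]}$ is an open subscheme of $\overline{\bfZ}^{[n]}$, and it suffices to treat $\bfZ=\mathbb{P}^N$.

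\emph{Embedding into a Grassmannian.} For finite flat families of length $n$, the Hilbert polynomial is the constant $n$. By Castelnuovo--Mumford regularity, every length $n$ subscheme $Y\subset\mathbb{P}^N$ has $n$-regular ideal sheaf, and this bound is uniform in families. Take $m\ge n$. Then for every $\bfY\in Hilb_n(\mathbb{P}^N)(\bfS)$ the map $H^0(\cO(m))\otimes\cO_{\bfS}\to (pr_{\bfS})_*\cO_{\bfY}(m)$ is surjective onto a locally free rank $n$ quotient. Moreover $\bfY$ is recovered from its kernel, since that kernel generates the ideal in degrees $\ge m$. This gives a monomorphism $Hilb_n(\mathbb{P}^N)\to Gr=\mathrm{Gr}(H^0(\cO(m)),n)$ into the Grassmannian of rank $n$ quotients.

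\emph{Identifying the image.} The image should be cut out by a closed condition. Let $K$ be the tautological subbundle on $Gr$. The subsheaf $K\cdot H^0(\cO(1))\subset H^0(\cO(m+1))\otimes\cO$ should have quotient of rank $\le n$, which is a determinantal, hence closed, condition. Together with Gotzmann persistence, this forces the quotient to be locally free of rank $n$ in all higher degrees, so the ideal generated by $K$ defines a flat family of length $n$. The main obstacle is exactly this step: showing that the locus where the Hilbert function is correct is closed and that the universal family over it is flat, i.e.\ that this locus represents the functor and not merely its points. I would first try to use Gotzmann's persistence theorem directly. If that fails, I would fall back on Mumford's flattening stratification for the universal family over $Gr$.

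Since the statement is classical (\cite{Gro62}, \cite[Theorem 7.2.3]{BK05}), in the paper one would simply cite these references.
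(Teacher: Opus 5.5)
The paper gives no argument for this statement; it quotes it from Grothendieck (via Brion--Kumar), and your outline is the standard Grassmannian construction behind that citation. The reduction to $\mathbb{P}^N$, the $n$-regularity of length-$n$ subschemes, and the monomorphism into $\mathrm{Gr}(H^0(\mathcal{O}(m)),n)$ for $m\ge n$ are all fine.

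However, the step you single out as the main obstacle contains an actual error. Let $M$ be the cokernel of the multiplication map $\mu\colon K\otimes H^0(\mathcal{O}(1))\to H^0(\mathcal{O}(m+1))\otimes\mathcal{O}_{\mathrm{Gr}}$. Requiring $M$ to have rank $\le n$ at every point means $\operatorname{rank}\mu\ge h^0(\mathcal{O}(m+1))-n$, i.e.\ that some minor of that size is nonzero. This is an open condition, not a closed one. It is also vacuous: by Macaulay's bound on the growth of Hilbert functions, $\dim M\otimes k(x)\le n^{\langle m\rangle}=n$ at every $x\in\mathrm{Gr}$ once $m\ge n$.

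The determinantal closed condition is the reverse inequality, namely the vanishing of $\mathrm{Fitt}_{n-1}(M)$, i.e.\ of the minors of $\mu$ of size $h^0(\mathcal{O}(m+1))-n+1$. Since Macaulay gives $\mathrm{Fitt}_n(M)=\mathcal{O}_{\mathrm{Gr}}$, a morphism $\bfS\to\mathrm{Gr}$ factors through $V(\mathrm{Fitt}_{n-1}(M))$ exactly when the pulled-back $M$ is locally free of rank $n$. The relative form of Gotzmann persistence (the Gotzmann number of the constant polynomial $n$ is $n$) then makes all graded quotients in degrees $\ge m$ locally free of rank $n$. So the family cut out by $K$ is finite flat of degree $n$, and $V(\mathrm{Fitt}_{n-1}(M))$ represents $Hilb_n(\mathbb{P}^N)$. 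Macaulay's theorem is the ingredient missing from your version.

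That said, the theorem only asserts representability, not projectivity, so closedness is not really needed. The locally closed locus where $M$ is locally free of rank $n$, or the flattening stratum of your fallback, already represents the functor. On it, the degree-$m$ part of the ideal of the family automatically equals $K$, since both are corank-$n$ subbundles of $H^0(\mathcal{O}(m))\otimes\mathcal{O}$ and one contains the other.

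Finally, your reduction uses that $\bfY\to\bfS$ is finite. This is the standard definition and what the cited references treat. But Definition~\ref{def:Hscheme} as written only asks that $(pr_{\bfS})_*\mathcal{O}_{\bfY}$ be locally free of rank $n$. For $\bfZ=\mathbb{P}^2$ and $n=1$ this would also admit $\bfY=\bfS\times L$ with $L$ a line. So finiteness of $\bfY$ over $\bfS$ has to be read into the definition for your argument, and the theorem, to apply.
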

\begin{theorem}[{See {\it e.g.} \cite[Theorem 7.4.1]{BK05}}]\label{thm:dim.hilb}
If $\bfZ$ is a smooth  quasi-projective irreducible algebraic surface then 
    $\bfZ^{[n]}$ is a smooth irreducible variety of dimension $2n$.
\end{theorem}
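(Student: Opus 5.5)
The plan is Fogarty's argument, which works in every characteristic: first smoothness via a tangent-space computation, then irreducibility via the punctual Hilbert scheme and the Hilbert--Chow morphism.

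\emph{Smoothness.} By Theorem \ref{thm:HScheme}, $\bfZ^{[n]}$ is a scheme. Take a point $[\bfY]$ with ideal sheaf $I$. The tangent space is $\mathrm{Hom}_{\cO_\bfZ}(I,\cO_\bfY)$ and the obstruction space sits in $\mathrm{Ext}^1_{\cO_\bfZ}(I,\cO_\bfY)$. Since $\bfY$ is finite, everything is local, and we may replace $\bfZ$ by $\mathrm{Spec}$ of a regular local ring of dimension $2$ at each support point.
\begin{itemize}
\item Writing $\mathrm{Hom}(I,\cO_\bfY)$ via $0\to I\to\cO\to\cO_\bfY\to 0$, it suffices to show $\dim \mathrm{Hom}(I,\cO_\bfY)=2n$.
\item For an ideal of finite colength in a $2$-dimensional regular local ring, Hilbert--Burch gives a free resolution $0\to \cO^{r}\to\cO^{r+1}\to I\to 0$. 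Hence $\mathrm{Ext}^i(I,\cO_\bfY)=0$ for $i\ge 2$.
\item The Euler characteristic $\chi(I,\cO_\bfY)=\dim\mathrm{Hom}-\dim\mathrm{Ext}^1$ therefore equals $(r+1)n-rn=n$.
\item By Serre duality on the surface, $\mathrm{Ext}^1(I,\cO_\bfY)\cong \mathrm{Ext}^2(\cO_\bfY,\cO_\bfY)^\vee$, using that $\omega$ is locally trivial. This has dimension $\dim\mathrm{Hom}(\cO_\bfY,\cO_\bfY)=n$ by local duality for finite-length modules.
\end{itemize}
Hence $\dim T_{[\bfY]}=2n$ at every point. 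All these tools (Hilbert--Burch, local duality) are characteristic-free.

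To conclude smoothness, it is then enough to exhibit an open dense locus of dimension $2n$ and show every point lies on a component of dimension $\ge 2n$. A cleaner route is to note that $\mathrm{Ext}^2$ vanishes, so the obstructions for $I$ as a module lie in a space whose trace part vanishes. Alternatively, one can use the fact that the local Hilbert scheme of $\mathbb{A}^2$ is smooth by Ellingsrud--Strømme-type cell arguments. I would take the trace-obstruction route, as in Fogarty.

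\emph{Irreducibility.} The open set $U$ of reduced subschemes, $n$ distinct points, is isomorphic to the complement of the diagonals in $\bfZ^{(n)}$. It is irreducible of dimension $2n$. It remains to show $U$ is dense. Stratify by the partition type of the support. The stratum over a point of multiplicity type $(n_1,\dots,n_k)$ is fibered over an open part of $\bfZ^{(k)}$ with fibers $\prod_i \mathrm{Hilb}^{n_i}_0$, where $\mathrm{Hilb}^m_0$ is the punctual Hilbert scheme at a smooth point. Briançon's theorem (extended by Iarrobino to any characteristic) gives $\dim \mathrm{Hilb}^m_0=m-1$. Thus the stratum has dimension $2k+\sum(n_i-1)=n+k<2n$ unless $k=n$. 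Since $\bfZ^{[n]}$ is smooth and connected (connectedness via Hartshorne's theorem, or again by dimension), every component has dimension exactly $2n$. So no component can lie in the boundary, and $U$ is dense.

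\emph{Main obstacle.} The key step is the bound $\dim \mathrm{Hilb}^m_0\le m-1$ in arbitrary characteristic. I would prove it by induction on $m$, using the incidence variety $\bfZ^{[m-1,m]}$ and counting the dimension of ideals $J\subset I$ of colength one supported at the origin. Alternatively, I would use a Gröbner/torus-fixed-point cell decomposition of $\mathrm{Hilb}^m_0(\mathbb{A}^2)$, indexed by partitions of $m$, whose cells have dimension at most $m-1$.
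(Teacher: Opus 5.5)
The paper does not prove this statement; it quotes Fogarty's theorem from Brion--Kumar, so your proposal has to stand on its own. Your tangent-space computation is fine: $\dim \mathrm{Hom}(I,\cO_\bfY)=2n$ at every point. The gap is the step from there to smoothness. You need $\dim_x \bfZ^{[n]}\ge 2n$ at every point. The obstruction space $\mathrm{Ext}^1(I,\cO_\bfY)$ has dimension $n$, not $0$, so deformation theory alone only gives $\dim_x \bfZ^{[n]}\ge 2n-n=n$.

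Your ``trace-obstruction route'' does not supply the missing bound as stated. Locally, $\mathrm{Ext}^2_R(I,I)=0$ because $\mathrm{pd}_R I=1$, which makes $I$ unobstructed as an abstract $R$-module. You would still have to show that module deformations of $I$ agree with deformations of $I$ as an ideal with flat quotient, and that comparison is the real content. Globally, the traceless-obstruction theorem for sheaves with fixed determinant needs a projective compactification and an identification of $\bfZ^{[n]}$ with a fixed-determinant moduli functor, and it postdates Fogarty, so it is not his argument. Worse, your irreducibility step uses smoothness to get that every component has dimension $2n$. As written the proof is therefore circular: density of $U$ is deduced from smoothness, while smoothness needs exactly the lower bound that density of $U$ would provide.

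Either of two characteristic-free inputs closes the gap.

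(1) Unobstructedness via Hilbert--Burch. For Artinian $A$, a flat deformation $I_A\subset R\otimes A$ is again the ideal of maximal minors of an $(r+1)\times r$ matrix lifting the Hilbert--Burch matrix. To see this, apply Hilbert--Burch to the lifted resolution; the scalar factor reduces to a unit, hence is a unit. Lifting the entries arbitrarily to a small extension $A'\to A$ gives a complex of free modules that is exact modulo $\mathfrak m_{A'}$, so its cokernel is $A'$-flat. Hence the Hilbert functor is formally smooth, $\bfZ^{[n]}$ is smooth of dimension $2n$, and your stratification argument then gives irreducibility.

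(2) The classical open--closed argument. Put $W=\overline{U}$. Since $\dim W=2n=\dim T_x$, every $x\in W$ is a regular point, so $W$ is the only component through $x$. Thus $W$ is open as well as closed, and connectedness of $\bfZ^{[n]}$ forces $\bfZ^{[n]}=W$. You do mention connectedness, but you invoke it only after smoothness, where it is superfluous; and ``again by dimension'' cannot give it. For $\bA^2$, connectedness follows by degenerating along a generic one-parameter subgroup of the torus to monomial ideals, each of which lies in $\overline{U}$.

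In route (2) the bound $\dim \mathrm{Hilb}^m_0\le m-1$ plays no role. So what you flag as the main obstacle is not where the difficulty of this theorem lies; the difficulty is the lower bound on local dimension.
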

\subsection{Main results}
We prove
the following:
\begin{introtheorem}\label{thm:main}
There exists an invertible top differential form on     $(\bA^2)^{[n]}$. 
\end{introtheorem}

\subsection{Relation to the singularities of the symmetric power of the plane}
\Cref{thm:main} is related to the singularities of  the symmetric power of the plane. In order to formulate \Eitan{this relation} we introduce some notations:

\begin{definition}
    For a quasi projective algebraic variety $\bfZ$ 
    define its symmetric power by:   
    $$\mdef{\bfZ^{(n)}}:=\bfZ^n//S_n.$$
    Here $//$ denotes the categorical quotient. By \RamiC{\Cref{cor:fac}} below this quotient exists.
\end{definition}
\begin{notation}
Let $\bfZ$ be a quasi-projective variety. Let $x\in \bfZ^{[n]}(\bar F)$. It corresponds to a sheaf of ideals $\cI_x \subset \Dima{\cO}_{\bfZ_{\bar F}}$. For any $z\in \bfZ(\bar F)$ denote $$\mdef{n_x(z)}:=\dim (\Dima{\cO}_{\bfZ_{\bar F},z}/(\cI_x)_{z}).$$   
\RamiC{This gives}
a multiset in $\bfZ(\bar F)$ of size $n$. By \Cref{lem:GamBij} below, we can interpret this multiset as a point in $\bfZ^{(n)}(\bar F)$. Denote this point by    
    $\mdef{\mathfrak{H}_{\bfZ,n}(x)}$.
\end{notation}
\begin{theorem}[{\cite[II.2,II.3]{Ive}, \cite[Theorem 7.3.1]{BK05}\footnote{The theorem is formulated in \cite{BK05} for algebraically closed fields. However, it is based on results from \cite{Ive} which do not make this assumption, so the proof is valid for any field.}}]\label{thm:charact-HilChow}
    Let $\bfZ$ be a quasi-projective variety. There exists (and unique) a projective morphism $\mdef{\mathfrak{H}_{\bfZ,n}}:\bfZ^{[n]}\to \bfZ^{(n)}$ that gives on the level of $\bar F$ points the map $\mathfrak{H}_{\bfZ,n}$ defined above. 

    This morphism is called the \tdef{Hilbert-Chow morphism}.
\end{theorem}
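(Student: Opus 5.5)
The plan is to construct $\mathfrak{H}_{\bfZ,n}$ functorially, via the norm (determinant) of a finite flat family, and then glue. First treat the affine case $\bfZ=\operatorname{Spec}A$. A point of $Hilb_n(\bfZ)(\bfS)$ with $\bfS=\operatorname{Spec}R$ is a quotient $A\otimes R\to B$, where $B$ is a locally free $R$-module of rank $n$. Sending $a\mapsto \det_R(\text{multiplication by }a\text{ on }B)$ defines a multiplicative polynomial law $A\to R$, homogeneous of degree $n$. By Roby's theorem (in the form used by Ferrand and Iversen \cite{Ive}), such laws correspond bijectively to ring homomorphisms $\Gamma^n_F(A)\to R$. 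Here $\Gamma^n_F(A)$ denotes the degree-$n$ divided powers.

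Since $F$ is a field, $A$ is flat over $F$, so $\Gamma^n_F(A)=TS^n(A)=(A^{\otimes n})^{S_n}$. This is the coordinate ring of $\bfZ^n//S_n=\bfZ^{(n)}$, and it holds in every characteristic. This is the one point where positive characteristic needs care: one must use divided powers rather than $\mathrm{Sym}^n$, and invoke flatness over a field. The construction is natural in $R$, so it gives a morphism of functors $Hilb_n(\bfZ)\to \bfZ^{(n)}$, hence a morphism of schemes by \Cref{thm:HScheme}.

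Next globalize. Since $\bfZ$ is quasi-projective, every finite set of points of $\bfZ$ lies in an affine open. Hence $\bfZ^{(n)}$ is covered by the opens $\bfU^{(n)}$ with $\bfU\subset\bfZ$ affine; this is also how one shows that the quotient in \Cref{cor:fac} exists. Moreover, the preimage of $\bfU^{(n)}$ in $\bfZ^{[n]}$ is $\bfU^{[n]}$, i.e.\ the subschemes supported in $\bfU$. The norm construction is compatible with restriction to smaller affines $\bfU'\subset \bfU$, because localization commutes with taking determinants. So the affine morphisms glue to a global $\mathfrak{H}_{\bfZ,n}$.

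On $\bar F$-points the morphism is the claimed map. Indeed, $B=\prod_z \cO_{\bfZ,z}/(\cI_x)_z$, and the determinant of multiplication by $a$ on the local factor at $z$ is $a(z)^{n_x(z)}$, because that factor is local Artinian with residue field $\bar F$. Hence the norm equals $\prod_z a(z)^{n_x(z)}$, which is exactly the evaluation at the multiset $\mathfrak{H}_{\bfZ,n}(x)$ under \Cref{lem:GamBij}.

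For projectivity, I expect the main obstacle to be properness. I would check the valuative criterion using flat limits: given a DVR $R$ and a family over its fraction field whose image in $\bfZ^{(n)}$ extends over $R$, the support stays in a fixed affine $\bfU$. One then takes the closure in $\bfU_R$, which is finite flat of rank $n$ over $R$. Quasi-projectivity of $\bfZ^{[n]}$, which holds since it is a locally closed subscheme of a Grassmannian-type Hilbert scheme of $\overline{\bfZ}$, together with properness gives projectivity.

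Finally, uniqueness. I would read it as uniqueness of a morphism that is functorial in the above sense. Alternatively, when $\bfZ^{[n]}$ is reduced, it follows because a morphism from a reduced scheme to a separated scheme is determined by its $\bar F$-points. This is the case of interest by \Cref{thm:dim.hilb}.
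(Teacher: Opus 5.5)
Your proposal is correct and is essentially the argument of the sources the paper cites: the norm or linear determinant of \cite[II.2,II.3]{Ive}, identified via $\Gamma^n_F(A)=(A^{\otimes n})^{S_n}$ and glued over the affine opens $\bfU^{(n)}$, as in \cite[Theorem 7.3.1]{BK05}. The paper itself gives no proof beyond that citation, and your caveat that uniqueness is only automatic when $\bfZ^{[n]}$ is reduced is apt.

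The one step you assert without justification is that the closure of $\bfY_K$ in $\bfU_R$ is finite over $R$, and this is not automatic since $\bfU_R$ is affine. It holds because the given $R$-point of $\bfU^{(n)}$ puts the coefficients of the characteristic polynomial of every $a\in\cO(\bfU)$ acting on $\cO(\bfY_K)$ into $R$, since these coefficients are the images of elementary symmetric tensors. By Cayley--Hamilton, the image of $\cO(\bfU)\otimes R$ is then generated by finitely many integral elements, hence is finite over $R$. Alternatively, you can skip the valuative criterion and get projectivity by base change of the projective morphism $\overline{\bfZ}^{[n]}\to\overline{\bfZ}^{(n)}$ along the open subset $\bfZ^{(n)}\subset\overline{\bfZ}^{(n)}$.
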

Theorems \ref{thm:dim.hilb} and \ref{thm:charact-HilChow} imply:
\begin{cor}\label{cor:Hil.chow.res}
Let $\bfZ$ be a (quasi-projective) smooth surface. Then 
    the Hilbert-Chow map ${\mathfrak{H}_{\bfZ,n}}:\bfZ^{[n]}\to \bfZ^{(n)}$ is a resolution of singularities. 
\end{cor}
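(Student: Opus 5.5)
The plan is to check the three properties that make up a resolution of singularities: $\bfZ^{[n]}$ is smooth, $\mathfrak{H}_{\bfZ,n}$ is proper, and $\mathfrak{H}_{\bfZ,n}$ is birational.

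\textbf{Reductions.} First I reduce to the case where $\bfZ$ is irreducible, which is the hypothesis of \Cref{thm:dim.hilb}. A smooth surface is a disjoint union of its irreducible (connected) components. Both $\bfZ^{[n]}$ and $\bfZ^{(n)}$ decompose compatibly as disjoint unions over the ways of distributing the $n$ points among the components. I will take this decomposition for granted without a separate check, and the map $\mathfrak{H}_{\bfZ,n}$ respects it. I may also base change to $\bar F$ when checking equalities of morphisms and density statements, since these are fpqc-local.

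\textbf{Smoothness and properness.} By \Cref{thm:dim.hilb}, $\bfZ^{[n]}$ is smooth and irreducible of dimension $2n$. By \Cref{thm:charact-HilChow}, $\mathfrak{H}_{\bfZ,n}$ is projective, hence proper. The target $\bfZ^{(n)}=\bfZ^n//S_n$ is irreducible of dimension $2n$, because it is dominated by the irreducible $\bfZ^n$ through a finite map.

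\textbf{Birationality.} This is the main step. Let $\bfV\subset \bfZ^n$ be the open locus of pairwise distinct $n$-tuples, and let $\bfU\subset\bfZ^{(n)}$ be its image. Since $S_n$ acts freely on $\bfV$, the map $\bfV\to\bfU$ is a finite étale Galois cover and $\bfU=\bfV/S_n$ is open in $\bfZ^{(n)}$. I would verify this from the construction of the categorical quotient in \Cref{cor:fac}, since $\bfV$ is $S_n$-stable and saturated.

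On $\bfV$, consider the union of the graphs $\Gamma_i$ of the $n$ projections $\bfV\to\bfZ$. These graphs are pairwise disjoint closed subschemes of $\bfV\times\bfZ$, each mapping isomorphically to $\bfV$. Hence their union is finite and flat of rank $n$ over $\bfV$, and it defines a morphism $\bfV\to\bfZ^{[n]}$. This morphism is $S_n$-invariant, so it descends, for instance by étale descent along $\bfV\to\bfU$, to a morphism $s:\bfU\to\bfZ^{[n]}$.

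\textbf{Checking that $s$ is inverse to $\mathfrak{H}_{\bfZ,n}$ over $\bfU$.}
\begin{itemize}
\item The composite $\mathfrak{H}_{\bfZ,n}\circ s$ is the identity on $\bar F$-points. Since $\bfU$ is reduced and $\bfZ^{(n)}$ is separated, it equals the identity.
\item Conversely, a point $x\in\mathfrak{H}_{\bfZ,n}^{-1}(\bfU)(\bar F)$ is a length-$n$ subscheme supported at $n$ distinct points, each of multiplicity $1$, so it is reduced. Hence $s\circ \mathfrak{H}_{\bfZ,n}$ agrees with the identity on $\bar F$-points of the open set $\mathfrak{H}_{\bfZ,n}^{-1}(\bfU)$.
\item That open set is smooth, hence reduced, and $\bfZ^{[n]}$ is separated. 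So $s\circ\mathfrak{H}_{\bfZ,n}$ is the identity there as well.
\end{itemize}
Finally, $\bfU$ is dense in $\bfZ^{(n)}$, and its preimage is nonempty and therefore dense in the irreducible $\bfZ^{[n]}$.

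\textbf{Expected obstacle.} I expect the main difficulty to be the descent step, namely identifying $\bfU$ with the free quotient $\bfV/S_n$ inside the categorical quotient. My first approach would be to compute locally on $S_n$-stable affine opens, using that taking invariants commutes with localization at invariant functions.
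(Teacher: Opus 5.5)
Your proposal is correct and follows the paper's own one-line deduction. Smoothness and irreducibility come from \Cref{thm:dim.hilb}, and properness comes from the projectivity in \Cref{thm:charact-HilChow}; the birationality that the paper leaves implicit in the $\bar F$-point description of $\mathfrak H_{\bfZ,n}$ is correctly supplied by your explicit inverse $s$ over the distinct-points locus. The descent step you flag as the obstacle is already covered by \Cref{lem:GamU}, which gives openness of $\bfU$ and $\iota_{\bfZ,n}^{-1}(\bfU)=\bfV$, together with the universal property of $\bfV\to\bfV//S_n$ (or \Cref{cor:GalDes}), since $\bfZ^{[n]}$ is a variety.
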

\Cref{thm:main} is related to the properties of this resolution. In order to formulate these relations we make:
\begin{defn}
$\,$
\begin{enumerate}[(i)]
\item We recall that a \tdef{modification} $\gamma:\tilde\bfV\to \bfV$ of algebraic varieties is a birational proper morphism.
\item 
We call a modification $\gamma:\tilde\bfV\to \bfV$ of algebraic varieties \tdef[integrable modification]{integrable} if for any open $\bfU\sub \bfV$ and any top-form $\omega$ on the smooth locus of $\bfU$ the \RamiA{rational} form $\gamma^*(\omega)$ \RamiA{on $\gamma^{-1}(\bfU)$ is} regular on the smooth locus of $\gamma^{-1}(\bfU)$. 
\item We call such modification \tdef[sharply integrable modification]{sharply integrable} if $\gamma^*(\omega)$ vanishes only on $\gamma^{-1}(\bar \bfD)$ where $\bfD$ is the zero locus of $\omega$.
\item We call a variety \tdef[sharply integrable variety, integrable variety]{(sharply) integrable} if it admits a (sharply) integrable resolution of singularities.
\end{enumerate}    
\end{defn}
\begin{remark}\label{rem:rat.sing}
    In characteristic zero, one can show that TFAE:\footnote{See e.g.  \cite[{Appendix B,} Proposition 6.2]{AA})}
    \begin{enumerate}[(a)]
        \item the singularities of $\bfZ$ are rational,
        \item\label{it:intro.2} $\bfZ$ is integrable and Cohen-Macaulay.
    \end{enumerate}
    In positive characteristic there is no single accepted definition of rational singularities, and one can take condition \eqref{it:intro.2} as a definition.    
\end{remark}

We will see that \Cref{thm:main} follows from:
\begin{introtheorem}
\label{thm:int}
    The Hilbert-Chow map $$\mathfrak H_{\bA^2,n}:(\bA^2)^{[n]}\to (\bA^2)^{(n)}$$ is a sharply integrable modification.
\end{introtheorem}
This implies:
\begin{introcorollary}\label{cor:int}
    $(\bA^2)^{(n)}$ is sharply integrable.
\end{introcorollary}
\begin{remark}\label{rem:2dir}
    In fact, it is easy to see that     
    \Cref{thm:int} and \Cref{thm:main} are equivalent. So one could instead prove directly \Cref{thm:main} and deduce \Cref{thm:int}.
\end{remark}
\subsection{Background and motivation}
\subsubsection{The characteristic zero case}
The characteristic zero  counterpart of the main results of  this paper is well known. In fact, stronger results are known. 
Namely, the Hilbert-Chow map for smooth surfaces in characteristic $0$ is a symplectic resolution \Rami{(see {\it e.g.} \cite[Theorem 1.17]{Nak})}.
This implies the characteristic zero  counterpart of \Cref{thm:int}. This also implies that 
 in characteristic zero the Hilbert scheme of the plane is a symplectic variety. This in turn implies the characteristic $0$  counterparts of \Cref{thm:main} and \Cref{cor:int}.  In addition, $(\bA^2)^{(n)}$ is a quotient of algebraic variety by a finite group, therefore, by \cite[Corollaire]{Boutot}, in characteristic zero its singularities are rational. As mentioned in \Cref{rem:rat.sing} this is equivalent to the fact that it is integrable and Cohen-Macaulay. 
\subsubsection{Relation to local finiteness of measures}
If the field $F$ is local, integrability of an algebraic variety $\bfZ$ implies that given a top form $\omega$ on its smooth locus, the corresponding measure  $|\omega|$ on $\bfZ(F)$ is locally finite.

Therefore, given a (locally) dominant map $\phi:\bfZ\to \bfY$ to  a smooth variety and a function $f\in C_c^\infty(\bfZ(F))$, the measure $\phi_*(f|\omega|)$ is also locally finite. Since it is also absolutely continuous (w.r.t. a smooth invertible measure on $\bfY(F)$), this measure has a locally summable density function.

Applying this consideration to the map $(\bA^2)^{(n)}\to (\bA^1)^{(n)}$ \RamiC{(induced by the projection $\bA^2\to \bA^1$)}  we get a \RamiC{locally summable} density function $\eta$ (defined up to multiplication by a smooth compactly supported function) on $(\bA^1)^{(n)}(F)$. Note that $(\bA^1)^{(n)}$ is naturally identified with the space of monic polynomials of degree $n$.

Over $\C$ this function is the absolute value of the inverse square root of the discriminant -- $|\Delta|^{-\frac 12}$.
Over a general  local field of characteristic zero, this function is bounded from above and from below by a constant times $|\Delta|^{-\frac 12}$. However, in positive characteristic this is no longer true. Moreover, in small positive characteristic the function $|\Delta|^{-\frac 12}$ is not locally summable. One can consider $\eta$ as a better behaved version of $|\Delta|^{-\frac 12}$. In \Rami{a sequel work} \cite{AGKS3} we use the local summability of $\eta$ in order to prove the positive characteristic analog of Harish-Chandra's integrability theorem under certain conditions. It turns out that for the sake of this theorem $\eta$ actually plays the role of $|\Delta|^{-\frac 12}$.
\subsection{Idea of the proof}

We define a closed subset $$(\bA^2)^{(n)}_{diag}\subset (\bA^2)^{(n)}$$ which corresponds to the diagonal copy of $\bA^2$. 
We prove:
\begin{introlem}[For a precise formulation see \Cref{lem:n1n2}]\label{lem:n1n2Int}
    Outside $(\bA^2)^{(n)}_{diag}$ the Hilbert-Chow map looks (locally in the {\'e}tale topology) like a product of the Hilbert-Chow maps for smaller values of $n$. 
\end{introlem}

We use this Lemma together with the induction hypothesis in order to prove the theorem outside \Rami{$(\bA^2)^{(n)}_{diag}$}. Then we use the fact that the complement to $$\mathfrak H_{\bA^2,n}^{-1}((\bA^2)^{(n)}_{diag})$$ in $(\bA^2)^{[n]}$ is big in order to deduce the result. 

This strategy works only for $n>2$, for $n=2$ we prove the  theorem by an explicit computation.

\subsection{Structure of the paper}
In \S \ref{sec:conv} we fix conventions that will be used throughout the paper.

In \S \ref{sec:fac} we study quotients of varieties by finite group actions.
In \S \ref{sec:Y}  we prove \Cref{thm:int}. In \S \ref{subsec:Pfn1n2} we prove \Cref{lem:n1n2Int}.

In \S \ref{sec:Pfmain} we prove \Cref{thm:main}.
\subsection{Acknowledgments}
 We wish to thank Pavel Etigof and Victor Ginzburg for discussing with us the situation with Hilbert scheme in positive characteristic.

During the preparation of this paper, A.A., D.G. and E.S. were partially supported by the ISF grant no. 1781/23. 
D.K. was partially supported by an ERC grant 101142781.

\section{Conventions}\label{sec:conv}
\begin{enumerate}[(a)]
    \item By a \tdef{variety} we mean a reduced scheme of finite type over $F$. 
    \item When we consider a fiber product of varieties, we always consider it in the category of schemes. \Rami{We use set-theoretical notations to define subschemes, whenever no ambiguity is possible.} 
    \item We will usually denote algebraic varieties by bold face letters (such as $\bfX$) and the spaces of their $F$-points by the corresponding usual face letters (such as $X:=\bfX(F)$). We use the same conventions when we want to interpret vector spaces as algebraic varieties.
    \item We will use the same letter to denote a morphism between algebraic varieties and the corresponding map between the sets of their $F$-points.
    \item We will use the symbol \tdef{$\square$} in a middle of a square diagram  in order to indicate that the square is Cartesian. 
    \item We will use numbers in a middle of a square diagram in order to refer to the square by the corresponding number.
    \item A \tdef{big open set} of an algebraic variety $\bfZ$  is an open set whose complement is of co-dimension at least 2 (in each component).    
    \item For a  variety $\bfZ$ we denote its smooth locus  by $\bfZ^{sm}$.
    \item For a smooth variety $\bfZ$ we denote by $\mdef{\Omega^{top}}(\bfZ)$ the sheaf of top differential forms on $\bfZ$.
    \item \Rami{For a variety $\bfZ$ and  a field  extension $E/F$, denote by \tdef{$\bfZ_E$} the extension of scalars to $E$. We use similar notation for morphisms.}
\end{enumerate}

\section{Factorizable actions}\label{sec:fac}
In this section we give some standard facts about quotients of an algebraic variety by a finite group which are slightly less standard in positive characteristic.

\begin{definition}
    Let a finite group $\Gamma$ act on a variety $\bfZ$. We say that this action is \tdef[factorizable action]{factorizable} if the categorical quotient \tdef{$\bfZ//\Gamma$} exists (as a variety), and the map $\bfZ \to \bfZ//\Gamma$ is finite. 
\end{definition}

\begin{lemma}\label{lem:GamBij}
    Let a finite group $\Gamma$ act factorizably on a variety $\bfZ$. Then
the map $\gamma:\bfZ(\RamiC{\bar F})/\Gamma \to (\bfZ//\Gamma)(\RamiC{\bar F})$ is a bijection, where $\bfZ(\RamiC{\bar F})/\Gamma$ denotes the set of $\Gamma$-orbits in $\bfZ(\RamiC{\bar F})$. 
\end{lemma}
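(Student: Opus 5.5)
We need to show that $\gamma:\bfZ(\bar F)/\Gamma\to(\bfZ//\Gamma)(\bar F)$ is well defined, surjective and injective. Write $\pi:\bfZ\to\bfY:=\bfZ//\Gamma$ for the quotient map. The plan is to reduce to the affine case and then use the standard invariant-theory facts about $\cO(\bfZ)^\Gamma$. Well-definedness is immediate, since $\pi$ is $\Gamma$-invariant.

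First, the reduction to the affine case. By assumption $\pi$ is finite, hence affine. So over an affine open $\bfV\subset\bfY$ the preimage $\pi^{-1}(\bfV)$ is an affine $\Gamma$-stable open set. I will argue that $\bfV$ is the categorical quotient of $\pi^{-1}(\bfV)$; more precisely, that $\cO(\bfV)=\cO(\pi^{-1}(\bfV))^\Gamma$. The idea is to test the universal property against $\bA^1$:
\begin{itemize}
\item invariant functions on $\bfZ$ correspond to morphisms $\bfY\to\bA^1$, so $\cO(\bfY)=\cO(\bfZ)^\Gamma$;
\item to localize this identity to $\bfV$, compare the sheaves $\cO_\bfY$ and $(\pi_*\cO_\bfZ)^\Gamma$.
\end{itemize}
Both are coherent $\cO_\bfY$-modules, because $\pi$ is finite. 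There is a natural map $\cO_\bfY\to(\pi_*\cO_\bfZ)^\Gamma$. Let $\bfY'$ be the relative spectrum of $(\pi_*\cO_\bfZ)^\Gamma$, which is finite over $\bfY$. The map $\pi$ factors through $\bfY'$, and the universal property gives a map $\bfY\to\bfY'$ splitting the finite map $\bfY'\to\bfY$. Since both are compatible with the maps from $\bfZ$, this forces $\bfY'\cong\bfY$. This step is the main obstacle: all the bookkeeping around the categorical quotient versus the invariant ring happens here. Once it is done, we may assume $\bfZ=\operatorname{Spec}A$ and $\bfY=\operatorname{Spec}A^\Gamma$, with $A$ finite over $A^\Gamma$.

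Second, the invariant-theory facts. After base change to $\bar F$, points become maximal ideals. Forming invariants commutes with the flat base change $-\otimes_F\bar F$, because invariants are a kernel of a map of $F$-vector spaces. So we may work over $\bar F$ with $B=A_{\bar F}$.

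\emph{Surjectivity.} The extension $B^\Gamma\subset B$ is finite, hence integral, so by lying-over every maximal ideal of $B^\Gamma$ is the contraction of some maximal ideal of $B$.

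\emph{Injectivity.} Let $\mathfrak m,\mathfrak m'$ be maximal ideals of $B$ with the same contraction, and suppose $\mathfrak m'\neq g\mathfrak m$ for every $g\in\Gamma$. By the Chinese remainder theorem (prime avoidance) choose $f\in\mathfrak m'$ with $f\notin g\mathfrak m$ for all $g$. Then $N(f)=\prod_{g\in\Gamma} g(f)$ is $\Gamma$-invariant, and it lies in $\mathfrak m'\cap B^\Gamma=\mathfrak m\cap B^\Gamma$. But $\mathfrak m$ is prime and no factor $g(f)$ lies in $\mathfrak m$, since $g(f)\in\mathfrak m$ would mean $f\in g^{-1}\mathfrak m$. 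This is a contradiction. Note that the argument uses the norm rather than a trace or averaging operator, so it works in any characteristic.
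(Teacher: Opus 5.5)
Your proof is correct and follows the same route as the paper. You reduce to the affine case, obtain surjectivity from lying over for the integral extension $B^\Gamma\subset B$, and obtain injectivity from the norm $\prod_{g\in\Gamma} g(f)$ of a function separating the two orbits; this is exactly the multiplicative trick the paper uses. Your identification $\cO_{\bfY}\cong(\pi_*\cO_{\bfZ})^\Gamma$ via the relative spectrum, and the base change to $\bar F$, make explicit what the paper compresses into ``since the map is affine, we can assume that $\bfZ$ is affine''.
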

\begin{proof}
    Since the map is affine, we can assume that $\RamiC{\bfZ}$ is affine. 
    The map $\gamma$ is onto by the going up theorem. To show that it is one-to-one it is enough to show that for every $O_1,O_2\in \RamiC{\bfZ}(\RamiC{\bar F})/\Gamma$ there exists $f\in (\RamiC{\bar F}[\RamiC{\bfZ}])^\Gamma$ such that $f|_{O_1}=0$ and $f|_{O_2}=1$.
    Let $f'\in \RamiC{\bar F}[\RamiC{\bfZ}]$ such that $f|_{O_1}=0$ and $f|_{O_2}=1$, and let $f:=\prod_{g\in \Gamma}g^*(f')$.
\end{proof}
\subsection{\Rami{Factorizabilty of \RamiC{quasi-projective} varieties and compatibility with open embeddings}}
\Rami{
In this subsection we prove that \RamiC{quasi-projective} varieties are factorizable (see \RamiC{\Cref{cor:fac}}) and the quotients of factorizable varieties are compatible with open embeddings (see  \Cref{lem:GamU}).

The only place where the positivity of characteristic presents an additional difficulty is the compatibility for the affine case. See the base of the induction in the proof of \Cref{lem:GamU0}. There we can not use the standard averaging method, and we use its multiplicative version instead.
}
\begin{prop}[{cf. \cite[Lec. 10, pp. 124-125]{Har} or \cite{Ser}}]\label{prop:aff.fac}
    Let a finite group $\Gamma$ act on an affine variety $\bfZ$. Then the action is factorizable and 
    $\bfZ//\Gamma\cong \Spec(\cO(\bfZ)^{\Gamma})$.
\end{prop}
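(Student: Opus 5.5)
Write $A:=\cO(\bfZ)$ and $B:=A^{\Gamma}$. The plan is to prove three things in order: $B$ is a finitely generated $F$-algebra, $A$ is finite over $B$, and $\bfZ\to\Spec B$ is a categorical quotient in the category of varieties (indeed of schemes).

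\emph{Finiteness.} Choose generators $a_1,\dots,a_m$ of $A$ as an $F$-algebra. For each $i$ consider the polynomial
$$P_i(t)=\prod_{g\in\Gamma}(t-g^*a_i)\in A[t].$$
Its coefficients are elementary symmetric functions in the $g^*a_i$, so they are $\Gamma$-invariant. Let $C\subset B$ be the $F$-subalgebra generated by all these coefficients. Then $C$ is finitely generated, and each $a_i$ is integral over $C$ because $P_i(a_i)=0$. Hence $A$ is a finitely generated $C$-module. This argument uses only symmetric functions, never averaging, so it works in every characteristic.

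Since $C$ is Noetherian, the submodule $B\subset A$ is also a finite $C$-module. It follows that $B$ is a finitely generated $F$-algebra and that $A$ is finite over $B$. This is Noether's argument. Moreover, $B$ is reduced because $A$ is, so $\Spec B$ is a variety, and the map $\bfZ\to\Spec B$ is finite.

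\emph{Universal property.} Let $\phi:\bfZ\to\bfY$ be a $\Gamma$-invariant morphism. We must show it factors uniquely through $\pi:\bfZ\to\Spec B$.
\begin{enumerate}
\item When $\bfY$ is affine this is immediate: $\phi^*$ maps $\cO(\bfY)$ into $A^\Gamma=B$.
\item For general $\bfY$, we need to know that $\phi$ is affine-locally controlled by $\pi$. Since $\pi$ is finite and surjective (by the going-up theorem), it is a closed map, so it carries $\Gamma$-stable open sets to open sets.
\item Cover $\bfY$ by affine opens $\bfV_j$. The sets $\phi^{-1}(\bfV_j)$ are $\Gamma$-stable opens of $\bfZ$, so their images $\bfW_j:=\pi(\phi^{-1}(\bfV_j))$ are open, and $\pi^{-1}(\bfW_j)=\phi^{-1}(\bfV_j)$ because the fibers of $\pi$ are $\Gamma$-orbits by \Cref{lem:GamBij} (first over $\bar F$, then descending).
\item Shrink each $\bfW_j$ to principal opens $D(b)$ with $b\in B$. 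Then $\pi^{-1}(D(b))=\Spec A_b$, and $(A_b)^\Gamma=B_b$ because $b$ is invariant and localizing at an invariant element commutes with taking invariants.
\item The affine case now gives a factorization over each $D(b)$. These local factorizations are unique, so they glue to a morphism $\Spec B\to\bfY$.
\end{enumerate}
Uniqueness of the global factorization holds because $\pi$ is surjective and $B\hookrightarrow A$ is injective, i.e.\ $\pi$ is schematically dominant.

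The main obstacle is the gluing step, specifically identifying $\pi^{-1}(\pi(U))$ with $U$ for a $\Gamma$-stable open $U$. This is what requires the fiber description in \Cref{lem:GamBij}. It is also the only place where I rely on localization commuting with invariants at invariant elements, which is valid in all characteristics without averaging.
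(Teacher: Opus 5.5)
Your proof is correct. The paper gives no proof of its own here; it only cites the classical references, so the comparison is with the standard argument.

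\textbf{Comparison with the standard route.} Your finiteness step is exactly Noether's symmetric-function argument. It uses no averaging, so it is characteristic-free.

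For the universal property, the classical treatment (e.g.\ Mumford's \emph{Abelian Varieties}, \S II.7, which the paper also uses) builds the quotient as a ringed space: the orbit space with the sheaf $(\pi_*\cO_{\bfZ})^{\Gamma}$. It then identifies this with $\Spec B$, and the universal property for arbitrary schemes follows formally. You use the same two inputs: the fibers of $\pi$ are $\Gamma$-orbits, so $\pi$ is a topological quotient, and $(A_b)^\Gamma=B_b$ for $b\in B$. Instead of the ringed-space packaging, you glue the affine-case factorizations over invariant principal opens. This is more hands-on and equally valid, and your uniqueness argument via schematic dominance is fine.

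\textbf{Citing \Cref{lem:GamBij}.} You should not cite that lemma as stated. It assumes factorizability, and its proof reduces to the affine case using precisely $\bfZ//\Gamma\cong\Spec(\cO(\bfZ)^\Gamma)$, which is what you are proving. What you need is its argument applied directly to $\pi$:
\begin{enumerate}
\item lying over;
\item the invariant separating function $\prod_{g}g^*f'$;
\item the identity $(A\otimes_F\bar F)^\Gamma=B\otimes_F\bar F$, which holds by flatness.
\end{enumerate}
You also owe a line for passing from $\bar F$-points to arbitrary points of $\Spec B$, which you only gesture at with ``descending''.

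\textbf{A cleaner alternative.} You can avoid both issues with the paper's own multiplicative trick from the base case of \Cref{lem:GamU0}. Let $U$ be a $\Gamma$-stable open with complement $C$, and let $x\in U$ be a closed point. Choose $f_x\in A$ vanishing on $C$ and equal to $1$ on $\Gamma x$. Then $g_x=\prod_{\gamma\in\Gamma}\gamma^*(f_x)$ lies in $B$, and it satisfies $x\in D_A(g_x)\subseteq U$ and $\pi^{-1}(D_B(g_x))=D_A(g_x)$. The sets $D_A(g_x)$ cover $U$, since they contain all its closed points. Hence $\pi(U)=\bigcup_x D_B(g_x)$ is open and $\pi^{-1}(\pi(U))=U$. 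This is exactly what your steps (2)--(3) need, and it also hands you the invariant principal opens of step (4) directly.
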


\begin{lemma}\label{lem:glue}
\Rami{
    Let a finite group $\Gamma$ act  on a  variety $\bfZ$. Let $\bfZ=\bfZ_1\cup \bfZ_2$ be a cover of $\bfZ$ by open, $\Gamma$-invariant factorizable  sets. Assume that $\bfV:=\bfZ_1\cap \bfZ_2$  is also factorizable and we have Cartesian squares:
    }    
\begin{equation}\label{lem:glue:diag1}    
\begin{tikzcd}
\mathbf{V} \arrow[d] \arrow[r] \arrow[rd, phantom, "\square"] &
\mathbf{Z}_1 \arrow[d] \\
\mathbf{V}//\Gamma \arrow[r] &
\mathbf{Z}_1//\Gamma
\end{tikzcd}
\qquad
\begin{tikzcd}
\mathbf{V} \arrow[d] \arrow[r] \arrow[rd, phantom, "\square"] &
\mathbf{Z}_2 \arrow[d] \\
\mathbf{V}//\Gamma \arrow[r] &
\mathbf{Z}_2//\Gamma
\end{tikzcd}
\end{equation}
\Rami{
with the (lower) horizontal maps being open embeddings.
Let $$\bfW:=\bfZ_1//\Gamma \sqcup_{\bfV//\Gamma } \bfZ_2//\Gamma.$$ Then the natural map $\Dima{\bfZ}\to \bfW$ is the categorical quotient map and it is finite. Moreover, we have the following Cartesian squares:}
\begin{equation}\label{lem:glue:diag2}    
\begin{tikzcd}
\mathbf{Z}_1 \arrow[d] \arrow[r] \arrow[rd, phantom, "\square"] &
\mathbf{Z} \arrow[d] \\
\mathbf{Z}_1//\Gamma \arrow[r] &
\mathbf{Z}//\Gamma
\end{tikzcd}
\qquad
\begin{tikzcd}
\mathbf{Z}_2 \arrow[d] \arrow[r] \arrow[rd, phantom, "\square"] &
\mathbf{Z} \arrow[d] \\
\mathbf{Z}_2//\Gamma \arrow[r] &
\mathbf{Z}//\Gamma
\end{tikzcd}
\end{equation}
\end{lemma}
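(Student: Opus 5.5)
First construct the quotient map $\pi:\bfZ\to\bfW$. Write $\pi_i:\bfZ_i\to\bfZ_i//\Gamma$ and $\pi_{\bfV}:\bfV\to\bfV//\Gamma$. Since $\pi_1|_{\bfV}$ factors through $\bfV//\Gamma\hookrightarrow \bfZ_1//\Gamma$ (the squares \eqref{lem:glue:diag1} commute), and similarly for $\pi_2$, the maps $\bfZ_i\to\bfZ_i//\Gamma\to\bfW$ agree on $\bfV$. Hence they glue to $\pi:\bfZ\to\bfW$. Here $\bfW$ is obtained by gluing two varieties along open subvarieties, so it is a scheme of finite type that is reduced; we do not need separatedness for the argument. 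The map $\pi$ is $\Gamma$-invariant because each $\pi_i$ is.

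Next prove that the squares \eqref{lem:glue:diag2} are Cartesian, i.e.\ $\pi^{-1}(\bfZ_i//\Gamma)=\bfZ_i$ and $\pi|_{\bfZ_i}=\pi_i$. Take $i=1$. The preimage $\pi^{-1}(\bfZ_1//\Gamma)$ contains $\bfZ_1$. Its intersection with $\bfZ_2$ is $\pi_2^{-1}\big((\bfZ_1//\Gamma)\cap(\bfZ_2//\Gamma)\big)$, and inside $\bfW$ this intersection is exactly $\bfV//\Gamma$. By the second Cartesian square in \eqref{lem:glue:diag1}, $\pi_2^{-1}(\bfV//\Gamma)=\bfV\subset \bfZ_1$. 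Therefore $\pi^{-1}(\bfZ_1//\Gamma)=\bfZ_1$ as open subschemes, and the restriction of $\pi$ to it is $\pi_1$ by construction. Finiteness is local on the target, so $\pi$ is finite because each $\pi_i$ is finite and $\{\bfZ_i//\Gamma\}$ is an open cover of $\bfW$.

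Finally check the universal property. Let $f:\bfZ\to\bfT$ be a $\Gamma$-invariant morphism. Each restriction $f|_{\bfZ_i}$ is $\Gamma$-invariant, because $\bfZ_i$ is $\Gamma$-stable. So it factors uniquely as $g_i\circ\pi_i$ with $g_i:\bfZ_i//\Gamma\to\bfT$. On $\bfV//\Gamma$ both $g_1$ and $g_2$ restrict to factorizations of $f|_{\bfV}$ through $\pi_{\bfV}$. Here we use the Cartesian squares \eqref{lem:glue:diag1} to identify $\pi_i|_{\bfV}$ with $\bfV\to\bfV//\Gamma\hookrightarrow\bfZ_i//\Gamma$. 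By the uniqueness in the universal property of $\bfV//\Gamma$, the restrictions agree, so the $g_i$ glue to $g:\bfW\to\bfT$ with $g\circ\pi=f$. Uniqueness of $g$ follows because any factorization restricts on each $\bfZ_i//\Gamma$ to a factorization through $\pi_i$, using $\pi^{-1}(\bfZ_i//\Gamma)=\bfZ_i$ from the previous step.

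The main obstacle is the bookkeeping in this last step: we must be sure the restrictions of $g_i$ are genuinely factorizations through $\bfV//\Gamma$. This is exactly where the hypothesis that the squares \eqref{lem:glue:diag1} are Cartesian, with open embeddings, is used. Without it, $\bfV//\Gamma$ might not sit inside $\bfZ_i//\Gamma$ as the image of $\bfV$. The remaining identification $\bfW=\bfZ//\Gamma$ is then immediate, and Cartesianness of \eqref{lem:glue:diag2} has already been proved.
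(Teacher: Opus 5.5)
Your proof is correct and follows essentially the same route as the paper: you glue the factorizations $g_i$ using uniqueness on $\bfV//\Gamma$, you deduce $\pi^{-1}(\bfZ_1//\Gamma)=\bfZ_1$ from the second Cartesian square in \eqref{lem:glue:diag1} exactly as the paper does, and you get finiteness locally on the target. One small correction: the universal-property step only needs the commutativity of \eqref{lem:glue:diag1} and the open embeddings, and the Cartesian hypothesis is genuinely used only in proving \eqref{lem:glue:diag2}.
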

\begin{proof}
    \Rami{
Let $\gamma:\bfZ\to\bfA$ be a $\Gamma$-invariant map to an algebraic variety. The maps $\gamma|_{\bfZ_1}$, $\gamma|_{\bfZ_2}$, and $\gamma|_{\bfV}$  factor through maps $\alpha:\bfZ_2//\Gamma\to \bfA$,   $\beta:\bfZ_1//\Gamma\to \bfA$ and $\delta:\bfV//\Gamma\to \bfA$. These maps give a factorization of $\gamma$ via a map  $\bfW\to \bfA$. 
The uniqueness of such factorization is proven 
similarly (but simpler). Thus we have proven that the natural map $\bfZ\to \bfW$ is the categorical quotient.

Let us now show that  the diagrams \eqref{lem:glue:diag2} are Cartesian.  Let $\phi_\Gamma:\bfZ\to \bfZ//\Gamma\cong\bfW$ be the categorical quotient map. 
We need to show that $\phi_\Gamma^{-1}(\bfZ_1//\Gamma)=\bfZ_1$ (and similarly for $\bfZ_2$). Let $x\in \phi^{-1}_\Gamma(\bfZ_1//\Gamma)$. If $x\in \bfZ_1$ we are done. Otherwise  $x\in \bfZ_2$. This implies that $\phi_\Gamma(x)\in \bfV//\Gamma$. Thus, by  the right Cartesian square in \eqref{lem:glue:diag1} we get that $x\in \bfV$ and we are done.

Finally, the finiteness of $\phi_\Gamma$ follows from the Cartesian squares 
\eqref{lem:glue:diag2} and the finiteness of the maps $\bfZ_i\to \bfZ_i//\Gamma$. }
\end{proof}

\begin{lemma}\label{lem:GamU0}
    Let a finite group $\Gamma$ act  on a  variety $\bfZ$. 
    Let $\bfU\sub \bfZ$ be an open $\Gamma$-invariant set. 
    \Rami{Assume that $\bfZ$ can be covered  by open affine $\Gamma$-invariant sets.}
    Then
    \Rami{
    \begin{enumerate}[(i)]
        \item \label{lem:GamU:itZ} the action of $\Gamma$ on $\bfZ$ is factorizable.
        \item\label{lem:GamU:itii} The action of $\Gamma$ on $\bfU$ is factorizable.
        \item\label{lem:GamU0:itiii}
        The following natural diagram is a Cartesian square.
\begin{equation*}
        \xymatrix{
        \bfU \ar[d] \ar[r] &\bfZ\ar[d]\\
        \bfU//\Gamma \ar[r] &\bfZ//\Gamma}
        \end{equation*}
        \item \label{lem:GamU:itiv}
        The bottom arrow in the diagram is an open embedding. 
    \end{enumerate}
    }
\end{lemma}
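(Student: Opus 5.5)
The plan is to prove everything by induction on the number $k$ of open affine $\Gamma$-invariant sets needed to cover $\bfZ$. Before starting the induction, I first settle the basic affine case, which is the one singled out in the subsection introduction.

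\emph{Affine case.} Let $\bfZ$ be affine and let $\bfU\subset\bfZ$ be a basic open set $\bfZ_f$, with $f\in\cO(\bfZ)$ not necessarily invariant. Replace $f$ by the norm
$$N(f):=\prod_{g\in\Gamma}g^*f\in\cO(\bfZ)^\Gamma.$$
This multiplicative averaging replaces the additive average $\frac1{|\Gamma|}\sum g^*f$, which is unavailable in positive characteristic. If $\bfU$ is $\Gamma$-invariant, then $\bfZ_{N(f)}=\bigcap_g g\bfZ_f=\bfZ_f$. A general $\Gamma$-invariant open affine $\bfU$ is a finite union of basic opens, and I will arrange these to be of the form $\bfZ_{N(f)}$ as follows. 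Cover the complement's defining ideal appropriately: for each $x\in\bfU$ choose $f$ vanishing on the closed set $\bfZ\smallsetminus\bfU$ with $f$ nonzero on the whole orbit $\Gamma x$, which is possible by prime avoidance / finite point separation. Then $N(f)$ is invariant, nonvanishing at $x$, and vanishes on the complement. So $\bfU$ is covered by invariant basic opens $\bfZ_h$ with $h$ invariant.

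For such $h$, localization commutes with invariants: $(\cO(\bfZ)_h)^\Gamma=(\cO(\bfZ)^\Gamma)_h$. Hence $\bfZ_h//\Gamma=(\bfZ//\Gamma)_h$, which gives (iii) and (iv) for these basic opens by \Cref{prop:aff.fac}. To pass to a general invariant open $\bfU$ in the affine case, I would cover $\bfU$ by the $\bfZ_h$, note that their pairwise intersections $\bfZ_{hh'}$ are again of this form, and glue using \Cref{lem:glue}. The hypotheses of \Cref{lem:glue} (Cartesian squares and open embeddings) hold by the computation just made, and finiteness comes from \Cref{lem:glue}. The quotient of $\bfU$ is then the union of the opens $(\bfZ//\Gamma)_h$, which is open in $\bfZ//\Gamma$, and the preimage is $\bfU$.

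\emph{Induction.} Write $\bfZ=\bfZ_1\cup\bfZ_2$, where $\bfZ_1$ is covered by $k-1$ invariant affines and $\bfZ_2$ is a single invariant affine. The intersection $\bfV=\bfZ_1\cap\bfZ_2$ is an invariant open in $\bfZ_1$ and in $\bfZ_2$. By the induction hypothesis for $\bfZ_1$ and the affine case for $\bfZ_2$, $\bfV$ is factorizable and the squares \eqref{lem:glue:diag1} are Cartesian with open embeddings. \Cref{lem:glue} then gives (i) for $\bfZ$, together with Cartesian squares \eqref{lem:glue:diag2} where $\bfZ_i//\Gamma\to\bfZ//\Gamma$ are open embeddings, since the pushout is built from them. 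For a general invariant open $\bfU\subset\bfZ$, apply the statement to $\bfU\cap\bfZ_1\subset\bfZ_1$ and to $\bfU\cap\bfZ_2\subset\bfZ_2$ (the latter by the affine case), and glue again with \Cref{lem:glue}. Cartesianness and openness are local on $\bfZ//\Gamma$, so they follow from the corresponding statements over the open cover $\{\bfZ_i//\Gamma\}$.

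I expect the main obstacle to be the affine base case with a non-basic invariant open $\bfU$: producing enough invariant functions $h$ with $\bfZ_h\subset\bfU$ covering $\bfU$. The norm trick above, combined with choosing $f$ nonzero on an entire orbit, is what I would rely on there. A further point to check is that the glued scheme is separated, i.e. a variety. I would deduce this from the finiteness of the map from $\bfZ$ together with the surjectivity given by \Cref{lem:GamBij}, using the valuative criterion.
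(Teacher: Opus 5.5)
Your proposal is correct and follows essentially the same route as the paper. Both argue by induction on the number of $\Gamma$-invariant affine opens. Both handle the affine base case with the norm $\prod_{g\in\Gamma}g^*f$ of a function vanishing on $\bfZ\smallsetminus\bfU$ and nonvanishing on an orbit, identify $\bfZ_h//\Gamma$ with $(\bfZ//\Gamma)_h$ for invariant $h$, and glue via \Cref{lem:glue}.

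Two minor remarks. The separatedness check is unnecessary under the paper's convention that a variety is just a reduced scheme of finite type. And since \Cref{lem:glue} is stated for two opens, in the affine case it is cleaner to take the union $\bigcup_h(\bfZ//\Gamma)_h$ inside $\bfZ//\Gamma$ directly, as the paper does.
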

\begin{proof}
\Rami{We prove the statement by induction on the size $N$ of the (minimal) cover of $\bfZ$ by open affine $\Gamma$-invariant sets.} 
\begin{itemize}
    \item[Base $N=1$:] 
    \Rami{\eqref{lem:GamU:itZ} follows from \Cref{prop:aff.fac}. This implies also that $\bfZ//\Gamma$ is affine.}

    Let $\bfA \subset \bfZ$ be the complement of $\bfU$.  
    For any closed point $x\in \bfU$, we can find a function $f_x\in \cO_\bfZ(\bfZ)$ s.t. $f_x(\bfA )=0$ and  $f_x(\Gamma \cdot x)=\{1\}$. 
    
    Let $$g_x=\prod_{\gamma\in\Gamma} \gamma^*(f_x).$$
    Let $\bfU_x\subset \bfZ$ be the non-vanishing locus of $g_x$.
    Note that \Dima{$g_x\in \cO_{\bfZ}(\bfZ)^\Gamma=\cO_{\bfZ//\Gamma}(\bfZ//\Gamma)$}. Let $\bfV_x\sub \bfZ//\Gamma$ be the non-vanishing locus of $g_x$ when considered as a function on  $\bfZ//\Gamma$. By \Cref{prop:aff.fac}, the action of $\Gamma$ on  $\bfU_x$ is factorizable. $\bfU_x//\Gamma\cong\bfV_x$. Let $$\bfV=\bigcup_{x\in\bfZ \text{ is closed }} \bfV_x$$
    It is easy to deduce that  $\bfV\cong \bfU//\Gamma$ and we have the required Cartesian square.
    \item[Step:] 
    \Rami{
    Write $\bfZ=\bigcup_{i=1}^N \bfZ_i$ where $\bfZ_i$ are open, affine, and $\Gamma$-invariant. Let $\bfY=\bigcup_{i=2}^N \bfZ_i$. 
    Let $\bfV:=\bfZ_1\cap \bfY$.
The previous lemma (\Cref{lem:glue}) and the induction hypothesis applied to the pairs 
 $\bfV \subset \bfZ_1$, $\bfV \subset \bfY$,
 $\bfV \cap \bfU \subset \bfZ_1 \cap \bfU$,  and $\bfV \cap \bfU \subset \bfY\cap \bfU$
imply \eqref{lem:GamU:itZ} and \eqref{lem:GamU:itii}.

We also get the following Cartesian squares:
}
\begin{equation}\label{lem:GamU0:sq1}    
\begin{tikzcd}
\bfZ_1 \arrow[d] \arrow[r] \arrow[rd, phantom, "\square"] &
\bfZ \arrow[d] \\
\bfZ_1//\Gamma \arrow[r] &
\bfZ//\Gamma
\end{tikzcd}
\qquad
\begin{tikzcd}
\bfY \arrow[d] \arrow[r] \arrow[rd, phantom, "\square"] &
\bfZ \arrow[d] \\
\bfY//\Gamma \arrow[r] &
\bfZ//\Gamma
\end{tikzcd}
\end{equation}

\begin{equation}
\label{lem:GamU0:sq2}
\begin{tikzcd}
\bfZ_1 \cap \bfU \arrow[d] \arrow[r] \arrow[rd, phantom, "\square"] &
\bfU \arrow[d] \\
(\bfZ_1\cap \bfU)//\Gamma \arrow[r] &
\bfU//\Gamma
\end{tikzcd}
\qquad
\begin{tikzcd}
\bfY \cap \bfU \arrow[d] \arrow[r] \arrow[rd, phantom, "\square"] &
\bfU \arrow[d] \\
(\bfY\cap \bfU)//\Gamma \arrow[r] &
\bfU//\Gamma
\end{tikzcd}
\end{equation}
\Rami{
with horizontal maps being open embeddings. Moreover, 
\begin{equation}\label{lem:GamU0:cov}    
\bfZ//\Gamma=\bfZ_1//\Gamma\cup \bfY//\Gamma \text{ and }
\bfU//\Gamma=(\bfZ_1 \cap \bfU)//\Gamma\cup(\bfY \cap \bfU)//\Gamma
\end{equation}
    Applying the induction hypothesis for the pairs $\bfZ_1\cap \bfU \subset \bfZ_1$ and $\bfY\cap \bfU \subset \bfY$ we obtain the following Cartesian squares:}
    \begin{equation}
\label{lem:GamU0:sq3}
\begin{tikzcd}
\bfZ_1 \cap \bfU \arrow[d] \arrow[r] \arrow[rd, phantom, "\square"] &
\bfZ_1 \arrow[d] \\
(\bfZ_1\cap \bfU)//\Gamma \arrow[r] &
\bfZ_1//\Gamma
\end{tikzcd}
\qquad
\begin{tikzcd}
\bfY \cap \bfU \arrow[d] \arrow[r] \arrow[rd, phantom, "\square"] &
\bfY \arrow[d] \\
(\bfY\cap \bfU)//\Gamma \arrow[r] &
\bfY//\Gamma
\end{tikzcd}
\end{equation}
\Rami{
with horizontal maps being open embeddings.

This together with \eqref{lem:GamU0:sq1} gives the following Cartesian squares:    
    }
        \begin{equation}
\label{lem:GamU0:sq4}
\begin{tikzcd}
\bfZ_1 \cap \bfU \arrow[d] \arrow[r] \arrow[rd, phantom, "\square"] &
\bfZ \arrow[d] \\
(\bfZ_1\cap \bfU)//\Gamma \arrow[r] &
\bfZ//\Gamma
\end{tikzcd}
\qquad
\begin{tikzcd}
\bfY \cap \bfU \arrow[d] \arrow[r] \arrow[rd, phantom, "\square"] &
\bfZ \arrow[d] \\
(\bfY\cap \bfU)//\Gamma \arrow[r] &
\bfZ//\Gamma
\end{tikzcd}
\end{equation}
\Rami{
with horizontal maps being open embeddings.

This together with \eqref{lem:GamU0:cov} proves \eqref{lem:GamU:itiv}. It remains to prove \eqref{lem:GamU0:itiii}. For this it is enough to show that $\phi^{-1}(\bfU//\Gamma)=\bfU,$ \Dima{where $\phi:\bfZ\to \bfZ//\Gamma$ denotes the quotient map.}
This follows from \eqref{lem:GamU0:sq2} and \eqref{lem:GamU0:cov}.
}   
\end{itemize}
\end{proof}
The  last Lemma gives us 2 corollaries:
\Rami{
\begin{cor}\label{lem:fac}
Let a finite group $\Gamma$ act on a variety $\bfZ$.    Then \RamiB{TFAE:}
    \begin{enumerate}[(i)]
        \item the action \RamiC{of $\Gamma$} is factorizable.
        \item $\bfZ$ can be covered  by open affine $\Gamma$-invariant sets.
    \end{enumerate}$ $
\end{cor}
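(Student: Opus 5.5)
The implication (ii)$\Rightarrow$(i) is exactly \Cref{lem:GamU0}\eqref{lem:GamU:itZ}, so the plan is to prove (i)$\Rightarrow$(ii). Assume the action is factorizable and write $\phi:\bfZ\to\bfZ//\Gamma$ for the quotient map, which is finite by definition. We cover $\bfZ//\Gamma$ by open affine subsets $\bfW_i$ and set $\bfZ_i:=\phi^{-1}(\bfW_i)$. The claim is that these give the desired cover.

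Each $\bfZ_i$ is open in $\bfZ$, and it is $\Gamma$-invariant because $\phi$ is $\Gamma$-invariant: for $g\in\Gamma$ we have $\phi\circ g=\phi$, hence $g(\phi^{-1}(\bfW_i))=\phi^{-1}(\bfW_i)$. Since $\phi$ is finite, it is in particular affine, so the preimage $\bfZ_i$ of the affine open $\bfW_i$ is affine. The sets $\bfZ_i$ cover $\bfZ$ because the $\bfW_i$ cover $\bfZ//\Gamma$.

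There is no real obstacle here. The only point to check is that finiteness of $\phi$, which is part of the definition of a factorizable action, already gives affineness of $\phi$. This holds by the definition of a finite morphism, so no further input is required.
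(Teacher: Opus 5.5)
Your proof is correct and takes the same approach as the paper. (ii)$\Rightarrow$(i) is part (i) of Lemma~\ref{lem:GamU0}. For the converse, which the paper leaves implicit, pulling back an affine open cover of $\mathbf{Z}//\Gamma$ along the finite (hence affine) and $\Gamma$-invariant quotient map is exactly the intended argument.
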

}
\begin{cor}\label{lem:GamU}
Let a finite group $\Gamma$ act factorizably on a  variety $\bfZ$. 
    Let $\bfU\sub \bfZ$ be an open $\Gamma$-invariant set.     
    Then
    \Rami{
    \begin{enumerate}[(i)]
        \item the action of $\Gamma$ on $\bfU$ is factorizable.
        \item \label{eq:csg}
        The following natural diagram is a Cartesian square.
\begin{equation*}
        \xymatrix{
        \bfU \ar[d] \ar[r] &\bfZ\ar[d]\\
        \bfU//\Gamma \ar[r] &\bfZ//\Gamma}
        \end{equation*}
        \item 
        The bottom arrow in the diagram is an open embedding. 
    \end{enumerate}
    }
\end{cor}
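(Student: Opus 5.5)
The plan is to reduce \Cref{lem:GamU} to \Cref{lem:GamU0}, using \Cref{lem:fac} to move between the hypothesis and the covering condition. The point is that \Cref{lem:GamU0} already contains all three conclusions \eqref{lem:GamU:itii}, \eqref{lem:GamU0:itiii} and \eqref{lem:GamU:itiv}. Its only hypothesis is that $\bfZ$ can be covered by open affine $\Gamma$-invariant sets. So it suffices to show that factorizability of the action on $\bfZ$ implies this covering property. That is the direction (i)$\Rightarrow$(ii) of \Cref{lem:fac}. Since \Cref{lem:fac} is itself presented as a consequence of \Cref{lem:GamU0}, I will prove this implication directly and not cite it circularly.

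To construct the cover, let $\phi:\bfZ\to\bfZ//\Gamma$ be the quotient map, which is finite by the definition of a factorizable action. Cover the variety $\bfZ//\Gamma$ by open affine subsets $\bfW_i$. Finite morphisms are affine, so each $\phi^{-1}(\bfW_i)$ is an open affine subscheme of $\bfZ$. Each $\phi^{-1}(\bfW_i)$ is also $\Gamma$-invariant, because $\phi$ is $\Gamma$-invariant: for $\gamma\in\Gamma$ we have $\phi\circ\gamma=\phi$, so $\gamma$ preserves every fiber-preimage. These sets together cover $\bfZ$, which gives (ii).

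Applying \Cref{lem:GamU0} to the pair $\bfU\subset\bfZ$ then yields the factorizability of the action on $\bfU$, the Cartesian square, and the fact that the bottom arrow is an open embedding. One compatibility check remains. \Cref{lem:GamU0} constructs its own quotient $\bfZ//\Gamma$ by gluing, while the statement refers to the given categorical quotient. The two agree up to unique isomorphism by the universal property, so the square in the statement is isomorphic to the one produced by the lemma.

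The main (mild) obstacle is justifying that $\phi^{-1}(\bfW_i)$ is affine. This uses exactly the finiteness built into the definition of factorizable, and it is the reason that hypothesis is included. Everything else is bookkeeping with universal properties.
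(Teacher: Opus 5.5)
Your proposal is correct and follows the paper, which derives this corollary directly from \Cref{lem:GamU0}. The step you prove by hand, pulling back an affine cover of $\bfZ//\Gamma$ along the finite (hence affine) $\Gamma$-invariant quotient map, is exactly the unstated direction (i)$\Rightarrow$(ii) of \Cref{lem:fac} that this reduction relies on; you may also note that the cover can be taken finite because $\bfZ//\Gamma$ is quasi-compact, which the induction on $N$ in \Cref{lem:GamU0} uses.
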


\begin{lemma} \label{lem:q.proj.open}
    Let $\bfZ$ be a quasi-projective variety, and $\bfA\subset \bfZ$ be a finite subvariety. Then there exists an open affine $\bfV \sub \bfZ$ s.t.  $\bfA\sub \bfV$.
\end{lemma}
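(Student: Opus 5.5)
The plan is the standard prime-avoidance argument in the homogeneous coordinate ring. First I reduce to the projective closure. Embed $\bfZ$ as a locally closed subvariety of some $\mathbb{P}^N$, and let $\bar\bfZ$ be its closure. Then $\bfZ$ is open in $\bar\bfZ$, so the boundary $\bfB:=\bar\bfZ\setminus\bfZ$ is closed in $\mathbb{P}^N$. Since $\bfA$ is finite, I may enlarge the base field if needed only for intuition. In fact I will work directly with the finitely many points of $\bfA$, which are closed points of $\mathbb{P}^N$ (possibly with non-rational residue fields), and with their homogeneous prime ideals $\mathfrak p_1,\dots,\mathfrak p_k\subset S:=F[x_0,\dots,x_N]$.

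The key step is to find a homogeneous polynomial $h$ of some positive degree $d$ such that two things hold. First, $h$ vanishes on $\bfB$, i.e.\ $h\in I(\bfB)$, where $I(\bfB)$ is the homogeneous ideal of $\bfB$. Second, $h\notin\mathfrak p_i$ for every $i$. Since $\bfA\cap\bfB=\emptyset$, for each $i$ the ideal $I(\bfB)$ is not contained in $\mathfrak p_i$. Also, the irrelevant ideal is not contained in any $\mathfrak p_i$. The graded prime avoidance lemma (for homogeneous ideals not generated in degree zero, with finitely many homogeneous primes) then gives a homogeneous $h\in I(\bfB)_+$ outside $\bigcup_i \mathfrak p_i$. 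This is the main obstacle, because naive prime avoidance yields a non-homogeneous element, and over a finite field $F$ one cannot simply choose a generic linear combination. I would handle this with the standard trick. For each $i$, take a homogeneous $h_i\in I(\bfB)$ with $h_i\notin\mathfrak p_i$, and raise the elements to powers so they all have a common degree $d$. Then build $h$ inductively: given $h'$ avoiding $\mathfrak p_1,\dots,\mathfrak p_{j-1}$, either $h'\notin \mathfrak p_j$, or one replaces $h'$ by $h'^{a}+g^{b}\cdot(\text{element in }\mathfrak p_1\cdots\mathfrak p_{j-1}\setminus\mathfrak p_j)$ with degrees matched. This is the usual proof of homogeneous prime avoidance and needs no infinitude of $F$.

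Finally, set $\bfV:=\bar\bfZ\cap D_+(h)$. This is a closed subvariety of the affine variety $D_+(h)\subset\mathbb{P}^N$, hence affine. It contains no point of $\bfB$, so $\bfV\subset\bfZ$, and it is open in $\bfZ$. It contains every point of $\bfA$ since $h\notin\mathfrak p_i$. This gives the required open affine $\bfV$ with $\bfA\subset\bfV$.
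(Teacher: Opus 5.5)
Your argument is correct, but it is organized differently from the paper's. The paper proceeds by a chain of reductions. For $\mathbb{P}^N$ over an infinite field it takes the complement of a hyperplane missing $\bfA$. For a finite field it passes to $\bar F$, descends the affine open to a finite Galois extension $E$, and combines the Galois conjugates to obtain an open defined over $F$. It then restricts to projective subvarieties. It treats the quasi-affine case separately, using a single regular function on an affine envelope that vanishes on the boundary and equals $1$ on $\bfA$. Finally, in the general case it first picks an affine open of the projective closure containing $\bfA$ and applies the quasi-affine case to its intersection with $\bfZ$.

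You do everything in one step in the homogeneous coordinate ring. Graded prime avoidance produces a single form $h$ of positive degree that lies in $I(\mathbf{B})$ and outside every $\mathfrak p_i$. This one form both cuts away the boundary and avoids $\bfA$. The finite-field difficulty never arises, because avoidance is achieved by the powers-and-sums trick rather than by a generic linear choice.

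Your route buys uniformity in $F$ with no descent. In particular it sidesteps the paper's Galois step, where affineness requires the intersection $\bigcap_{\alpha}\alpha(\bfV'')$ of the conjugates rather than their union. That intersection is affine by separatedness and still contains the Galois-stable set $\bfA_E$. The paper's route keeps each step elementary and geometric (a hyperplane, or one regular function), at the price of a case analysis and a descent argument.

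Two small points in your inductive step should be stated explicitly. First, an element $c\in\mathfrak p_1\cdots\mathfrak p_{j-1}\setminus\mathfrak p_j$ exists because the $\mathfrak p_i$ come from distinct closed points and are therefore pairwise incomparable. Second, the degrees can always be matched by taking $h'^{a}+(gc)^{b}$ with $a=\deg(gc)$ and $b=\deg h'$.
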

\begin{proof}
$ $
    \begin{enumerate}[{Case} 1.]
        \item\label{lem:q.proj.open:ca1} $\bfZ$ is the  projective space  and $F$ is infinite.\\
        \Rami{In this case one can take $\bfV$ to be a compliment to a hyperplane that does not intersect $\bfA$.
        }
        \item $\bfZ$ is the  projective space.\footnote{\Rami{In fact, an  accurate repetition of Case \ref{lem:q.proj.open:ca1} in the language of schemes will give a proof for this case, however we prefer the more geometric approach below.}}\\
        \Rami{By the previous case we may assume that $F$ is finite (and hence perfect). From the previous case we have \Dima{an open affine subset $\bfV'\subset \bfZ_{\bar F}$ that includes $\bfA_{\bar F}$}. We can find a finite (Galois) extension $E/F$ s.t. there exists $\bfV''\subset \bfZ_{E}$ satisfying $\bfV''_{\bar F}=\bfV'$. Now, we can find $\bfV$ s.t. $\bfV_{E}=\bigcup_{\alpha\in Gal(E/F)} \alpha(\bfV'')$. It is easy to see that $\bfV$ satisfies the requirements.
        }
        
        
        \item \label{q.proj.open:3}$\bfZ$ is a  projective variety.\\
        Follows from the previous case.
        \item $\bfZ$ is a  quasi-affine variety.\\
        Embed $\bfZ$ as an open subset of an affine variety $\bfZ'$. Let $\bfW$ be the complement to $\bfZ$ in $\bfZ'$. 
        Let $f\in \cO_{\bfZ'}(\bfZ')$ such that $f|_\bfA= 1$ and $f|_\bfW= 0$. Take $\bfV$ to be $\bfZ'_f$. 
        \item The general case.\\
        \Rami{
        Embed $\bfZ$ into  a projective variety $\bfZ'$ as an open dense subset. By Case \ref{q.proj.open:3} we can find  an open affine subset $\bfV' \subset \bfZ'$   satisfying $\bfA \subset \bfV'$. 
        Note that $\bfV'\cap \bfZ$ is quasi-affine. The assertion follows now from the previous case.
        }
    \end{enumerate}
\end{proof}
\Rami{
\begin{cor}
 Let a finite group $\Gamma$ act  on a quasi-projective variety $\bfZ$. Then $\bfZ$ can be covered by $\Gamma$-invariant open affine subsets.
\end{cor}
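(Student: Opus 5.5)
The idea is to apply \Cref{lem:q.proj.open} to $\Gamma$-orbits of points, and then cut out a $\Gamma$-invariant affine open from the resulting affine neighborhood by intersecting its translates.

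First I would reduce to a pointwise statement. It suffices to show that every closed point $x\in \bfZ$ lies in some $\Gamma$-invariant open affine subset. Indeed, the union of these sets covers all closed points. Since $\bfZ$ is of finite type over $F$, an open set containing all closed points is everything.

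Fix a closed point $x$. Let $\bfA:=\Gamma\cdot x$, which is a finite set of closed points; view it as a finite reduced subvariety. By \Cref{lem:q.proj.open} there is an open affine $\bfV'\sub\bfZ$ with $\bfA\sub\bfV'$. Set
$$\bfV'':=\bigcap_{g\in\Gamma} g(\bfV').$$
This is open, $\Gamma$-invariant, and contains $\bfA$, since each $g(\bfV')\supset g(\bfA)=\bfA$. It is also affine: a quasi-projective variety is separated, and in a separated scheme a finite intersection of open affines is affine, because $\bfV_1\cap\bfV_2\cong (\bfV_1\times\bfV_2)\times_{\bfZ\times\bfZ}\Delta_{\bfZ}$ is a closed subscheme of the affine scheme $\bfV_1\times\bfV_2$. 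So $\bfV''$ is the required $\Gamma$-invariant open affine neighborhood of $x$.

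There is no real obstacle here; the heavy lifting is done by \Cref{lem:q.proj.open}. The only points to be careful about are the separatedness argument for intersections of affines, and the reduction from closed points to the whole space. The latter is the standard fact that closed points are dense in every closed subset of a variety: the complement of the union would be a closed subset without closed points, hence empty. Combined with \Cref{lem:fac}, this then yields that every finite group action on a quasi-projective variety is factorizable (\Cref{cor:fac}).
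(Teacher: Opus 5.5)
Your proposal is correct and matches the argument the paper leaves implicit (the corollary is stated without proof right after \Cref{lem:q.proj.open}): apply that lemma to the finite orbit $\Gamma\cdot x$, then intersect the $\Gamma$-translates of the resulting affine open, which stays affine because $\bfZ$ is separated. Your remarks on reducing to closed points and on intersections of affines in a separated scheme correctly fill in the details the paper omits.
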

\RamiC{in view of \Cref{lem:fac}, this gives:}
\begin{cor}\label{cor:fac}
 An action of a finite group on a quasi-projective variety is factorizable. 
\end{cor}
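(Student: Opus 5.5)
The statement follows directly from the two previous results. By \Cref{lem:fac}, an action of a finite group $\Gamma$ on a variety is factorizable exactly when the variety can be covered by open affine $\Gamma$-invariant sets. The preceding corollary asserts that such a cover exists whenever $\bfZ$ is quasi-projective. So the real content is that corollary, and I will argue it from \Cref{lem:q.proj.open}.

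To build the cover, fix a closed point $x\in\bfZ$ and let $\bfA:=\Gamma\cdot x$. Since $\Gamma$ is finite, this orbit is a finite subvariety. \Cref{lem:q.proj.open} provides an open affine $\bfV\sub\bfZ$ with $\bfA\sub\bfV$. Now set
$$\bfV_x:=\bigcap_{g\in\Gamma} g(\bfV).$$
This set is open and $\Gamma$-invariant, and it contains $x$, because every $g^{-1}x$ lies in $\bfA\sub\bfV$.

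It remains to check that $\bfV_x$ is affine. Each $g(\bfV)$ is affine, and a quasi-projective variety is separated. In a separated scheme, a finite intersection of open affines is affine, since it is the preimage of the diagonal under the product of the affine inclusions. Hence $\bfV_x$ is affine.

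As $x$ ranges over the closed points, the sets $\bfV_x$ cover $\bfZ$: every nonempty closed subset of a variety contains a closed point, so the complement of their union must be empty. Applying \Cref{lem:fac} then gives factorizability.

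The only point that needs care is that \Cref{lem:q.proj.open} applies to a finite orbit over a possibly non-algebraically-closed $F$. That issue was already handled in that lemma through the Galois descent step, so no further obstacle arises here.
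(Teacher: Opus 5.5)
Your proposal is correct and follows the paper's route: the paper deduces this corollary by combining Corollary~\ref{lem:fac} with the preceding corollary that a quasi-projective variety with a finite group action can be covered by $\Gamma$-invariant open affines, and that covering corollary rests on Lemma~\ref{lem:q.proj.open}. The only difference is that the paper gives no argument for the covering corollary, while you supply one: you intersect the $\Gamma$-translates of an affine neighborhood of an orbit and use separatedness to see that the intersection is affine.
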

}

\subsection{\Rami{Quotients by free actions}}
\begin{lemma}\label{lem:et.des.cols}
    Let $\phi:\bfX\to \bfY$ be a morphism of algebraic varieties. Assume that $\phi_{\RamiC{\bar F}}:\bfX_{\RamiC{\bar F}}\to \bfY_{\RamiC{\bar F}}$ is {\'e}tale. Then so is $\phi$. 
\end{lemma}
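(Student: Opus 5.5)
We reduce to fpqc descent of the two properties that together make up étaleness, namely flatness and unramifiedness. We use that $\phi$ is of finite type, since $\bfX$ and $\bfY$ are of finite type over $F$, and that $\bar F/F$ is faithfully flat. Write $\pi_\bfX:\bfX_{\bar F}\to \bfX$ and $\pi_\bfY:\bfY_{\bar F}\to\bfY$ for the projections. They are surjective, and $\phi_{\bar F}=\phi\times_{\Spec F}\Spec \bar F$ is the base change of $\phi$ along $\pi_\bfY$.

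\emph{Flatness.} Let $x\in\bfX$ and $y=\phi(x)$. Pick $x'\in\bfX_{\bar F}$ over $x$ and set $y'=\phi_{\bar F}(x')$, which lies over $y$. The local ring map $\cO_{\bfY,y}\to\cO_{\bfY_{\bar F},y'}$ is flat and local, hence faithfully flat. By hypothesis $\cO_{\bfX_{\bar F},x'}$ is flat over $\cO_{\bfY_{\bar F},y'}$, so it is flat over $\cO_{\bfY,y}$. It is also flat over $\cO_{\bfX,x}$, and faithfully so since the map is local. Now take an injection $M\to N$ of $\cO_{\bfY,y}$-modules. After applying $\otimes_{\cO_{\bfY,y}}\cO_{\bfX,x}\otimes_{\cO_{\bfX,x}}\cO_{\bfX_{\bar F},x'}=\otimes_{\cO_{\bfY,y}}\cO_{\bfX_{\bar F},x'}$ it stays injective. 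Faithful flatness of $\cO_{\bfX,x}\to\cO_{\bfX_{\bar F},x'}$ then gives injectivity of $M\otimes\cO_{\bfX,x}\to N\otimes\cO_{\bfX,x}$. So $\phi$ is flat at $x$. (Equivalently, one can cite the standard fact that flatness descends along faithfully flat base change, EGA IV 2.5.1.)

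\emph{Unramifiedness.} Since $\phi$ is of finite type, being unramified is equivalent to $\Omega_{\bfX/\bfY}=0$. Formation of relative differentials commutes with base change, so $\Omega_{\bfX_{\bar F}/\bfY_{\bar F}}\cong\pi_\bfX^*\Omega_{\bfX/\bfY}$. The left side vanishes because $\phi_{\bar F}$ is étale. Since $\pi_\bfX$ is faithfully flat, in particular surjective, and $\Omega_{\bfX/\bfY}$ is coherent, $\pi_\bfX^*\Omega_{\bfX/\bfY}=0$ forces $\Omega_{\bfX/\bfY}=0$: by Nakayama, nonvanishing of the fiber at $x$ gives nonvanishing of the fiber at any $x'$ over $x$. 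Hence $\phi$ is unramified, and as it is flat and locally of finite presentation, it is étale.

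The only delicate point is the flatness descent when $\bar F/F$ is not finite, for instance when $F$ is imperfect. Since we work with local rings and only use flatness of $\bar F$ over $F$, no finiteness of the extension is needed. Reducedness of the varieties plays no role.
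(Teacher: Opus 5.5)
Your proof is correct, but it takes a different route from the paper.

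\textbf{The paper's route.} After reducing to the affine case, the paper first shows that $\phi_E$ is already étale for some \emph{finite} extension $E/F$.
\begin{itemize}
\item Flatness of $\phi_E$ is asserted to be ``easy to see''.
\item For unramifiedness it uses the kernel $I$ of the multiplication map $\cO(\mathbf X)\otimes_{\cO(\mathbf Y)}\cO(\mathbf X)\to\cO(\mathbf X)$. Since $I$ is finitely generated, the identity $I_{\bar F}=I_{\bar F}^2$ is witnessed by finitely many scalars, and these lie in a finite extension $E$.
\end{itemize}
It then descends étaleness along the finite surjective map $\mathbf Y_E\to\mathbf Y$ by citing a descent theorem for étale morphisms from the Stacks Project.

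\textbf{Your route.} You skip the finite extension and descend the two ingredients of étaleness directly along the faithfully flat map $\mathbf Y_{\bar F}\to\mathbf Y$.
\begin{itemize}
\item Flatness: your local-ring argument is exactly the content of the paper's ``easy to see'' claim, written out.
\item Unramifiedness: you use base change of $\Omega_{\mathbf X/\mathbf Y}$ together with Nakayama. Since $\Omega_{\mathbf X/\mathbf Y}\cong I/I^2$, your vanishing criterion and the paper's condition $I=I^2$ are the same invariant.
\end{itemize}

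\textbf{Comparison.} The paper passes through a finite extension only so that it can invoke a descent result stated for finite (integral) surjective morphisms. Your argument shows this is unnecessary, because flatness and the vanishing of a coherent sheaf both descend along any faithfully flat base change, so infinite $\bar F/F$ causes no trouble. You also avoid the extra step implicit in the paper: after descending an étale object, one must still identify it with $\mathbf X$. Your proof is therefore more self-contained and elementary.

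One small remark: in the flatness step you only need $\cO_{\mathbf Y,y}\to\cO_{\mathbf Y_{\bar F},y'}$ to be flat. Faithful flatness is used only for $\cO_{\mathbf X,x}\to\cO_{\mathbf X_{\bar F},x'}$.
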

\begin{proof}
Without loss of generality we may assume that $\bfX$ and $\bfY$ are affine. 
\begin{enumerate}[Step 1.]
    \item There exists a finite extension $E/F$ such that $\phi_{E}:\bfX_{E}\to \bfY_{E}$ is {\'e}tale.\\
    It is easy to see that for any $E$, $\phi_E$ is flat.
    For any $E/F$ let 
    $$I_E:=\ker(\cO_\bfX(\bfX)\otimes_{\cO_\bfY(\bfY)} \cO_\bfX(\bfX)\to \cO_\bfX(\bfX)).$$ 
    The fact that  $\phi_{E}$ is unramified is equivalent to the fact that $I_E=I_E^2$. The assertion follows now from the fact that $I_F$ is finitely generated (as guaranteed by the Hilbert basis theorem).
    \item $\phi$ is {\'e}tale.\\ Let $E$ be as in the previous step. It is easy to see that the natural map $\bfY_E\to \bfY$ is  finite (and hence integral). The assertion follows now from  descent for {\'e}tale morphisms (see  \cite[Proposition 41.20.6]{SP}).
\end{enumerate}  
\end{proof}
\begin{lemma}\label{lem:etal.bij}
    Let $\phi:\bfZ_1\to \bfZ_2$ be an {\'e}tale map s.t. 
    $\phi(\bar F):\bfZ_1(\bar F)\to \bfZ_2(\bar F)$ is a bijection. Then $\phi$ is an isomorphism.
\end{lemma}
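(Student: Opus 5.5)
The plan is to show that $\phi$ is étale, separated (after a reduction), quasi-finite and universally injective, and then use that an étale universally injective morphism is an open immersion; surjectivity then makes it an isomorphism. Here $\bfZ_1,\bfZ_2$ are varieties, so $\phi$ is of finite type.

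First, $\phi$ is universally injective. Points of a variety over a field have residue fields finite over $F$, so injectivity and surjectivity on $\bar F$-points translate into statements about all points. For a point $y\in\bfZ_2$, the fiber $\phi^{-1}(y)$ is étale over $\kappa(y)$, hence a finite disjoint union of $\Spec$ of finite separable extensions of $\kappa(y)$. Bijectivity on $\bar F$-points above a closed point $y$ means that $\phi^{-1}(y)\otimes_{\kappa(y)}\bar F$ consists of exactly one point for each $F$-embedding $\kappa(y)\to\bar F$. Counting embeddings, this forces the fiber to be a single point $\Spec\kappa(y)$. Thus over every closed point $\phi$ is bijective with trivial residue field extension.

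Second, pass from closed points to an isomorphism. At closed points $x\mapsto y$, the map $\phi$ is étale and $\kappa(x)=\kappa(y)$. Hence $\hat\cO_{\bfZ_2,y}\to\hat\cO_{\bfZ_1,x}$ is an isomorphism, so $\phi$ is a local isomorphism in the étale sense, and fiberwise injective at closed points. Closed points are dense in every closed subset of a variety, and quasi-finite étale maps are open. From this I would deduce that $\phi$ is a surjective open map which is injective on closed points. I would then verify injectivity on all points: if two points map to the same point $y$, their closures map onto $\overline{\{y\}}$ by openness and dimension considerations. Generic-fiber degree counts should then produce two distinct closed points over a single closed point, which is a contradiction. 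Alternatively, apply the closed-point count to the fiber over $y$ after a finite-type spreading out.

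Third, conclude with the standard result that an étale, universally injective morphism is an open immersion (see \cite{SP}, e.g. the lemma that étale $+$ universally injective implies open immersion). Surjectivity then gives an isomorphism. The main obstacle is the rigorous passage from $\bar F$-points to all scheme points and to universal injectivity. I would first try to prove universal injectivity directly: show that the diagonal $\bfZ_1\to\bfZ_1\times_{\bfZ_2}\bfZ_1$, which is an open immersion since $\phi$ is étale (unramified), is surjective. Its complement is open, hence a variety, and it has no $\bar F$-points by bijectivity. Since a nonempty variety has $\bar F$-points, the complement is empty. This last argument seems cleanest: the diagonal is then an isomorphism, so $\phi$ is a monomorphism, hence an étale monomorphism, hence an open immersion, and it is surjective.
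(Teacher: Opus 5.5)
Your final (diagonal) argument is correct and takes a different route from the paper's proof.

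\textbf{The paper's route} is local. It uses that an étale map is Zariski-locally standard étale. It asserts that a standard étale map which is injective on $\bar F$-points is an open embedding. It then glues the local inverses using bijectivity on $\bar F$-points.

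\textbf{Your route} is global, via the diagonal:
\begin{enumerate}
\item $\Delta\colon\bfZ_1\to\bfZ_1\times_{\bfZ_2}\bfZ_1$ is an open immersion because $\phi$ is unramified.
\item Its image is all of $\bfZ_1\times_{\bfZ_2}\bfZ_1$. A nonempty complement would contain a closed point of this finite-type $F$-scheme, hence an $\bar F$-point $(a,b)$ with $a\neq b$ and $\phi(a)=\phi(b)$. That contradicts injectivity.
\item So $\Delta$ is an isomorphism and $\phi$ is a monomorphism. An étale monomorphism is an open immersion (EGA IV 17.9.1, or the corresponding lemma in \cite{SP}).
\item Surjectivity on $\bar F$-points then forces the open image to be everything.
\end{enumerate}
You only assert this last step; spell it out with the same closed-point argument applied to the complement of $\phi(\bfZ_1)$.

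\textbf{A small correction.} The complement of $\Delta(\bfZ_1)$ is closed, not open, since you do not yet know $\phi$ is separated. Closedness is all you need for it to contain a closed point, so the argument is unaffected.

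\textbf{What each route buys.} Yours makes the passage from $\bar F$-points to genuine scheme-theoretic injectivity explicit and done in one step. The paper's Step 1 (``follows immediately from the definition'') leaves exactly that passage implicit, even though it needs comparable input. The paper's route, in exchange, avoids quoting the theorem on flat monomorphisms.

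\textbf{Trim the draft.} Your first two paragraphs (embedding counts, completions, generic-fiber degree counts) become unnecessary once the diagonal argument is in place. The second of them is too vague to stand alone, so drop both.
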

\begin{proof}
    \begin{enumerate}[Step 1.]
        \item Let $\psi:\bfZ_1\to \bfZ_2$ be a standard {\'e}tale map (see \cite[\href{https://stacks.math.columbia.edu/tag/00UB}{Definition 00UB}]{SP})  s.t. 
    $\psi(\bar F):\bfZ_1(\bar F)\to \bfZ_2(\bar F)$ is a injection. Then $\psi$ is an open embedding.\\
    Follows immediately from the definition.
    \item $\phi$ is an isomorphism.\\
    By the previous step we have an open cover $\bfZ_1=\bigcup \bfU_i$ s.t. $\phi|_{\bfU_i}$ is an open embedding. Since $\phi(\bar F)$ is a bijection, we obtain that $$\bfZ_2=\bigcup\phi( \bfU_i).$$ Now we can define $\phi^{-1}$ on each $\phi(\bfU_i)$, and the compatibility follows from the fact that $\phi(\bar F)$ is a bijection.
    \end{enumerate}
\end{proof}

\begin{lemma}\label{lem:Gam}
    Let a finite group $\Gamma$ act  factorizably on a 
    variety 
    $\bfZ$. 
    \Rami{Assume that the action of $\Gamma$ on $\bfZ$ is free (i.e. the action of $\Gamma$ on $\bfZ(\bar F)$ is free)}.    
    Then
    \begin{enumerate}[(i)]
        \item \label{it:etal} The map $\bfZ\to \bfZ//\Gamma$ is {\'e}tale. 
        \item The natural morphism $m:\bfZ\times \Gamma \to \bfZ\times_{\bfZ//\Gamma}\bfZ$ is an isomorphism. 
    \end{enumerate}
\end{lemma}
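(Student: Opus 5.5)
The plan is to reduce to the affine case and to $\bar F$, and then use the standard description of a free finite quotient via invariants and the trace/norm. By \Cref{lem:fac} and \Cref{lem:GamU}, $\bfZ$ is covered by $\Gamma$-invariant open affines $\bfZ_i$. The squares $\bfZ_i\to \bfZ$ over $\bfZ_i//\Gamma\to\bfZ//\Gamma$ are Cartesian with open embeddings at the bottom. Both assertions are local on $\bfZ//\Gamma$, so we may assume $\bfZ=\Spec A$ is affine and $\bfZ//\Gamma=\Spec A^\Gamma$ (\Cref{prop:aff.fac}). By \Cref{lem:et.des.cols}, étaleness can be checked after base change to $\bar F$. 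The formation of $A^\Gamma$ commutes with the flat base change $F\to\bar F$, since invariants are a kernel of a map of $F$-vector spaces. For (ii), being an isomorphism also descends along the faithfully flat map $F\to \bar F$. Hence we may assume $F=\bar F$.

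For (i), I would first show that $\phi:\bfZ\to\bfZ//\Gamma$ is étale at each closed point $z$. Let $y=\phi(z)$. By \Cref{lem:GamBij} the fiber over $y$ is set-theoretically the orbit $\Gamma z$, which has $|\Gamma|$ distinct points because the action is free. Since $A$ is finite over $A^\Gamma$, I would pass to the completion (or strict henselization) $\hat B$ of $B:=A^\Gamma$ at $y$. Then $A\otimes_B \hat B\cong \prod_{g\in\Gamma} \hat A_{gz}$, and $\Gamma$ permutes the factors simply transitively. Taking invariants commutes with the flat base change $B\to\hat B$, so
$$\hat B \cong (A\otimes_B\hat B)^\Gamma\cong \hat A_z,$$
where the last isomorphism is projection onto one factor. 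This is the classical computation: the invariants of an induced module are the module itself. So $\hat B\to\hat A_z$ is an isomorphism, which means $\phi$ is étale at $z$. As the étale locus is open and contains all closed points, $\phi$ is étale.

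For (ii), I would show that $m$ is an étale map which is bijective on $\bar F$-points, and then apply \Cref{lem:etal.bij}. Both projections of $\bfZ\times_{\bfZ//\Gamma}\bfZ$ to $\bfZ$ are étale by (i), since étaleness is stable under base change. The composite of $m$ with the first projection is the projection $\bfZ\times\Gamma\to\bfZ$, which is étale. A morphism between schemes étale over $\bfZ$ is étale, so $m$ is étale. On $\bar F$-points, $m(z,g)=(z,gz)$. By \Cref{lem:GamBij}, the pairs $(z,z')$ with $\phi(z)=\phi(z')$ are exactly those with $z'\in\Gamma z$, so $m$ is surjective; freeness makes it injective. \Cref{lem:etal.bij} then gives that $m$ is an isomorphism. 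Note that the fiber product is taken in schemes and could a priori be nonreduced, but it is étale over $\bfZ$, hence reduced, so \Cref{lem:etal.bij} applies.

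The main obstacle is the local computation in (i), specifically justifying $A\otimes_B\hat B\cong\prod_g\hat A_{gz}$ with $\Gamma$ permuting the factors. This holds because $A\otimes_B\hat B$ is finite over the complete local ring $\hat B$, so it splits as the product of its localizations at the maximal ideals over $y$, and these are exactly the orbit points. We also need invariants to commute with completion, which follows from the flatness of completion. No averaging (division by $|\Gamma|$) is used, so the argument is characteristic-free.
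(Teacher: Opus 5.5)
Your proposal is correct and follows essentially the paper's route. For (i), the paper cites Mumford \S II.7, whose proof is precisely your completion argument $\hat B\cong (A\otimes_B\hat B)^\Gamma\cong \hat A_z$, combined with the same descent from $\bar F$ via Lemma~\ref{lem:et.des.cols}. For (ii), the paper likewise shows that $m$ is {\'e}tale and bijective on $\bar F$-points and then applies Lemma~\ref{lem:etal.bij}; the only minor difference is that it gets {\'e}taleness of $m$ from the diagonal of an {\'e}tale map, whereas you use that a morphism between {\'e}tale $\bfZ$-schemes is {\'e}tale.
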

\begin{proof}
$ $
\begin{enumerate}[(i)]
    \item Follows from \cite[\S II.7]{Mum74} and \Cref{lem:et.des.cols}.
\item $\,$
\begin{enumerate}[{Step} 1.]
    \item The map $m$ it is {\'e}tale.\\
        It is enough to show that $m|_{\bfZ\times \{1\}}$ is {\'e}tale. This map is the diagonal map $\Delta: \bfZ \to \bfZ\times_{\bfZ//\Gamma}\bfZ$, which is {\'e}tale by \eqref{it:etal} \Rami{(as the diagonal of an {\'e}tale map is {\'e}tale -- \cite[\href{https://stacks.math.columbia.edu/tag/02GE}{Lemma 02GE}]{SP})}.        
        \item $m$ induces a bijection on the $\RamiC{\bar F}$-points. \\
        Follows from \Cref{lem:GamBij} since the action of $\Gamma$ is free.
        \item $m$ is an isomorphism.\\
        Follows from the previous steps using \Cref{lem:etal.bij}.
        \end{enumerate}
\end{enumerate}
\end{proof}

\begin{cor}[Galois descent for free actions]\label{cor:GalDes}
    In the setting of the previous lemma, let 
      $Sch_{\bfZ//\Gamma}$ denote the category of schemes over ${\bfZ//\Gamma}$ and $Sch^{\Gamma}_{\bfZ}$ denote the category of schemes over $\bfZ$ equipped with an action of $\Gamma$ which is compatible with the action of $\Gamma$ on $\bfZ$. Consider the functor 
    $\cF:Sch_{\bfZ//\Gamma}\to Sch^{\Gamma}_{\bfZ}$ defined by $\cF(\bfX)=\bfX \times_{\bfZ//\Gamma} \bfZ,$ with $\Gamma$ acting on the second coordinate.  Let $\beta:\cF(\bfX)\to \bfX$ be the projection on the first coordinate.  Then 
    \begin{enumerate}[(i)]
        \item \label{it:ff} $\cF$ is fully faithful. 
        \item \label{it:sh} Given $\bfX \in Sch_{\bfZ//\Gamma}$ and a sheaf 
        $\cV$ on it,
        \RamiB{the pullback $\cV(\bfX)\to (\beta^*\cV)({\cF}(\bfX))$ with respect to
        $\beta$} gives
        an isomorphism 
        $$\cV(\bfX)\cong (\beta^*\cV)(\Dima{\cF}(\bfX))^{\Gamma}.$$
    \end{enumerate}  
\end{cor}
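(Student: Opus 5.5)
The plan is to reduce both statements to the isomorphism $m:\bfZ\times\Gamma\cong\bfZ\times_{\bfZ//\Gamma}\bfZ$ of \Cref{lem:Gam}, which says that $\bfZ\to\bfZ//\Gamma$ is a $\Gamma$-torsor, and then to apply faithfully flat descent. By \Cref{lem:Gam}\eqref{it:etal} the quotient map $\pi:\bfZ\to\bfZ//\Gamma$ is étale. It is finite because the action is factorizable, and it is surjective by \Cref{lem:GamBij}. Hence $\pi$ is an fppf (indeed étale) covering.

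For \eqref{it:ff}, a $\Gamma$-equivariant morphism $f:\cF(\bfX)\to\cF(\bfX')$ over $\bfZ$ is the same as a morphism over $\bfZ$ together with a descent datum compatibility. Concretely, the descent datum on $\cF(\bfX)$ relative to $\pi$ lives over
$$\bfZ\times_{\bfZ//\Gamma}\bfZ\cong\bfZ\times\Gamma,$$
and under this identification the two pullbacks to $\bfZ\times\Gamma$ are $f$ and $\gamma^*f$. Commuting with the canonical descent data is therefore exactly $\Gamma$-equivariance. The triple product $\bfZ\times_{\bfZ//\Gamma}\bfZ\times_{\bfZ//\Gamma}\bfZ\cong\bfZ\times\Gamma\times\Gamma$ (iterating $m$) handles the cocycle bookkeeping. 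Descent of morphisms along fpqc coverings (representable functors are fpqc sheaves) then shows that $\Gamma$-equivariant maps $\cF(\bfX)\to\cF(\bfX')$ over $\bfZ$ correspond bijectively to maps $\bfX\to\bfX'$ over $\bfZ//\Gamma$. This gives full faithfulness.

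For \eqref{it:sh}, let $\bfX':=\cF(\bfX)\to\bfX$. It is the base change of $\pi$, so it is again a finite étale covering, and by base change of $m$ we get $\bfX'\times_\bfX\bfX'\cong\bfX'\times\Gamma$. I interpret ``sheaf'' as a quasi-coherent sheaf, or more generally any sheaf for the étale (fpqc) topology, with $\beta^*$ the corresponding pullback. The sheaf condition for the covering $\bfX'\to\bfX$ gives an equalizer
$$\cV(\bfX)\to(\beta^*\cV)(\bfX')\rightrightarrows(\beta^*\cV)(\bfX'\times_\bfX\bfX')=\prod_{\gamma\in\Gamma}(\beta^*\cV)(\bfX').$$
The two arrows are $s\mapsto(s)_\gamma$ and $s\mapsto(\gamma^*s)_\gamma$, so the equalizer is exactly the $\Gamma$-invariants. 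For quasi-coherent $\cV$ this is standard flat descent, namely exactness of the Amitsur complex for the faithfully flat map $\cO_\bfX\to\beta_*\cO_{\bfX'}$.

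The main obstacle is bookkeeping rather than substance: one has to check that the identification $m$ transports the two projections $\bfX'\times_\bfX\bfX'\rightrightarrows\bfX'$ to the identity and the $\gamma$-translate, so that ``agreeing on the overlap'' really means $\Gamma$-invariance. I would verify this directly from the definition $m(z,\gamma)=(z,\gamma z)$ and its compatibility with base change along $\bfX\to\bfZ//\Gamma$.
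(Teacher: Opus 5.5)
Your proposal is correct and follows essentially the paper's route: the quotient map is a surjective étale cover, and its base change $\bfX'\to\bfX$ satisfies $\bfX'\times_{\bfX}\bfX'\cong\bfX'\times\Gamma$ via the torsor isomorphism $m$. From there, (i) is faithfully flat descent for morphisms and (ii) is the fact that (quasi-)coherent Zariski sheaves are étale sheaves. You also make explicit what the paper leaves implicit: descent data correspond to $\Gamma$-equivariance, and the equalizers are exactly the $\Gamma$-invariants.
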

\begin{proof}
$\,$
\begin{enumerate}[(i)]
        \item  \Rami{Let $\bfX_1, \bfX_2\in Sch_{\bfZ//\Gamma}$. We need to show that $\cF$ induces a bijection  $$Mor_{Sch_\bfZ^\Gamma}(\cF(\bfX_1),\cF(\bfX_2))\to Mor_{Sch_{\bfZ//\Gamma}}(\bfX_1,\bfX_2).$$ The previous lemma implies that:
        \begin{equation}\label{cor:GalDes:eq1}
        \text{the maps } \cF(\bfX_i)\to \bfX_i\text{ are {\'e}tale (and surjective)}    
        \end{equation}
        \begin{equation}\label{cor:GalDes:eq2}
        \text{the natural maps } \cF(\bfX_i)\times \Gamma\to \cF(\bfX_i)\times_{\Dima{\bfZ//\Gamma}} \cF(\bfX_i)\text{ are isomorphisms.} \end{equation}
The assertion 
        follows now from} 
        faithfully flat descent for morphisms, see {\it e.g.} \cite[Lecture 9, Theorem 1.1]{Tsimmer}.
        \item  Follows from (\ref{cor:GalDes:eq1},\ref{cor:GalDes:eq2}), using  the fact that a coherent sheaf in the Zariski topology is also a sheaf in the {\'e}tale topology, see {\it e.g.}    \cite[\href{https://stacks.math.columbia.edu/tag/03DT}{Lemma 03DT}]{SP}. 
    \end{enumerate}
\end{proof}

\section{Proof of \Cref{thm:int}}\label{sec:Y}

We will use the following \Rami{straightforward} criterion for sharp integrability:
\begin{lem}\label{lem:crit.int}
    Let $\phi:\tilde \bfZ\to \bfZ$ be a modification. Assume that 
    \begin{enumerate}[(a)]
        \item $\bfZ^{sm}$ is big in $\bfZ$,
        \item $\bfZ^{sm}$  admits an invertible top form $\omega$, and
        \item $\phi^*(\omega)$ can be extended to an invertible top form on $\tilde \bfZ^{sm}$.
    \end{enumerate}
    Then $\phi$ is sharply integrable. 
\end{lem}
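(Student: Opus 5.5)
The plan is to reduce everything to the single invertible form $\omega$ of (b), using Hartogs-type extension on the normal variety $\tilde\bfZ^{sm}$ and on the big open set $\bfZ^{sm}$. Let $\bfU\sub\bfZ$ be open and let $\eta$ be a top form on $\bfU^{sm}=\bfU\cap\bfZ^{sm}$. Since $\omega$ is invertible on $\bfZ^{sm}$, we can write $\eta=f\omega$ with $f$ a regular function on $\bfU^{sm}$. We will first arrange that $f$ extends regularly to all of $\bfU$, and then pull back.

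The extension of $f$ is where normality of $\bfZ$ would be needed, and $\bfZ$ is not assumed normal. Instead I would work upstairs. On $\tilde\bfU:=\phi^{-1}(\bfU)$, condition (c) gives an invertible top form $\tilde\omega$ on $\tilde\bfZ^{sm}$ extending $\phi^*\omega$. Then
\[
\phi^*\eta=(\phi^*f)\,\tilde\omega \quad\text{on } \phi^{-1}(\bfU^{sm})\cap\tilde\bfZ^{sm}.
\]
So $\phi^*\eta$ is regular on $\tilde\bfU^{sm}$ exactly when the rational function $\phi^*f$ is regular on $\tilde\bfU^{sm}$. Moreover $\phi^*\eta$ vanishes exactly where $\phi^*f$ vanishes.

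The key step is to show that $\phi^*f$ is regular on $\tilde\bfU^{sm}$. Since $\tilde\bfU^{sm}$ is normal, it suffices by algebraic Hartogs to show that $\phi^*f$ has no poles along any prime divisor $E\sub\tilde\bfU^{sm}$. If $E$ meets $\phi^{-1}(\bfU^{sm})$, then $\phi^*f$ is regular at the generic point of $E$. Otherwise $\phi(E)\sub\bfU\setminus\bfZ^{sm}$, so $E$ is exceptional. Here I would use properness: $\phi(E)$ is closed and contained in a set of codimension $\ge2$ by (a). This alone does not bound poles, so I would argue via finiteness of $f$ over $\cO_\bfZ$. The function $f$ on $\bfU^{sm}$ is a rational function on $\bfU$ that is regular off codimension $2$, hence integral over $\cO_{\bfU}$, because the normalization $\bfU^\nu$ is normal and Hartogs applies there. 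Now $\phi$ factors through $\bfU^\nu$ on the normal locus, since $\tilde\bfU^{sm}$ is normal and $\phi$ is birational. So $\phi^*f$ is integral over $\phi^{-1}\cO_\bfU$ and hence regular on the normal variety $\tilde\bfU^{sm}$.

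For sharpness, note that the zero locus of $\phi^*\eta$ on $\tilde\bfU^{sm}$ is $\{\phi^*f=0\}$. This set is contained in $\phi^{-1}$ of the closure of $\{f=0\}\cap\bfU^{sm}=\bfD$: a regular function on the normal variety $\bfU^\nu$ has zero locus equal to the closure of its zero locus on the big open set $\bfU^{sm}$. The main obstacle is the careful justification that $\phi^*f$ is regular over exceptional divisors. I would handle it through the normalization argument above, taking care that Hartogs is applied on $\bfU^\nu$, where the preimage of $\bfU^{sm}$ is still big because normalization is finite.
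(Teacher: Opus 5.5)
Your argument is correct, and it proves the lemma in the generality in which it is stated. The paper gives no proof; it calls the criterion straightforward.

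The evident intended argument is the one in your first paragraph. Write $\eta=f\omega$ and extend $f$ to a regular function on $\bfU$ by Hartogs, using (a). Then $\phi^*\eta=(\phi^*f)\tilde\omega$ is regular on $\phi^{-1}(\bfU)^{sm}$, with zero set $\phi^{-1}(\{f=0\})=\phi^{-1}(\bar\bfD)$. That argument tacitly needs $\bfZ$ to be normal. Normality does hold where the lemma is applied, since $\bfZ=(\bA^2)^{(n)}$ is the spectrum of the invariants of a polynomial ring under a finite group. It is not among the stated hypotheses, though.

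Your detour removes that assumption. You extend $f$ to $g$ on the normalization $\nu:\bfU^\nu\to\bfU$, which works because $\nu^{-1}(\bfU^{sm})$ is big and $\nu$ is finite. You then factor $\phi|_{\tilde\bfU^{sm}}=\nu\circ\psi$ by the universal property of normalization applied to the normal variety $\tilde\bfU^{sm}$. The only cost is this extra bookkeeping.

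Two small remarks:
\begin{enumerate}
\item Once you have the factorization, $\phi^*f=\psi^*g$ is regular directly. The case analysis over prime divisors $E$ is therefore unnecessary.
\item For sharpness, say explicitly that the closure statement is applied to $g$ and the big open set $\nu^{-1}(\bfU^{sm})\subset\bfU^\nu$. Then push down using $\nu(\overline{S})\subset\overline{\nu(S)}$ to get $\nu(\{g=0\})\subset\bar\bfD$. This gives $\{\phi^*f=0\}\cap\tilde\bfU^{sm}=\psi^{-1}(\{g=0\})\subset\phi^{-1}(\bar\bfD)$.
\end{enumerate}
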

We need the following notation:
\begin{notation}
Let $\bfZ$ be a smooth algebraic \Rami{(quasi-projective)} surface and $x\in Z:=\bfZ(F)$.
Define the following:
\begin{itemize}
    \item Let $\mdef{\iota_{\bfZ,n}}: \bfZ^n\to \bfZ^{(n)}$ denote the quotient map.
    \item $\mdef{\Delta^n_\bfZ} \subset \bfZ^n$ the diagonal copy.
    \item $\mdef{\bfZ^{(n)}_{diag}}:={\iota_{\bfZ,n}(\Delta^n_\bfZ)} \subset \bfZ^{\Dima{(n)}}$. Note that it is closed since  $\iota_Z$ is finite.
    \item $\mdef{\bfZ^{[n]}_{diag}}:=\mathfrak  H_{\bfZ,n}^{-1}(\bfZ^{(n)}_{diag}) \subset \bfZ^{\Dima{[n]}}$. 
    \item $\mdef{\bfZ^{(n)}_{x}}:=\iota_{\bfZ,n}(\{(x,\dots,x)\}) \subset \bfZ^{(n)}_{diag}$ and $\mdef{\bfZ^{[n]}_{x}}:=\mathfrak H_{\bfZ,n}^{-1}(\bfZ^{(n)}_{x}) \subset \bfZ^{[n]}_{diag}$.
\end{itemize}
\end{notation}

\begin{prop}[{\cite[7.4.E.3]{BK05}}]\label{prop:codim.hilb}
Let $\bfZ$ be a smooth algebraic \Rami{(quasi-projective)}  surface. 
Then for any $x\in \bfZ(F)$ we have
$\dim \bfZ^{[n]}_{x}=n-1$.
\end{prop}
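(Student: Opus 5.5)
The plan is to reduce to the punctual Hilbert scheme of the plane at the origin and compute its dimension there with a torus action, using only arguments that do not depend on the characteristic. Dimension is unchanged by extension of scalars, so we may replace $F$ by $\bar F$.

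\emph{Reduction to the plane.} A subscheme of length $n$ supported at the point $x$ is the same as an ideal of colength $n$ in $\cO_{\bfZ,x}$ containing $\mathfrak m_x^n$. Such ideals therefore depend only on $\cO_{\bfZ,x}/\mathfrak m_x^n$, and this ring is isomorphic to $\bar F[u,v]/(u,v)^n$ because $\bfZ$ is a smooth surface. So $\bfZ^{[n]}_x$ is, set-theoretically (and that is all a dimension statement needs), the locus of ideals of colength $n$ in $\bar F[u,v]/(u,v)^n$. It is thus isomorphic to $(\bA^2)^{[n]}_0$, which is a closed subvariety of a Grassmannian, hence projective. From now on take $\bfZ=\bA^2$ and $x=0$.

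\emph{Lower bound.} For $a=(a_1,\dots,a_{n-1})\in\bA^{n-1}$, consider the curvilinear ideals
$$I_a=\Bigl(v-\sum_{i=1}^{n-1}a_iu^i,\;u^n\Bigr).$$
Each $I_a$ has colength $n$ and is supported at $0$. Its quotient has basis $1,u,\dots,u^{n-1}$, so the image of $v$ in the quotient is $\sum a_iu^i$, and $I_a$ determines $a$. Hence $a\mapsto I_a$ is injective, and by the universal property it is a morphism $\bA^{n-1}\to(\bA^2)^{[n]}_0$. This gives $\dim\ge n-1$.

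\emph{Upper bound.} This is the main step and the place where the characteristic-zero arguments (Briançon's normal forms) are not available. The plan is a Bialynicki-Birula type decomposition, in the spirit of Ellingsrud--Strømme, which is characteristic-free.
\begin{enumerate}
\item The torus $\mathbb G_m^2$ acts on $(\bA^2)^{[n]}_0$ by scaling $u$ and $v$. Its fixed points are the monomial ideals, which are finite in number and indexed by partitions $\lambda$ of $n$.
\item Choose a generic one-parameter subgroup $t\mapsto(t^{a},t^{b})$ with $0<a\ll b$. Since $(\bA^2)^{[n]}_0$ is projective, every orbit has a limit as $t\to0$, and that limit is a monomial ideal. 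Concretely, the limit is obtained by taking initial ideals with respect to the corresponding monomial order, so this is Gröbner degeneration and works in any characteristic. Hence $(\bA^2)^{[n]}_0$ is the finite union of the locally closed attracting sets $C_\lambda$.
\item Bound each $\dim C_\lambda$ directly. The ideals whose initial ideal is $I_\lambda$ are parametrized by the coefficients of the reduced Gröbner basis, whose tails lie in the standard monomials. Imposing that the ideal is supported at $0$, i.e.\ contains a power of $u$ and of $v$ with nilpotent multiplication on the quotient, should cut this parametrization down to an affine space of dimension $n-\ell(\lambda)$, where $\ell(\lambda)$ is the number of parts of $\lambda$. Equivalently, this is the number of positive-weight directions among the tangent weights at $I_\lambda$ of the punctual locus.
\end{enumerate}

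The main obstacle is step 3: verifying the formula $\dim C_\lambda=n-\ell(\lambda)$, or just the upper bound $\dim C_\lambda\le n-1$, without characteristic-zero input. Since $\ell(\lambda)\ge1$, this gives $\dim\le n-1$. I would attempt it by induction on the columns of $\lambda$. Add the columns one at a time, and observe that each new generator $u^iv^{\lambda_i}+(\text{tail})$ contributes at most $\lambda_i-\lambda_{i+1}$ free parameters once nilpotency is imposed, with the first column contributing one fewer. Summing these contributions over the columns gives the bound.
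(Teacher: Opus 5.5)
The paper does not prove this proposition; it quotes it from \cite[7.4.E.3]{BK05}. Your reduction to the origin of $\bA^2$ and your lower bound via the curvilinear family $I_a$ are fine. One small correction to the reduction: a bijection of $\bar F$-points does not by itself control dimension. What you actually use is that both punctual loci are, as reduced schemes, the Hilbert scheme of length-$n$ subschemes of the isomorphic finite schemes $\mathrm{Spec}\,\cO_{\bfZ,x}/\mathfrak m_x^n$ and $\mathrm{Spec}\,\bar F[u,v]/(u,v)^n$.

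The genuine gap is the upper bound $\dim(\bA^2)^{[n]}_0\le n-1$, which is the entire substance of the statement. Your step 3 is only announced (``should cut this parametrization down'', ``I would attempt it by induction on the columns''). The column-by-column count of free parameters is precisely the nontrivial assertion, and you give nothing that controls how the condition of being an ideal constrains the tail coefficients. Your reformulation via tangent weights ``of the punctual locus'' also fails as stated. The punctual locus is singular in general, so Bialynicki-Birula theory relates nothing about its attracting sets to its Zariski tangent spaces.

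The missing idea is to do the weight count on the smooth ambient space. Take weights $a$ on $u$ and $b$ on $v$ with $b>na>0$, oriented so that $\lim_{t\to0}t\cdot I$ is the lowest-weight initial ideal, as in your step 2. As $t\to0$, the support of the subscheme defined by $t\cdot I$ is pushed to infinity unless $I$ is supported at $0$; this follows since $\mathfrak H_{\bA^2,n}$ is equivariant. Hence the attracting set of $I_\lambda$ in all of $(\bA^2)^{[n]}$ already lies in the punctual locus and equals your $C_\lambda$. If you want Bialynicki-Birula's theorem in its complete form, work instead in the smooth projective Hilbert scheme of $n$ points on the projective plane.

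These Hilbert schemes are smooth in every characteristic (\Cref{thm:dim.hilb}), and the Bialynicki-Birula decomposition is characteristic-free. So $C_\lambda$ is an affine space whose dimension is the number of attracting weights on $T_{I_\lambda}(\bA^2)^{[n]}\cong\Hom(I_\lambda,\bar F[u,v]/I_\lambda)$. These are the homomorphisms sending a monomial $m$ to a monomial $m'$ with $\mathrm{wt}(m')>\mathrm{wt}(m)$. Counting them, for instance via the arm--leg description, which is purely combinatorial, gives
$$\dim C_\lambda=n-\min\{k : u^k\in I_\lambda\}\le n-1.$$
For $n=2$ this gives $1$ at $(u,v^2)$ and $0$ at $(v,u^2)$. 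With this step your argument becomes a complete characteristic-free proof, essentially that of Ellingsrud--Str\o mme.
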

Together with \Cref{thm:dim.hilb} this proposition gives the following corollary.
\begin{cor}\label{cor:codim.hilb}
Let $\bfZ$ be a smooth algebraic \Rami{(quasi-projective)}  surface. Then 
$$\dim \bfZ^{[n]}-\dim \bfZ^{[n]}_{diag}=n-1.$$
\end{cor}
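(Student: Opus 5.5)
We want $\dim \bfZ^{[n]}-\dim \bfZ^{[n]}_{diag}=n-1$. By \Cref{thm:dim.hilb}, $\dim \bfZ^{[n]}=2n$, so the claim reduces to $\dim \bfZ^{[n]}_{diag}=n+1$. The plan is to express $\bfZ^{[n]}_{diag}$ as a family over the surface whose fibers are the punctual Hilbert schemes $\bfZ^{[n]}_x$. Since dimension is insensitive to base change to $\bar F$, and all constructions commute with field extension, I will work over $\bar F$ throughout. Then every closed point of $\bfZ^{(n)}_{diag}$ is of the form $\bfZ^{(n)}_x$ for some $x\in \bfZ(\bar F)$, and \Cref{prop:codim.hilb} applies to every such $x$.

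First, $\bfZ^{(n)}_{diag}=\iota_{\bfZ,n}(\Delta^n_\bfZ)$ is the image of $\Delta^n_\bfZ\cong\bfZ$ under the finite map $\iota_{\bfZ,n}$. By \Cref{lem:GamBij}, the induced map $\bfZ\to \bfZ^{(n)}_{diag}$ is injective on $\bar F$-points, since distinct points $(x,\dots,x)$ lie in distinct $S_n$-orbits. Hence $\dim \bfZ^{(n)}_{diag}=2$, and this map is a finite bijective morphism.

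Next, consider the restriction $\mathfrak H:\bfZ^{[n]}_{diag}\to \bfZ^{(n)}_{diag}$ of the Hilbert--Chow morphism. It is surjective, because every point $x$ is the support of some length-$n$ subscheme, for instance one cut out by $(u,v^n)$ in local coordinates. Its fiber over the point corresponding to $x$ is exactly $\bfZ^{[n]}_x$, which has dimension $n-1$ by \Cref{prop:codim.hilb}. All fibers of a surjective morphism onto the irreducible $2$-dimensional variety $\bfZ^{(n)}_{diag}$ (assuming $\bfZ$ irreducible, as in \Cref{thm:dim.hilb}) therefore have dimension $n-1$. The fiber-dimension theorem then gives $\dim \bfZ^{[n]}_{diag}=2+(n-1)=n+1$.

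Two points in the last step need care. For the upper bound one applies the fiber-dimension inequality to each irreducible component $C$ of $\bfZ^{[n]}_{diag}$: $\dim C\le \dim \overline{\mathfrak H(C)}+\max(\text{fiber dim})\le 2+(n-1)$. For the lower bound one chooses a component dominating $\bfZ^{(n)}_{diag}$ whose generic fiber has dimension $n-1$. Such a component exists because, for each $x$, the fiber $\bfZ^{[n]}_x$ is covered by the finitely many components, so some component contains an $(n-1)$-dimensional piece of $\bfZ^{[n]}_x$ for a dense set of $x$. Combining the two bounds gives $\dim \bfZ^{[n]}-\dim\bfZ^{[n]}_{diag}=2n-(n+1)=n-1$.
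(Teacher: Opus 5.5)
Your proposal is correct and follows the paper's intended argument: combine $\dim \bfZ^{[n]}=2n$ from \Cref{thm:dim.hilb} with the fact that $\bfZ^{[n]}_{diag}$ maps onto the $2$-dimensional $\bfZ^{(n)}_{diag}$ with fibers $\bfZ^{[n]}_x$ of dimension $n-1$ (\Cref{prop:codim.hilb}), which gives $\dim \bfZ^{[n]}_{diag}=n+1$. The paper leaves the fiber-dimension bookkeeping implicit, and your component-by-component upper and lower bounds supply it correctly; if $\bfZ$ is not geometrically irreducible, $\bfZ^{(n)}_{diag}$ may become reducible over $\bar F$, so run the same argument over each of its finitely many $2$-dimensional components.
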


\begin{notation}
    Write $n=n_1+n_2$. Let $\bfZ$ be a quasi-projective algebraic variety. Denote $$\mdef{\bfZ^{n_1,n_2}}:=\{(z_1,\dots,z_n)\in \bfZ^n| \left\{z_1,\dots,z_{n_1}\} \cap \{z_{n_1+1},\dots,z_{n}\}=\emptyset\right\}.$$
    Denote also $$\mdef{\bfZ^{(n_1,n_2)}}:=\bfZ^{n_1,n_2}//(S_{n_1}\times S_{n_2})$$ and by $$\mdef{\iota_{\bfZ,n_1,n_2}}: \bfZ^{n_1,n_2}\to \bfZ^{(n_1,n_2)}$$  the quotient map.
\end{notation}

\begin{lemma}[See \S \ref{subsec:Pfn1n2} below]\label{lem:n1n2}
\Rami{Let $\bfZ$ be a quasi-projective algebraic variety.} 
    Write $n=n_1+n_2$. Then there exist
    \begin{itemize}
        \item a variety $\bfZ^{[n_1,n_2]}$
        \item morphisms of varieties $\cC^\bfZ_{[n_1,n_2]},\cC^\bfZ_{(n_1,n_2)}, \cC^\bfZ_{n_1,n_2}, \fH_{\bfZ,n_1,n_2}$
        \item an open embedding $\bfZ^{[n_1,n_2]}\subset  \bfZ^{[n_1]}\times \bfZ^{[n_2]}$
    \end{itemize}
    s.t.
    \begin{itemize}
        \item 
    We have the  following commutative diagram:    \begin{equation}\label{=n1n2}
    \begin{tikzcd}[arrows={-Stealth}]
     \bfZ^{[n]} \dar["\mathfrak H_{\bfZ,n}"']   & \bfZ^{[n_1,n_2]}\drar[phantom, "\square"]\dar["\mathfrak H_{\bfZ,n_1,n_2}"']\lar["\cC_{[n_1,n_2]}^\bfZ"'] \rar[phantom,"\sub"] 
  &\bfZ^{[n_1]}\times \bfZ^{[n_2]}\dar["\mathfrak H_{\bfZ,n_1}\times \mathfrak H_{\bfZ,n_2}"]
  \\
     \bfZ^{(n)}    & \bfZ^{(n_1,n_2)}\drar[phantom, "\square"]\lar["\cC_{(n_1,n_2)}^\bfZ"'] \rar[phantom,"\sub"] 
  &\bfZ^{(n_1)}\times \bfZ^{(n_2)}
  \\
     \bfZ^{n}  \uar["\iota_{\bfZ,n}"]
     \arrow[bend right=20,swap]{rr}{\cong}& \bfZ^{n_1,n_2}\uar["\iota_{\bfZ,n_1,n_2}"]  
  \rar[phantom, "\sub"]\lar["\cC_{n_1,n_2}^\bfZ"']   
  &\bfZ^{n_1}\times \bfZ^{n_2}\uar[swap,"\iota_{\bfZ,n_1}\times \iota_{\bfZ,n_2}"] 
\end{tikzcd}
\end{equation}
    \item  The embeddings in the diagram are open.
    \item $\cC_{[n_1,n_2]}^\bfZ$ is {\'e}tale, \item the top left square in the diagram is Cartesian on the level of $\bar F$ points.
    \item The bottom curved arrow is the standard identification $\bfZ^n\cong \bfZ^{n_1}\times \bfZ^{n_2}$.
    \end{itemize}

\end{lemma}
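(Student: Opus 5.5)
We construct everything from the bottom row up and define $\bfZ^{[n_1,n_2]}$ as a fiber product. The bottom row is immediate. $\bfZ^{n_1,n_2}\subset \bfZ^{n_1}\times\bfZ^{n_2}\cong \bfZ^n$ is open, being the complement of the closed union of the partial diagonals $z_i=z_j$ with $i\le n_1<j$. We take $\cC^\bfZ_{n_1,n_2}$ to be this inclusion.

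The quotient $\bfZ^{(n_1,n_2)}$ exists by \Cref{cor:fac} and \Cref{lem:GamU}, since $\bfZ^{n_1,n_2}$ is quasi-projective. It is $S_{n_1}\times S_{n_2}$-invariant and open in $\bfZ^{n_1}\times\bfZ^{n_2}$. Applying \Cref{lem:GamU} to the factorizable action of $S_{n_1}\times S_{n_2}$ on $\bfZ^{n_1}\times\bfZ^{n_2}$ (whose quotient is $\bfZ^{(n_1)}\times\bfZ^{(n_2)}$), we get that $\bfZ^{(n_1,n_2)}$ is an open subset of $\bfZ^{(n_1)}\times\bfZ^{(n_2)}$ and that the bottom right square is Cartesian. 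We define $\cC^\bfZ_{(n_1,n_2)}$ by the universal property of the categorical quotient: the composite $\bfZ^{n_1,n_2}\to\bfZ^n\to\bfZ^{(n)}$ is $S_{n_1}\times S_{n_2}$-invariant, so it factors through $\bfZ^{(n_1,n_2)}$. This makes the bottom left square commute.

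For the top row, we set $\bfZ^{[n_1,n_2]}:=(\mathfrak H_{\bfZ,n_1}\times\mathfrak H_{\bfZ,n_2})^{-1}(\bfZ^{(n_1,n_2)})$. It is open in $\bfZ^{[n_1]}\times\bfZ^{[n_2]}$, the top right square is Cartesian by construction, and $\fH_{\bfZ,n_1,n_2}$ is the restriction. To define $\cC^\bfZ_{[n_1,n_2]}$ we use the functor of points: a point of $\bfZ^{[n_1,n_2]}$ over $\bfS$ is a pair of subschemes $\bfY_1,\bfY_2\subset\bfS\times\bfZ$, finite flat of ranks $n_1,n_2$, whose supports are disjoint fiberwise over every point of $\bfS$. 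Since both are finite over $\bfS$, their intersection is finite over $\bfS$ with empty fibers, hence empty. Therefore $\bfY:=\bfY_1\sqcup\bfY_2$ is a closed subscheme, with ideal sheaf $\cI_1\cap\cI_2=\cI_1\cI_2$, and $(pr_\bfS)_*\cO_\bfY=(pr_\bfS)_*\cO_{\bfY_1}\oplus(pr_\bfS)_*\cO_{\bfY_2}$ is locally free of rank $n$. This is a natural transformation, so by \Cref{thm:HScheme} it gives the morphism. On $\bar F$-points the multiset of $\bfY$ is the union of the two multisets, so the top left square commutes on $\bar F$-points. Because $\bfZ^{(n)}$ is a reduced variety and the source is reduced, it then commutes as morphisms.

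For the Cartesian property on $\bar F$-points, take $x\in\bfZ^{[n]}(\bar F)$ and $p\in\bfZ^{(n_1,n_2)}(\bar F)$ with the same image in $\bfZ^{(n)}$. The point $p$ gives a splitting of the support of $x$ into two disjoint multisets. Since a finite subscheme is canonically the disjoint union of its parts supported at the separate points, $x$ decomposes uniquely as $\bfY_1\sqcup\bfY_2$ with the prescribed supports. This gives exactly one preimage.

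The main obstacle is that $\cC^\bfZ_{[n_1,n_2]}$ is étale. By \Cref{lem:et.des.cols} we may work over $\bar F$, and since everything is smooth-free we show formal étaleness via the infinitesimal lifting criterion, or equivalently, isomorphism on tangent spaces together with flatness. For the tangent spaces, at $(\bfY_1,\bfY_2)$ we have $T\bfZ^{[n]}=\mathrm{Hom}(\cI,\cO/\cI)$, and since $\cO/\cI=\cO/\cI_1\oplus\cO/\cI_2$ with disjoint supports, this equals $\mathrm{Hom}(\cI_1,\cO/\cI_1)\oplus\mathrm{Hom}(\cI_2,\cO/\cI_2)$. For the cleaner version, we check the lifting criterion directly: given a square-zero thickening $\bfS\subset\bfS'$ with affine $\bfS'$, and a deformation $\bfY'$ of $\bfY=\bfY_1\sqcup\bfY_2$ over $\bfS'$, the underlying topological space of $\bfY'$ equals that of $\bfY$. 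Hence $\bfY'$ splits uniquely as $\bfY'_1\sqcup\bfY'_2$ lifting the $\bfY_i$, and each $\bfY'_i$ is flat because it is a direct summand. This gives a unique lift, so the map is formally étale, and it is locally of finite presentation, hence étale.
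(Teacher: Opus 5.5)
Your construction of the diagram is the paper's. You obtain $\bfZ^{(n_1,n_2)}$ via \Cref{lem:GamU}, define $\bfZ^{[n_1,n_2]}$ as the preimage of $\bfZ^{(n_1,n_2)}$ under $\mathfrak H_{\bfZ,n_1}\times\mathfrak H_{\bfZ,n_2}$, set $\cC^\bfZ_{[n_1,n_2]}(\bfY_1,\bfY_2)=\bfY_1\sqcup\bfY_2$, and check commutativity and the Cartesian property on $\bar F$-points.

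One step is thinner than in the paper. You simply assert that an $\bfS$-point of $\bfZ^{[n_1,n_2]}$ is a pair of families with fiberwise disjoint supports. This is the content of \Cref{lem:subfunct}. There the paper takes the disjointness locus, the complement of the image of the finite map $\tilde\bfZ^{[n_1]}\times_\bfZ\tilde\bfZ^{[n_2]}\to\bfZ^{[n_1]}\times\bfZ^{[n_2]}$. It shows this is an open subscheme representing disjoint pairs, and that it coincides with $\bfZ^{[n_1,n_2]}$ because both are open with the same $\bar F$-points. You rely on this both to define $\cC^\bfZ_{[n_1,n_2]}$ and in your lifting argument, so it should be included. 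It is routine.

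Where you genuinely differ is étaleness. The paper uses \Cref{thm:dim.hilb} to get smoothness of source and target. It is then enough to show $d_y\cC^\bfZ_{[n_1,n_2]}$ is an isomorphism, which the paper does via $T_x\bfZ^{[n]}\cong\Hom(I,\cO_\bfZ(\bfZ)/I)$ and the Chinese-remainder map $\delta$; this is your tangent-space remark.

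You instead verify the infinitesimal lifting criterion directly. A deformation $\bfY'$ over a square-zero thickening $\bfS\subset\bfS'$ has the same underlying space as $\bfY_1\sqcup\bfY_2$. It therefore splits uniquely into clopen pieces. Their pushforwards are direct summands of a locally free sheaf, hence locally free of ranks $n_1,n_2$. The lifted point lies in the open $\bfZ^{[n_1,n_2]}$ because $\bfS'$ and $\bfS$ have the same points.

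This argument is correct and uses no smoothness, so your étaleness proof works for any quasi-projective $\bfZ$, as the lemma is stated. The paper's differential argument, by contrast, depends on $\bfZ$ being a smooth surface, which is all it needs for $\bA^2$. In exchange, the paper's route gives an explicit description of the differential.

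Your appeal to \Cref{lem:et.des.cols} is unnecessary, since the lifting argument works directly over $F$.
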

\begin{notation}
    Note that         $\mdef{\bfZ^{[n_1,n_2]}}$,$\mdef{\cC^\bfZ_{[n_1,n_2]}},\mdef{\cC^\bfZ_{(n_1,n_2)}}, \mdef{\cC^\bfZ_{n_1,n_2}}, \mdef{\fH_{\bfZ,n_1,n_2}}$ are defined uniquely by the previous lemma, so we will use these notations in the rest of the section.
\end{notation}
\begin{notation}$\,$
\Rami{Let $\bfZ$ be a quasi-projective algebraic variety. Denote}
    \begin{enumerate}[(i)]
        \item $\mdef{\bfZ^n_0}:=\{(z_1,\dots,z_n)\in \bfZ^n\,|\, \forall i,j, z_i\neq z_j \}.$
        \item $\mdef{\bfZ_0^{(n)}}:=\iota_n(\bfZ_0^n)\subset \bfZ^{(n)}$. By \Cref{lem:GamU}, it is an open subset. 
        \item $\mdef{\bfZ_0^{(n_1,n_2)}}:=\iota_{n_1}\times \iota_{n_2}(\bfZ_0^n)\subset \bfZ^{(n_1,n_2)}$.
    \end{enumerate}
\end{notation}
From \Cref{lem:n1n2} we obtain the following corollary.
\begin{cor}\label{cor:n1n2}
\Rami{Let $\bfZ$ be a quasi-projective algebraic variety.} \Dima{Then we have}
    \begin{enumerate}[(i)]
        \item $\cC_{n_1,n_2}^\bfZ(\bfZ_0^{(n_1,n_2)})=\bfZ_0^{(n)}$.
        \item $\cC_{n_1,n_2}^\bfZ|_{\bfZ_0^{(n_1,n_2)}}$ is an {\'e}tale map.
        \item \label{n1n2:big}$\bfZ_0^{(n)}\sub \bfZ^{(n)}$ and $\bfZ_0^{(n_1,n_2)}\sub \bfZ^{(n_1,n_2)}$ are big subsets.
        \item \label{n1n2:open} The image $\cC_{n_1,n_2}^\bfZ(\bfZ^{(n_1,n_2)}) $ is open.
        \item \label{n1n2:diag} $\bigcup_{n_1=1}^{n-1} \cC_{n_1,n_2}^\bfZ(\bfZ^{(n)}) =\bfZ^{(n)}\smallsetminus \bfZ^{(n)}_{diag}$.
    \end{enumerate}
\end{cor}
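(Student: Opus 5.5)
The plan is to read each of the five items off the bottom two rows of diagram \eqref{=n1n2} in \Cref{lem:n1n2}, using the results of \S\ref{sec:fac} on quotients by finite groups. Write $\Gamma:=S_{n_1}\times S_{n_2}\subset S_n$. The bottom-left square says that $\cC_{n_1,n_2}^\bfZ$ is the map $\bfZ^{n_1,n_2}//\Gamma\to \bfZ^n//S_n$ induced by the open inclusion $\bfZ^{n_1,n_2}\subset \bfZ^n$. I read the $\cC$ appearing in items (i), (ii), (iv), (v) as $\cC_{(n_1,n_2)}^\bfZ$, and the $\bfZ^{(n)}$ inside the union in (v) as $\bfZ^{(n_1,n_2)}$.

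\textbf{Item (i).} Since $\bfZ_0^n\subset\bfZ^{n_1,n_2}$, commutativity of the bottom square gives $\cC(\iota_{\bfZ,n_1,n_2}(\bfZ_0^n))=\iota_{\bfZ,n}(\bfZ_0^n)$. This is exactly $\cC(\bfZ^{(n_1,n_2)}_0)=\bfZ^{(n)}_0$.

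\textbf{Item (ii).} $S_n$ acts freely on $\bfZ_0^n$, and the action is factorizable by \Cref{lem:GamU}. By \Cref{lem:Gam}, $\bfZ_0^n\to\bfZ_0^{(n)}$ is \'etale, and likewise $\bfZ_0^n\to \bfZ_0^{(n_1,n_2)}=\bfZ_0^n//\Gamma$ is \'etale; both quotients are identified by the open-embedding statement of \Cref{lem:GamU}. The composite of the second map with $\cC$ is the first map, and \'etaleness descends along an \'etale surjective map. So $\cC|_{\bfZ_0^{(n_1,n_2)}}$ is \'etale. Alternatively, use Galois descent (\Cref{cor:GalDes}) to identify $\bfZ_0^{(n_1,n_2)}$ with $\bfZ_0^n\times^{S_n}(S_n/\Gamma)$ over $\bfZ_0^{(n)}$, which is a finite \'etale cover.

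\textbf{Item (iii).} The complement of $\bfZ_0^n$ in $\bfZ^n$ is a union of diagonals $z_i=z_j$. Each has codimension $\dim\bfZ$, which is at least $2$ for a surface. I note that for curves (iii) fails, so this item should be stated for $\dim\bfZ\ge2$. Quotient maps are finite and surjective (\Cref{lem:GamBij}), hence preserve dimension of closed sets. The complement of $\bfZ_0^{(n)}$ is the image of the diagonals, because $\bfZ_0^n$ is $S_n$-invariant and the Cartesian square of \Cref{lem:GamU} applies. Hence that complement has codimension at least $2$. The same argument works for $\Gamma$.

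\textbf{Item (iv).} I would show that $\cC$ is \'etale on all of $\bfZ^{(n_1,n_2)}$, not just on $\bfZ_0^{(n_1,n_2)}$. The reason is that the stabilizer in $S_n$ of any point of $\bfZ^{n_1,n_2}$ lies in $\Gamma$. Étale-locally near a point, the quotient by $\Gamma$ and the quotient by $S_n$ then agree, for instance via the standard fact that quotients commute with flat base change and are computed on stabilizers, using slices given by $\Gamma$-invariant affine neighborhoods (\Cref{lem:q.proj.open}). Openness then follows since \'etale maps are open. If the \'etale claim is too costly, a fallback is direct: the image is the set of $S_n$-orbits meeting $\bfZ^{n_1,n_2}$, namely $\iota_{\bfZ,n}(\bigcup_{\sigma}\sigma\bfZ^{n_1,n_2})$. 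This is the image of an $S_n$-invariant open set, hence open by \Cref{lem:GamU}. This fallback is actually cleaner, and I would use it.

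\textbf{Item (v).} The same description applies: a multiset lies in the union of the images exactly when it can be split into two nonempty disjoint parts, that is, exactly when it is not supported at a single point. On $\bar F$-points this follows from \Cref{lem:GamBij}.

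The main obstacle is (ii): making the descent of \'etaleness rigorous in positive characteristic without assuming tameness.
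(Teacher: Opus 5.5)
Your proposal is correct and takes essentially the paper's route: the paper gives no separate argument and simply says the five items follow from \Cref{lem:n1n2}, which is what you do by reading them off the bottom two rows of diagram \eqref{=n1n2} together with \Cref{lem:GamBij}, \Cref{lem:GamU} and \Cref{lem:Gam}; your reading of the typos ($\cC_{(n_1,n_2)}^\bfZ$ throughout, and $\bfZ^{(n_1,n_2)}$ inside the union in (v)) is the intended one, and your caveat that (iii) needs $\dim\bfZ\ge 2$ is correct and consistent with the paper's use for $\bfZ=\bA^2$. The obstacle you flag for (ii) is not real: the actions on $\bfZ_0^n$ are free, so \Cref{lem:Gam} gives {\'e}taleness in every characteristic, and {\'e}taleness descends along an {\'e}tale surjection because it is local on the source, so no tameness hypothesis is needed and your argument for (ii) is already complete.
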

The following follows from the Zariski main theorem:
\begin{lem}\label{lem:zar.main}
    Let $\gamma:\bfZ_1\to \bfZ_2$ be a  morphism of algebraic varieties. Assume that:
    \begin{itemize}
        \item $\bfZ_i$ are irreducible.
        \item $\bfZ_2$ normal.
        \item $\gamma$ induces a bijection:  $\bfZ_1(\bar F)\to \bfZ_2(\bar F)$.
    \end{itemize}
    Then $\gamma$ is an isomorphism.
\end{lem}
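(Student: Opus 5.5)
The plan is to deduce the statement from Zariski's main theorem in the form: a quasi-finite, separated, birational morphism onto a normal variety is an open immersion. Before invoking it, I will make sure all hypotheses are visible after base change.

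\textbf{Reduction to $\bar F$.} Whether $\gamma$ is an isomorphism can be checked after the faithfully flat base change $\bar F/F$, by fpqc descent of isomorphisms. However, irreducibility and normality need not survive this base change. So I will argue over $F$, and use the bijection on $\bar F$-points only to extract geometric information.

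\textbf{Step 1: $\gamma$ is quasi-finite and surjective.} The fibres of $\gamma_{\bar F}$ over $\bar F$-points are single points, so every fibre of $\gamma$ is finite, i.e.\ $\gamma$ is quasi-finite. It is surjective because its image contains all closed points of $\bfZ_2$ and is constructible, by Chevalley.

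\textbf{Step 2: $\gamma$ is birational.} Since $\gamma$ is quasi-finite and dominant between irreducible varieties, $\dim\bfZ_1=\dim\bfZ_2$. The function field extension $F(\bfZ_1)/F(\bfZ_2)$ is therefore finite. By generic flatness, over a dense open subset of $\bfZ_2$ the map $\gamma$ is finite flat of some degree $d$.

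The degree splits as $d=d_s\cdot d_i$ (separable and inseparable parts), and the geometric fibres generically have exactly $d_s$ points. The bijection on $\bar F$-points gives $d_s=1$, but it does not rule out $d_i>1$. This is the main obstacle: a purely inseparable map such as Frobenius onto a normal variety is bijective on $\bar F$-points but is not an isomorphism.

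So in positive characteristic the statement as written needs the extra input that $\gamma$ is birational, or separable (for instance étale somewhere, or an isomorphism on a dense open set). I expect this hypothesis to hold wherever the lemma is applied, since there $\gamma$ arises from étale or birational maps. I will therefore prove the lemma under the added assumption that $\gamma$ is birational, and remark that in characteristic $0$ this is automatic, because $d_i=1$ there.

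\textbf{Step 3: conclusion.} With $\gamma$ birational and quasi-finite and $\bfZ_2$ normal, Zariski's main theorem (Grothendieck's form) shows that $\gamma$ is an open immersion. This works as follows: $\gamma$ factors as an open immersion into a finite $\bfZ_2$-scheme, and we may take its normalization in $F(\bfZ_1)=F(\bfZ_2)$, which is $\bfZ_2$ itself by normality. Since $\gamma$ is surjective by Step 1, the open immersion is an isomorphism.
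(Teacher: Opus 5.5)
Your objection is correct, and the gap it exposes is in the paper's proof, not in yours. The paper argues: ``$\gamma$ is dominant and the fibers over geometric points are singletons. Hence $\gamma$ is birational.'' This is exactly the inference you show fails in characteristic $p$. For any $F$ of characteristic $p$, the $F$-morphism $\bA^1\to\bA^1$ given by $t\mapsto t^p$ on coordinate rings is a morphism of irreducible normal varieties. It is bijective on $\bar F$-points because $\bar F$ is perfect, but it has degree $p$. So the lemma as literally stated is false, and no proof of it can exist.

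Once birationality is granted, your Step~3 coincides with the rest of the paper's argument. You factor $\gamma=\pi\circ j$ by Zariski's main theorem and use normality of $\bfZ_2$ to see that the finite birational part is an isomorphism; the paper writes $\bfZ_1$ here, which is a slip. Surjectivity then makes the open immersion an isomorphism. Your Steps~1--2 are correct: quasi-finiteness, surjectivity via Chevalley, and the observation that the bijection only forces the separable degree to be $1$. They are the precise version of what the paper glosses over. The separatedness you need for Zariski's main theorem costs nothing. The image of the diagonal of $\gamma$ is locally closed and, by injectivity on $\bar F$-points, contains every closed point of $\bfZ_1\times_{\bfZ_2}\bfZ_1$. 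Hence it is everything, and the diagonal is a closed immersion.

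You leave it as an expectation that the added hypothesis holds where the lemma is used; this should actually be checked. The lemma is used exactly once, in \Cref{lem:Hilb2}, for $\gamma\colon Bl_{\Delta_\bfZ}\bfZ^2//S_2\to\bfZ^{[2]}$, and there birationality does hold. Let $\bfU$ be the complement of the exceptional divisor, identified with $\bfZ^2\smallsetminus\Delta_\bfZ=\bfZ^{[1,1]}$. On $\bfU$ the ideals $\cI_1,\cI_2$ are comaximal and $\cI_3\subset\cI_1\cap\cI_2$, so $\cI_1\cI_2+\cI_3=\cI_1\cap\cI_2$. Therefore $q_{S_2}|_{\bfU}=\cC^{\bfZ}_{[1,1]}$, which is \'etale by \Cref{lem:n1n2}; the proof of that \'etaleness does not use \Cref{lem:Hilb2}. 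Since $\bfU\to\bfU//S_2$ is \'etale and surjective by \Cref{lem:Gam}, the induced map $\bfU//S_2\to\bfZ^{[2]}$ is \'etale. It is injective on $\bar F$-points, so by \Cref{lem:etal.bij} it is an isomorphism onto its open image. Thus $\gamma$ is an isomorphism over a dense open set, your corrected lemma applies, and the conclusion of \Cref{lem:Hilb2} survives.
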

\begin{proof}
  \Eitan{Notice that $\gamma$ is dominant and the fibers over geometric points are singletons. Hence $\gamma$ is birational. Also $\gamma$ is quasi-finite. By Zariski main theorem we can decompose: 
    $\gamma=\pi \circ j$ with $j: \bfZ_1 \to \bfZ_3, \pi: \bfZ_3 \to \bfZ_2$ where $\pi$ is finite and $j$ is an open immersion. As $\bfZ_1$ is normal\RamiC{,} and
    finite birational morphism onto a normal variety is an isomorphism\RamiC{,}
    it follows that $\pi$ is an isomorphism. As the image of $j$ must contain all geometric points, it is easy to see that $j$ is also an isomorphism and we are done.}
\end{proof}



\begin{lem}
\label{lem:Hilb2}
    Let $\bfZ=\A^2$. 
    Then there is a commutative diagram:
    \begin{equation}\label{eq:hilb.blow}        
    \begin{tikzcd}[arrows={-Stealth}]      Bl_{\Delta_\bfZ}\bfZ^2\dar["q_{S_2}"]\rar["bl"']   & \bfZ^{2} \dar["\iota_{\bfZ,2}"]
  \\
     \bfZ^{[2]}\rar["\mathfrak H_{\bfZ,2}"']    & \bfZ^{(2)}
\end{tikzcd}
\end{equation}
where the top row is the blowing-up \RamiC{of $\bfZ^2$ along the diagonal $\Delta_\bfZ$,} and the left vertical arrow is the quotient map by the action of $S_2$ given by the flip of the 2 copies of $\bfZ$.
\end{lem}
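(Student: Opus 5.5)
Writing $\bfZ=\A^2$, I would build the map $q_{S_2}$ by going through the universal family, and then identify it with the categorical quotient using \Cref{lem:zar.main}. First reduce to coordinates. Put $u=x_1-x_2$, $v=y_1-y_2$ and $s=x_1+x_2$, $t=y_1+y_2$. The diagonal $\Delta_\bfZ$ is then $\{u=v=0\}$, and $\bfZ^2\cong \A^2_{s,t}\times\A^2_{u,v}$, so that
$$Bl_{\Delta_\bfZ}\bfZ^2\cong \A^2_{s,t}\times Bl_0\A^2_{u,v}.$$
This is a smooth irreducible $4$-fold, covered by the two charts $v=\lambda u$ and $u=\mu v$.

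\textbf{Step 1: the family of subschemes.} I will define a family of length-$2$ subschemes $\bfY\subset Bl_{\Delta_\bfZ}\bfZ^2\times\bfZ$. On the chart $v=\lambda u$ it is cut out by
$$(X-x_1)(X-x_2)=0,\qquad (Y-y_2)-\lambda(X-x_2)=0.$$
This is the line through the two points with slope $\lambda$, together with the two points on it. One checks that $y_1-y_2=\lambda(x_1-x_2)$ holds on the chart, so both points lie on the line. Since the first equation is monic of degree $2$ in $X$, and $Y$ is eliminated by the second, the quotient $\cO[X,Y]/I$ is free with basis $1,X$. The other chart is handled symmetrically, and the two ideals agree on the overlap because the slope data match. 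The representability in \Cref{thm:HScheme} then gives a morphism $q:Bl_{\Delta_\bfZ}\bfZ^2\to\bfZ^{[2]}$.

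The ideal is visibly invariant under the flip $(x_1,y_1)\leftrightarrow(x_2,y_2)$: it fixes $\lambda$ and swaps the two linear factors. Rewriting the second generator via the first relation confirms this. Hence $q$ is $S_2$-invariant.

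Commutativity of \eqref{eq:hilb.blow} can be checked on $\bar F$-points, since the source is reduced. The subscheme over a point has support $\{z_1,z_2\}$ with the correct multiplicities, so $\mathfrak H_{\bfZ,2}\circ q=\iota_{\bfZ,2}\circ bl$.

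\textbf{Step 2: $q$ induces an isomorphism from the quotient.} The flip acts on the affine charts of the blow-up, which are $S_2$-invariant, so by \Cref{lem:fac} and \Cref{prop:aff.fac} the quotient $Bl_{\Delta_\bfZ}\bfZ^2//S_2$ exists. The map $q$ therefore factors as $\bar q: Bl_{\Delta_\bfZ}\bfZ^2//S_2\to\bfZ^{[2]}$.

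I would then apply \Cref{lem:zar.main} to $\bar q$. The target $\bfZ^{[2]}$ is smooth and irreducible by \Cref{thm:dim.hilb}, hence normal, and the source is irreducible. For the bijection on $\bar F$-points I use \Cref{lem:GamBij}, which identifies the $\bar F$-points of the quotient with $S_2$-orbits. Off the diagonal, an orbit is an unordered pair of distinct points, and this is exactly a reduced length-$2$ subscheme. Over the diagonal point $(z,z)$, an exceptional point is a tangent direction $[1:\lambda]$, fixed by the flip, and it maps to the length-$2$ scheme at $z$ with tangent direction $\lambda$. Every length-$2$ scheme supported at one point is of this form, since it is $\operatorname{Spec}\bar F[\epsilon]/\epsilon^2$ embedded by a direction. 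So $\bar q$ is bijective on $\bar F$-points, and \Cref{lem:zar.main} gives that $\bar q$ is an isomorphism. Consequently $q_{S_2}:=q$ is the quotient map.

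\textbf{Main obstacle.} The main subtlety is the characteristic-$2$ case. There the formulas with $s=x_1+x_2$ and $u=x_1-x_2$ should be handled with care: one should blow up the ideal $(x_1-x_2,\,y_1-y_2)$ directly rather than change coordinates. The flip fixes the exceptional divisor pointwise, so the quotient is not étale there. This is exactly why I go through \Cref{lem:zar.main} rather than any explicit invariant computation, and why I avoid using \Cref{lem:Gam}. The remaining points to check carefully are that the two chart families glue, and that the quotient is irreducible, which holds as the image of an irreducible variety.
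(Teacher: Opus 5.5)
Your construction is the paper's, and everything through the bijectivity on $\bar F$-points checks out. On the chart $v=\lambda u$, your ideal $\bigl((X-x_1)(X-x_2),\,(Y-y_2)-\lambda(X-x_2)\bigr)$ is exactly $\langle \cI_1\cI_2,\cI_3\rangle$, since modulo the line equation $\cI_1$ and $\cI_2$ become $(X-x_1)$ and $(X-x_2)$. As in the paper, you then factor $q$ through $Bl_{\Delta_\bfZ}\bfZ^2//S_2$ and conclude via \Cref{lem:zar.main}.

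That last step has a genuine gap in positive characteristic, which is exactly where you say you rely on it; the paper's proof has the same gap. \Cref{lem:zar.main} holds in characteristic $0$ but is false as stated in characteristic $p$. The Frobenius $\A^1\to\A^1$, $x\mapsto x^p$, is a morphism of smooth irreducible curves that is bijective on $\bar F$-points but not an isomorphism. The lemma's proof breaks at ``the fibers over geometric points are singletons, hence $\gamma$ is birational'', which fails for purely inseparable maps. So you must also show that $\bar q$ is birational. Once you have that, the Zariski main theorem argument in the proof of \Cref{lem:zar.main} goes through.

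Birationality is cheap here. Let $\overline{bl}\colon Bl_{\Delta_\bfZ}\bfZ^2//S_2\to\bfZ^{(2)}$ be the map induced by $bl$, and let $V$ be the image of the complement of the exceptional divisor. By \Cref{lem:GamU}, $\overline{bl}|_V$ is an isomorphism onto the open set $\bfZ^{(2)}_0$. By the commutativity you established and uniqueness of factorization through the quotient, $\overline{bl}=\mathfrak H_{\bfZ,2}\circ\bar q$. Hence $\bar q|_V$ has the left inverse $(\overline{bl}|_V)^{-1}\circ\mathfrak H_{\bfZ,2}$. It is therefore a closed immersion of the integral $4$-dimensional variety $V$ into the integral $4$-dimensional open set $\mathfrak H_{\bfZ,2}^{-1}(\bfZ^{(2)}_0)$, and so an isomorphism there.

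Alternatively, you can drop \Cref{lem:zar.main} entirely, and your characteristic-$2$ worry disappears, by using coordinates adapted to the line. On the chart put $w:=y_2-\lambda x_2=y_1-\lambda x_1$. Then:
the chart is $\A^4$ with coordinates $x_1,x_2,\lambda,w$;
the flip simply swaps $x_1$ and $x_2$, so the invariant ring is $F[x_1+x_2,\,x_1x_2,\,\lambda,\,w]$ in every characteristic;
your ideal becomes $\bigl(X^2-(x_1+x_2)X+x_1x_2,\ Y-\lambda X-w\bigr)$.
Thus $\bar q$ maps this chart of the quotient isomorphically onto the open subset of $\bfZ^{[2]}$ on which $1,X$ is a basis of the quotient algebra. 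That subset is an $\A^4$ whose coordinates are the coefficients of $X^2+aX+b$ and $Y-cX-d$. The other chart is handled symmetrically. The ramification along the exceptional divisor occurs in every characteristic and causes no trouble.
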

\begin{proof}
    First let us  construct the map $q_{S_2}$. \Rami{By the definition of the Hilbert scheme $\bfZ^{[2]}$ this means} to construct a scheme $\bfY\subset (Bl_{\Delta \bfZ}\bfZ^2)\times \bfZ^2$ which is finite flat of rank 2 over $Bl_{\Delta \bfZ}\bfZ^2$.
    Realize $Bl_{\Delta\bfZ}\bfZ^2$  as $$\{\RamiC{(l,x,y)}|l \text{ is a line in } \bfZ; x,y\in l\}$$ We get $$(Bl_{\Delta \bfZ}\bfZ^2)\times \bfZ=\{\RamiC{(l,x,y,z)}|l \text{ is a line in } \bfZ;x,y\in l; z\in \bfZ\}$$
    Let $\cI_1,\cI_2,\cI_3$ be the sheaves of ideals in $(Bl_{\Delta \bfZ}\bfZ^2)\times \bfZ$ given by the conditions: 
    \begin{enumerate}[1.]
        \item $x=z $
        \item $y=z $
        \item $z\in l$
    \end{enumerate}
    respectively.
Define $\cI:=\langle \cI_1 \cI_2,\cI_3\rangle$, and let $\bfY$ be its $0$-locus. It is easy to see that $\bfY$ is finite flat  of rank 2 over $Bl_{\Delta \bfZ}\bfZ^2$ and thus defines a map $q_{S_2}:Bl_{\Delta \bfZ}\bfZ^2 \to \bfZ^{[2]}$. By \RamiC{\Cref{cor:fac}} there exists a categorical quotient $Bl_{\Delta \bfZ}\bfZ^2 //S_2$. So  $q_{S_2}$ factors through a map $\gamma:Bl_{\Delta \bfZ}\bfZ^2 //S_2\to \bfZ^{[2]}$. It is easy to see that this map is a bijection on the level of $\bar F$ points. Also, by \Cref{thm:dim.hilb}  $\bfZ^{[2]}$ is smooth and irreducible. Hence 
\Cref{lem:zar.main} implies that $\gamma$ is an isomorphism.

So we constructed the diagram \eqref{eq:hilb.blow} and proved that $q_{S_2}$ is the quotient map by the action of $S_2$. It is left to show that this diagram is commutative. It is enough to verify it on the level of $\bar F$ points. This follows from the definitions.  
\end{proof}

The following lemma is obvious.
\begin{lemma}\label{lem:BigInt}
Let $\gamma:\bfX\to \bfY$ be a modification of algebraic varieties. Let $\bfU\sub \bfY$ be an open set. Assume that:
\begin{itemize}
    \item $\gamma^{-1}(\bfU)$ is big in $\bfX$
    \item $\gamma|_{\gamma^{-1}(\bfU)}\to \bfU$ is a (sharply) integrable modification.
\end{itemize}
Then $\gamma:\bfX\to \bfY$ is a (sharply) integrable modification.
\end{lemma}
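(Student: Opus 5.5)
The statement to prove is \Cref{lem:BigInt}. I would reduce it to Hartogs extension of regular top forms across the complement of a big open set on the smooth locus of $\bfX$. Fix an open $\bfV\sub\bfY$ and a top form $\omega$ on $\bfV^{sm}$. We must show that $\gamma^*\omega$, a rational form on $\gamma^{-1}(\bfV)$, is regular on $\gamma^{-1}(\bfV)^{sm}$. In the sharp case we must also show that its zero locus lies in $\gamma^{-1}(\bar\bfD)$, where $\bfD$ is the zero locus of $\omega$.

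First I restrict to $\bfU$. Put $\bfV':=\bfV\cap\bfU$, which is open in $\bfU$, and note $\bfV'^{sm}=\bfV^{sm}\cap\bfU$. The hypothesis on $\gamma|_{\gamma^{-1}(\bfU)}$, applied to $\bfV'$ and $\omega|_{\bfV'^{sm}}$, shows that $\gamma^*\omega$ is regular on $\gamma^{-1}(\bfV')^{sm}=\gamma^{-1}(\bfV)^{sm}\cap\gamma^{-1}(\bfU)$. In the sharp case it also shows that $\gamma^*\omega$ vanishes there only on $\gamma^{-1}(\overline{\bfD\cap\bfU})\sub\gamma^{-1}(\bar\bfD)$.

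Next I extend. The set $\gamma^{-1}(\bfU)$ is big in $\bfX$, so its intersection with the open set $\gamma^{-1}(\bfV)^{sm}$ is big in $\gamma^{-1}(\bfV)^{sm}$. Indeed, the complement has codimension at least $2$ in $\bfX$, and its intersection with any open subset is empty or of codimension at least $2$ in each component, which is the dimension count relevant here. On a smooth (hence normal) variety, $\Omega^{top}$ is a line bundle. A section of a line bundle defined off a closed subset of codimension at least $2$ extends uniquely, by algebraic Hartogs, since $\cO$ of a normal variety satisfies $S_2$. The rational form $\gamma^*\omega$ agrees with this extension on a dense open set. Hence $\gamma^*\omega$ is regular on all of $\gamma^{-1}(\bfV)^{sm}$, which gives integrability.

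For sharpness, work locally: write $\gamma^*\omega=f\cdot\sigma$ with $\sigma$ a local generator of $\Omega^{top}$ and $f$ regular. The zero locus of $f$ is either empty or a divisor, and each of its components has codimension $1$. Such a component cannot lie in the complement of $\gamma^{-1}(\bfU)$, because that complement has codimension at least $2$. So each component meets $\gamma^{-1}(\bfU)$ in a dense open subset. That subset lies in the closed set $\gamma^{-1}(\bar\bfD)$ by the first step, so by taking closures the whole component does too.

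The only step needing care is the extension, and specifically checking that bigness passes to open subsets and to the smooth locus. I would state it with codimension measured in each irreducible component; this presents no real obstacle, which matches the paper calling the lemma obvious.
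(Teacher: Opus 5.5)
Your proposal is correct and is essentially the argument the paper intends: the paper calls the lemma obvious and omits the proof. The extension step you use (Hartogs for the line bundle $\Omega^{top}$ across a big open subset of a smooth variety) is the one invoked in the proof of Lemma~\ref{lem:BigInt2}, and your treatment of the sharp case correctly supplies the detail the paper leaves implicit: each zero component has codimension at most one, so it meets $\gamma^{-1}(\bfU)$ in a dense open subset.
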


\begin{lem}\label{lem:BigInt2}
    Assume that we have a commutative diagram
       $$\begin{tikzcd}[arrows={-Stealth}]
      \bfZ_{11}\dar["m_1"']   & \bfZ_{12} \dar["m_2"]\lar["e_1"]
  \\
     \bfZ_{21}   & \bfZ_{22}\lar["e_2"'] 
\end{tikzcd}$$
Assume that $e_1$ is onto and {\'e}tale, $e_2$ is of relative dimension zero, $m_i$ are resolutions of singularities, $m_2$ is an integrable modification, and there exists an open smooth set $\bfU \subset \bfZ_{21}$ such that $e_2^{-1}(\bfU)\subset \bfZ_{22}$ is a big subset. Then $m_1$ is an integrable modification.
\end{lem}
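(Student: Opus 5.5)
Fix an open $\bfW\subset \bfZ_{21}$ and a top form $\omega$ on $\bfW^{sm}$. We must show that the rational form $m_1^*\omega$ on $m_1^{-1}(\bfW)$ is regular there; since $m_1$ is a resolution, $m_1^{-1}(\bfW)$ is smooth. The approach is to pull everything back along the étale surjection $e_1$, prove regularity upstairs using $m_2$, and descend.

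Step 1. Put $\bfW':=e_2^{-1}(\bfW)$. Commutativity gives $e_1^*m_1^*\omega=m_2^*e_2^*\omega$ as rational forms on $m_2^{-1}(\bfW')=e_1^{-1}(m_1^{-1}(\bfW))$. Since $e_1$ is étale and surjective, regularity is étale local: if $e_1^*\eta$ is regular for a rational top form $\eta$, then $\eta$ is regular, because $e_1^*\Omega^{top}\cong\Omega^{top}$ and a pole of $\eta$ along a divisor would pull back to a pole. So it suffices to show that $m_2^*e_2^*\omega$ is regular on $m_2^{-1}(\bfW')$.

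Step 2. I would like to apply integrability of $m_2$ to the form $e_2^*\omega$. The difficulty is that $e_2$ is only of relative dimension zero, not étale, so $e_2^*\omega$ is a priori just a rational form on $\bfW'$ and may fail to be regular on all of $\bfW'^{sm}$. This is where $\bfU$ comes in. Restrict first to $\bfW\cap\bfU$, which is smooth, and to $\bfW'_0:=e_2^{-1}(\bfW\cap\bfU)\cap\bfZ_{22}^{sm}$. There $e_2^*\omega$ is a regular form, since it is the pullback of a regular form by a morphism of smooth varieties. Integrability of $m_2$ then gives that $m_2^*e_2^*\omega$ is regular on $m_2^{-1}(\bfW'_0)$.

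Step 3. Extend by bigness, using the obvious principle behind \Cref{lem:BigInt}: a rational form on a smooth variety that is regular off a codimension $\ge2$ set is regular. Two facts are needed.
\begin{enumerate}[(a)]
\item $e_2^{-1}(\bfU)$ is big in $\bfZ_{22}$ by assumption.
\item $\bfZ_{22}^{sm}$ is big, assuming $\bfZ_{22}$ is normal. This holds in the intended application, where $\bfZ_{22}$ is a product of symmetric powers.
\end{enumerate}
The main obstacle is to pass bigness to $m_2^{-1}(\bfW'_0)\subset m_2^{-1}(\bfW')$, because a modification can make a codimension-$2$ set into a divisor. What I would try first is to argue that, as in the application, the hypotheses effectively let $\bfU$ be replaced by a locus whose preimage under $e_2\circ m_2$ is big. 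Failing that, I would deduce regularity across the exceptional divisors from the integrability of $m_2$ applied directly to $e_2^*\omega$, after showing that $e_2^*\omega$ is regular on $\bfW'^{sm}$. That regularity would follow from (a) together with Hartogs on the smooth locus $\bfW'^{sm}$. Once $e_2^*\omega$ is a top form on $\bfW'^{sm}$, the definition of integrability of $m_2$ directly gives that $m_2^*e_2^*\omega$ is regular on $m_2^{-1}(\bfW')$. Step 1 then concludes the proof.
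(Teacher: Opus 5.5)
Your argument is correct and, once the detour is removed, it is the paper's proof. You reduce along the {\'e}tale surjection $e_1$ to regularity of $m_2^*e_2^*\omega$ on $m_2^{-1}e_2^{-1}(\bfW)$. Bigness of $e_2^{-1}(\bfU)$ plus Hartogs on the smooth locus of $e_2^{-1}(\bfW)$ then makes $e_2^*\omega$ a regular top form there, and integrability of $m_2$ finishes.

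The first attempt in Steps 2--3 (extending upstairs from $m_2^{-1}(\bfW'_0)$) and the normality assumption (b) are unnecessary. Applying Hartogs downstairs before invoking integrability of $m_2$ is exactly what avoids the obstacle you identified, and only hypothesis (a) is needed.
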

\begin{proof}
    Let $\bfV\sub \bfZ_{21}$ be an open set, and $\omega$ be a regular top form on the smooth locus $\bfV^{sm}$ of $\bfV$. We have to show that $m_1^*(\omega)$ can be extended to a top form on $m_1^{-1}(\bfV)$. 
    Since $e_{1}$ is onto {\'e}tale, it is enough to show that $e_{1}^*m_1^*(\omega)$ can be extended to a top form on $e_1^{-1}m_1^{-1}(\bfV)$. Equivalently, it is enough to show that $m_{2}^*e_2^*(\omega)$ can be extended to a top form on $m_2^{-1}e_2^{-1}(\bfV)$. 
    Note that $e_2^*(\omega)$ is regular on $e_2^{-1}(\bfV^{sm}\cap \bfU)=e_2^{-1}(\bfV)\cap e_2^{-1}(\bfU)$. Since $e_2^{-1}(\bfU)$ is big in $\bfZ_{22}$ this implies that $e_2^*(\omega)$ can be extended to a regular form on the smooth locus of $e_2^{-1}(\bfV)$. Now, the fact that $m_2$ is integrable implies the assertion. 
\end{proof}

\begin{proof}[Proof of \Cref{thm:int}]
        Let $\bfZ:=\bA^2$.
   By \Cref{cor:Hil.chow.res} the Hilbert-Chow map is a  resolution of singularities. So we need to show that it is  integrable. We  will do it by analyzing the following cases.
    
    \begin{enumerate}[{Case} 1.]    
\item $n=2$.\\
    Let $\bfU \sub \bfZ^2$ be the complement to the diagonal. 
    By \Cref{lem:GamU}  $\iota_{\bfZ,2}(\bfU)$ is open in $\bfZ^{(2)}$, and  $\iota_{\bfZ,2}(\bfU)  \cong \bfU//S_2$. By \Cref{lem:Gam}, $\bfU//S_2$ is smooth. We obtain that $\bfV:=\iota_{\bfZ,2}(\bfU)$ is an open subset of the smooth locus of $\bfZ^{(2)}$. In fact, it is equal to this smooth locus.
    Also, $\iota^{-1}_{\bfZ,2}(\bfV)\subset \bfZ^2$ is big. 
    Let $\omega_{\bfZ^2}$ be the standard top form on $\bfZ^2=\bA^4$. It is easy to see that it is $S_2$-invariant. Thus, by \Cref{cor:GalDes}, there exists a top form $\omega_\bfV$ s.t. $\iota_{\bfZ,2}^*(\omega_\bfV)=\omega_{\bfZ^2}|_{\bfU}$. It is easy to see that $\omega_\bfV$ is an invertible form.     
    Consider $\omega_\bfV$ as a rational top form on $\bfZ^{(2)}$.
    
    \Rami{By \Cref{lem:crit.int}} it is enough to show that
$\mathfrak H_{\bfZ,2}^*(\omega_\bfV)$ is an invertible top form on $\bfZ^{\Rami{[2]}}$.

By \Cref{lem:Hilb2} we have the following commutative diagram:
   $$        
    \begin{tikzcd}[arrows={-Stealth}] Bl_{\Delta_\bfZ}\bfZ^2\dar["q_{S_2}"]\rar["bl"']   & \bfZ^{2} \dar["\iota_{\bfZ,2}"]
  \\     \bfZ^{[2]}\rar["\mathfrak H_{\bfZ,2}"']    & \bfZ^{(2)}
\end{tikzcd}
$$
where the top row is the blowing-up and the left vertical arrow is the quotient map by the action of $S_2$.

The assertion follows now from the following 2 statements:
\begin{enumerate}
\item\label{thm:Hil.Chow.int:stat.a}  For an invertible form $\omega$ on $\bfZ^2$, the zero locus of the form $bl^*(\omega)$ is the divisor $bl^{-1}(\Delta_\bfZ)$ with multiplicity $1$.
\item\label{thm:Hil.Chow.int:stat.b} If $\omega$ is a rational form on $\bfZ^{[2]}\cong Bl_{\Delta_\bfZ}\bfZ^2//S_2$ s.t. $q_{S_2}^*(\omega)$ is a regular form and its zero locus is the divisor $bl^{-1}(\Delta_\bfZ)$ with multiplicity $1$ then $\omega$ is regular.
\end{enumerate}
\begin{enumerate}[Proof of (a):]
    \item This is a standard property of a  blowing up of a smooth variety along smooth subvariety of co-dimension $2$.
    \item As in the proof of \Cref{lem:Hilb2} we can realize $Bl_{\Delta_\bfZ}\bfZ^2$ as 
    $\{\RamiC{(l,x,y)}|l \text{ is a line in } \bfZ; x,y\in l\}.$ This gives a map $Bl_{\Delta_\bfZ} \bfZ^2\to \cL$, where $\cL$ is the collection of lines in $\bfZ=\bA^2$. This map is $S_2$-invariant, so we get a commutative diagram:       \begin{equation}\label{diag:bl.hilb.L} 
    \begin{tikzcd}[arrows={-Stealth}]  Bl_{\Delta_\bfZ}\bfZ^2\dar["q_{S_2}"]\drar[""']   &
  \\     (Bl_{\Delta_\bfZ}\bfZ^2)//S_2\rar[""']    & \cL
\end{tikzcd}
\end{equation}   

The statement is Zariski local on $\cL$. Let $\tilde\cL=\bigsqcup \cL_i\to \cL$ be a   Zariski cover that trivializes the tautological bundle.
When we pull the diagram \eqref{diag:bl.hilb.L} to $\tilde L$ we obtain a diagram isomorphic to: 
\begin{equation}
    \begin{tikzcd}[arrows={-Stealth}]      \tilde\cL\times \bA^2 \dar[""]\drar[""']   &
  \\
     \tilde\cL\times \bA^2//S_2 \rar[""']    & \tilde\cL
\end{tikzcd}
\end{equation}   
where $S_2$ acts on $\bA^2$ by flipping the coordinates.

Let $q'_{S_2}:\bA^2\to  \bA^2//S_2$ be the quotient map.

It is enough to show that 
if $\eta$ is a rational form on $\bA^2//S_2$ s.t. $(q'_{S_2})^*(\eta)$ is regular and its zero locus is  he divisor $\Delta_{\bA^1}\subset \bA^2$ with multiplicity $1$ then \Rami{$\eta$} is regular. This is a straightforward computation.
\end{enumerate}

    \item The general case.
    We prove the statement by induction on $n$. Case $n=1$ is obvious. Case $n=2$ is the previous case.  Assume $n>2$. 
    By \Cref{prop:codim.hilb} we have $\dim  \bfZ^{(n)} -\dim  \bfZ^{(n)}_{diag}>1$. Thus by \Cref{lem:BigInt} it is enough to prove that  $$\mathfrak H_{\bfZ,n}|_{\bfZ^{(n)}}:\bfZ^{(n)} \smallsetminus \bfZ^{(n)}_{diag} \to \bfZ^{[n]} \smallsetminus \bfZ^{[n]}_{diag}$$
     is  an integrable modification. 
     Write $n=n_1+n_2$ with $n_1,n_2>0$.
    Denote $\bfU:=\cC_{n_1,n_2}^\bfZ(\bfZ^{(n_1,n_2)})\subset \bfZ^{(n)}$. By \Cref{cor:n1n2}\eqref{n1n2:open}, it is an open subscheme. Denote $\bfV:=\mathfrak H_{\bfZ,n}^{-1}(\bfU)$.
    By \Cref{cor:n1n2}\eqref{n1n2:diag}, it is enough to show that 
     $$\mathfrak H_{\bfZ,n}|_{\bfV}:\bfV \to \bfU$$ is an integrable modification, for any decomposition $n=n_1+n_2$. 
     Let $\mathfrak H_{\bfZ,n_1,n_2}:\bfZ^{[n_1,n_2]}\to \bfZ^{(n_1,n_2)}$ be as in \Cref{lem:n1n2}.
     \Cref{lem:n1n2} and the induction hypothesis imply that 
     $\mathfrak H_{\bfZ,n_1,n_2}$ is an integrable modification. 
     By \Cref{lem:n1n2}, the following diagram is commutative.
     $$\begin{tikzcd}[arrows={-Stealth}]
     \bfV \dar["\overset{o}{\mathfrak H}_{\bfZ,n}"']   & \bfZ^{[n_1,n_2]}\dar["\mathfrak H_{\bfZ,n_1,n_2}"']\lar["\overset{\circ}{\cC}_{[n_1,n_2]}"']  \\
     \bfU    & \bfZ^{(n_1,n_2)} \lar["\overset{\circ}{\cC}_{(n_1,n_2)}"'] 
  \end{tikzcd},$$
     where the maps $\overset{\circ}{\cC}_{[n_1,n_2]}$, $\overset{\circ}{\cC}_{(n_1,n_2)}$, and $\overset{o}{\mathfrak H}_{\bfZ,n}$ are obtained by restriction from the maps $\cC_{[n_1,n_2]}$ and ${\mathfrak H}_{\bfZ,n}$.
     Moreover, this diagram is a Cartesian square on the level of $\RamiC{\bar F}$-points. This implies that the map $\overset{\circ}{\cC}_{[n_1,n_2]}$ is onto. By \Cref{lem:n1n2}, it is also {\'e}tale. By \Cref{cor:n1n2}\eqref{n1n2:big} and \Cref{lem:BigInt2}, 
     $\mathfrak H_{\bfZ,n}|_{\bfV}:\bfV \to \bfU$ is an integrable modification, as required.
\end{enumerate}

\end{proof}

\subsection{Proof of \Cref{lem:n1n2}}\label{subsec:Pfn1n2}

Note that by \Cref{lem:GamU}, we have the Cartesian square 
\begin{equation}\label{eq:cart.Z(n1,n2)}    
\begin{tikzcd}
     \bfZ^{(n_1,n_2)}\drar[phantom, "\square"]\rar[phantom,"\sub"] 
  &\bfZ^{(n_1)}\times \bfZ^{(n_2)}
  \\
 \bfZ^{n_1,n_2}\uar["\iota_{\bfZ,n_1,n_2}"]  
  \rar[phantom, "\sub"]
  &\bfZ^{n_1}\times \bfZ^{n_2}\uar[swap,"\iota_{\bfZ,n_1}\times \iota_{\bfZ,n_2}"] 
\end{tikzcd}
\end{equation}

Denote $\bfZ^{[n_1,n_2]}:=(\mathfrak H_{\bfZ,n_1}\times \mathfrak H_{\bfZ,n_2})^{-1}(\bfZ^{(n_1,n_2)})$.
This gives us the Cartesian square
\begin{equation}   
\label{eq:cart.Z[n1,n2]}
\begin{tikzcd}[arrows={-Stealth}]\bfZ^{[n_1,n_2]}\drar[phantom, "\square"]\dar["\mathfrak H_{\bfZ,n_1,n_2}"']\rar[phantom,"\sub"] 
  &\bfZ^{[n_1]}\times \bfZ^{[n_2]}\dar["\mathfrak H_{\bfZ,n_1}\times \mathfrak H_{\bfZ,n_2}"]
  \\
     \bfZ^{(n_1,n_2)}\rar[phantom,"\sub"] 
  &\bfZ^{(n_1)}\times \bfZ^{(n_2)}
  \end{tikzcd}
  \end{equation}

\begin{definition}
    For an algebraic variety $\bfZ$ define the subfunctor $$\mdef{Hilb_{n_1,n_2}}(\bfZ)\sub Hilb_{n_1}(\bfZ)\times Hilb_{n_2}(\bfZ): Sch_F^{op}\to sets$$ by 
    \begin{multline*}
        Hilb_{n_1,n_2}(\bfZ)(\bfS):=\\
        \{(\bfY_1,\bfY_2)\in Hilb_{n_1}(\bfZ)(\bfS)\times Hilb_{n_2}(\bfZ)(\bfS) \, \vert \, \bfY_1\cap \bfY_2=\emptyset\}
    \end{multline*}
\end{definition}

\begin{lemma}\label{lem:subfunct}
    The subfunctor $Hilb_{n_1,n_2}$ is represented by the open subscheme $\bfZ^{[n_1,n_2]}$.
\end{lemma}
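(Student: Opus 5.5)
We have to show two things. First, the condition $\bfY_1\cap\bfY_2=\emptyset$ defines an open subfunctor of $Hilb_{n_1}(\bfZ)\times Hilb_{n_2}(\bfZ)$. Second, the open subscheme representing it is exactly $\bfZ^{[n_1,n_2]}$. We argue with the universal families. Let $\bfX:=\bfZ^{[n_1]}\times\bfZ^{[n_2]}$, and let $\cY_1,\cY_2\subset \bfX\times\bfZ$ be the pullbacks of the two universal subschemes. Each $\cY_i$ is finite and flat over $\bfX$. Hence the scheme-theoretic intersection $\cY_1\cap\cY_2=\cY_1\times_{\bfX\times\bfZ}\cY_2$ is a closed subscheme of $\cY_1$, so it is finite over $\bfX$. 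Its image $\bfB\subset\bfX$ is therefore closed, and we set $\bfX':=\bfX\smallsetminus\bfB$.

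Next we check that $\bfX'$ represents $Hilb_{n_1,n_2}(\bfZ)$. A morphism $f:\bfS\to\bfX$ corresponds to a pair $(\bfY_1,\bfY_2)$ with $\bfY_i=f^*\cY_i$. Intersections of closed subschemes commute with base change, so $\bfY_1\cap\bfY_2=(\cY_1\cap\cY_2)\times_\bfX\bfS$. This is empty exactly when no point of $\bfS$ maps into $\bfB$: a nonempty fiber of a finite morphism over a point has a point, and conversely. So $f$ factors through $\bfX'$ if and only if $\bfY_1\cap\bfY_2=\emptyset$, which shows $\bfX'$ represents the subfunctor.

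It remains to identify $\bfX'$ with $\bfZ^{[n_1,n_2]}=(\mathfrak H_{\bfZ,n_1}\times\mathfrak H_{\bfZ,n_2})^{-1}(\bfZ^{(n_1,n_2)})$. Both are open subschemes of $\bfX$ with the reduced-induced (open) structure. Hence it suffices to compare their $\bar F$-points, and membership in an open set can be tested after extending scalars to $\bar F$. Take a point $x=(x_1,x_2)\in\bfX(\bar F)$. It lies in $\bfB$ iff the subschemes $\bfY_{x_1},\bfY_{x_2}\subset\bfZ_{\bar F}$ share a point, i.e.\ iff their supports meet. By the description of the Hilbert–Chow morphism in \Cref{thm:charact-HilChow}, these supports are the supports of the multisets $\mathfrak H_{\bfZ,n_i}(x_i)$. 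On the other side, by \Cref{lem:GamBij} and the Cartesian square \eqref{eq:cart.Z(n1,n2)}, a pair of multisets lies in $\bfZ^{(n_1,n_2)}(\bar F)$ iff some (equivalently every) lift to $\bfZ^{n_1}\times\bfZ^{n_2}$ lies in $\bfZ^{n_1,n_2}$, i.e.\ iff the two supports are disjoint. So the two open sets have the same $\bar F$-points and coincide.

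The main delicate point is this last comparison: we must make sure that openness and the point-set description of $\bfZ^{(n_1,n_2)}$ really reduce to $\bar F$-points. This relies on \Cref{lem:GamU} (the quotient of the open set $\bfZ^{n_1,n_2}$ is an open subset of the product, given by the Cartesian square) together with \Cref{lem:GamBij}. The other ingredient is that closed points of a finite-type $F$-scheme are detected by $\bar F$-points. Once this is in place, the representability statement itself is formal.
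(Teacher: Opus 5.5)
Your proposal is correct and follows the paper's proof. Your $\cY_1\cap\cY_2$ is exactly the paper's $\cW=\tilde\bfZ^{[n_1]}\times_\bfZ\tilde\bfZ^{[n_2]}$, whose finite image is removed; your base-change step is what the paper checks with the cancellation Lemma~\ref{lem:cancel.set}; and, as in the paper, you identify the result with $\bfZ^{[n_1,n_2]}$ by comparing $\bar F$-points via Theorem~\ref{thm:charact-HilChow} and Lemma~\ref{lem:GamBij} (flatness of the universal families is not actually needed, only finiteness).
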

For the proof we will need the following straightforward lemma:
\begin{lem}\label{lem:cancel.set}
Consider the following commutative diagram in arbitrary category.
$$
\begin{tikzcd}
    Z_{11} \arrow[d,"\delta_{1}"] \arrow[r, "\gamma_{11}"] & Z_{12}\arrow[dr, phantom, "\square"] \arrow[d,"\delta_{2}"] \arrow[r,"\gamma_{12}"]  & Z_{13} \arrow[d,"\delta_{3}"] \\
    Z_{21} \arrow[r,"\gamma_{21}"]  & Z_{22} \arrow[r, "\gamma_{22}"]  & Z_{23}
\end{tikzcd}
$$
Assume also that we have:
$$
\begin{tikzcd}
    Z_{11} \arrow[d,"\delta_{1}"] \arrow[r, "\gamma_{12}\circ\gamma_{11}"]\arrow[dr, phantom, "\square"] & Z_{13} \arrow[d,"\delta_{3}"]\\
    Z_{21} \arrow[r,"\gamma_{22}\circ\gamma_{21}"]  & Z_{23} 
\end{tikzcd}
$$
Then we have:
$$
\begin{tikzcd}
    Z_{11} \arrow[d,"\delta_{1}"] \arrow[r,"\gamma_{11}"] \arrow[dr, phantom, "\square"] & Z_{12} \arrow[d,"\delta_{2}"] \\
     Z_{21} \arrow[r, "\gamma_{21}"]  & Z_{22}
\end{tikzcd}
$$
\end{lem}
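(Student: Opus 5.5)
We prove \Cref{lem:cancel.set} by checking the universal property of the fiber product directly (pullback pasting); no earlier result of the paper is needed. Let $W$ be an object with morphisms $a:W\to Z_{12}$ and $b:W\to Z_{21}$ such that $\delta_2\circ a=\gamma_{21}\circ b$. We must produce a unique $u:W\to Z_{11}$ with $\gamma_{11}\circ u=a$ and $\delta_1\circ u=b$.

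\emph{Existence.} Put $a':=\gamma_{12}\circ a:W\to Z_{13}$. Using commutativity of the right square,
$$\delta_3\circ a'=\delta_3\circ\gamma_{12}\circ a=\gamma_{22}\circ\delta_2\circ a=\gamma_{22}\circ\gamma_{21}\circ b.$$
Hence, by the assumed Cartesian property of the outer rectangle, there is a unique $u:W\to Z_{11}$ with $\gamma_{12}\circ\gamma_{11}\circ u=a'$ and $\delta_1\circ u=b$. It remains to show that $\gamma_{11}\circ u=a$. Both $\gamma_{11}\circ u$ and $a$ are maps $W\to Z_{12}$, and we compare them through the right Cartesian square:
\begin{itemize}
\item after composing with $\gamma_{12}$, both give $a'$;
\item after composing with $\delta_2$, we get $\delta_2\circ\gamma_{11}\circ u=\gamma_{21}\circ\delta_1\circ u=\gamma_{21}\circ b=\delta_2\circ a$, using commutativity of the left square.
\end{itemize}
By the uniqueness part of the universal property of the right square, $\gamma_{11}\circ u=a$.

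\emph{Uniqueness.} Suppose $u'$ also satisfies $\gamma_{11}\circ u'=a$ and $\delta_1\circ u'=b$. Then $\gamma_{12}\circ\gamma_{11}\circ u'=a'$ and $\delta_1\circ u'=b$, so $u'=u$ by uniqueness for the outer rectangle.

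There is no real obstacle. The only point requiring care is that identifying $\gamma_{11}\circ u$ with $a$ uses the right square being Cartesian together with commutativity of the left square. Note that the argument is purely categorical: it uses only that the outer rectangle and the right square are pullbacks, and never that the left square is.
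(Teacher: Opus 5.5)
Your argument is correct. It is the standard pullback-pasting verification: you get existence from the outer rectangle, identify $\gamma_{11}\circ u$ with $a$ using uniqueness in the right Cartesian square together with commutativity of the left square, and get uniqueness again from the outer rectangle. The paper gives no proof, calling the lemma straightforward, and your proof is the routine check it leaves to the reader.
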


\begin{proof}[Proof of \Cref{lem:subfunct}]
$ $
\begin{enumerate}[Step 1.]
    \item 
$Hilb_{n_1,n_2}$ is represented by an open subscheme  of $\bfZ^{[n_1]}\times \bfZ^{[n_2]}$.\\
\begin{enumerate}[Step a.]
\item Construction of $\bfZ^{[n_1,n_2]}_0$.\\
Let $\tilde \bfZ^{[n_i]}\subset \bfZ\times \bfZ^{[n_i]}$ be the tautological scheme over $\bfZ^{[n_i]}$, i.e. the subscheme of $\bfZ\times \bfZ^{[n_i]}$ that corresponds to the identity map under the isomorphism 
$$Mor(\bfZ^{[n_i]},\bfZ^{[n_i]})\cong Hilb_{[n_i]}(\bfZ)(\bfZ^{[n_i]}).$$
Let  $\cW:=\tilde \bfZ^{[n_1]} \times_\bfZ \tilde\bfZ^{[n_2]}$. We have a natural embedding $\cW\subset \bfZ \times \bfZ^{[n_1]} \times \bfZ^{[n_2]}$.
Let $pr: \cW\to \bfZ^{[n_1]}\times \bfZ^{[n_2]}$ be the projection. Note that it is finite. Let $\bfZ^{[n_1,n_2]}_0:= \bfZ^{[n_1]}\times \bfZ^{[n_2]} \smallsetminus pr(\cW),$ and consider it as an open subscheme of $ \bfZ^{[n_1]}\times \bfZ^{[n_2]}$. 
\item Proof that $\bfZ^{[n_1,n_2]}_0$
 represents the functor $Hilb_{n_1,n_2}(\bfZ)$.\\
Note that for any $S\in Sch_F$ we have 
\begin{equation}    \label{eq:mor.Z0}
Mor(\bfS, \bfZ^{[n_1,n_2]}_0)=\left\{ \gamma \in Mor(\bfS,\bfZ^{[n_1]}\times \bfZ^{[n_2]})| \gamma^*(\cW)=\emptyset\right \},
\end{equation}
where $\gamma^*(\cW)$ is the object that makes the following square Cartesian:
$$\begin{tikzcd}
    \gamma^*(\cW) \drar[phantom, "\square"] \rar[""]\dar[""] &\cW\dar["pr"]\\
    \bfS \rar["\gamma"]&\bfZ^{[n_1]}\times \bfZ^{[n_2]} .
\end{tikzcd}$$
Write $\gamma=(\gamma_1,\gamma_2)$. The maps $\gamma_i$ corresponds to a $\bfY_i\in Hilb_{n_i}(\bfZ)(\bfS)$. We will show that $\gamma^*(\cW)=\bfY_1\cap \bfY_2$.

We have Cartesian squares:
\begin{equation}\label{eq:cart.sq.hilb}
\begin{tikzcd}
    \bfY_i \drar[phantom, "\square"] \rar[""]\dar[""] &\tilde \bfZ^{[n_i]}\dar["pr"]\\
    \bfS \rar["\gamma"]&
    \bfZ^{[n_i]} .
\end{tikzcd}
\end{equation}

Consider the following diagram:
$$
\begin{tikzcd}
    \bfY_1\cap \bfY_2
    \drar[phantom, "1"] \arrow[d] \arrow[r] & \cW \drar[phantom, "2"] \arrow[d,"pr"] \arrow[r,""] & \bfZ \arrow[d,"diag"] \\
    \bfY_1\times_\bfS \bfY_2 \arrow[d, ""] \arrow[r] \drar[phantom, "3"]& \tilde \bfZ^{[n_1]}\times \tilde \bfZ^{[n_2]} \arrow[d,""] \arrow[r,""] & \bfZ\times \bfZ \\
    \bfS \arrow[r,"\gamma"] & \bfZ^{[n_1]}\times  \bfZ^{[n_2]}
\end{tikzcd}
$$
where $diag$ is the diagonal map.
The square 3 is Cartesian because of \eqref{eq:cart.sq.hilb}.  The square 2 is Cartesian by the definition of $\cW$. It is obvious that the square 
$$
\begin{tikzcd}
    \bfY_1\cap \bfY_2
    \drar[phantom, "1\circ 2"] \arrow[d] \arrow[r] \arrow[d,"pr"] \arrow[r,""] & \bfZ \arrow[d,"diag"] \\    \bfY_1\times_\bfS \bfY_2 \arrow[r]  & \bfZ\times \bfZ 
\end{tikzcd}
$$
is Cartesian. Therefore, by \Cref{lem:cancel.set} the square 1 is Cartesian and thus 
$\gamma^*(\cW)=\bfY_1\cap \bfY_2$
Together with \eqref{eq:mor.Z0} this completes the step.
\end{enumerate}
\item $Mor(\spec \bar F,\bfZ^{[n_1,n_2]})=Hilb_{n_1,n_2}(\bfZ)(\spec \bar{F}).$\\
Follows directly from the definitions of $\bfZ^{[n_1,n_2]}$ and $Hilb_{n_1,n_2}(\bfZ)$, the characterization of the Hilbert-Chow map (\Cref{thm:charact-HilChow}) and \Cref{lem:GamBij}.
\item End of the proof.\\
Follows from the 2 previous steps and the fact that an open subset in a variety is determined by its $\bar F$ points. The later follows from the Nullstellensatz. 
\end{enumerate}
\end{proof}

\begin{notation}
    Define a natural transformation $${\RamiC{\cC}_{[n_1,n_2]}^\bfZ}:Hilb_{n_1,n_2}(\bfZ)\to Hilb_{n}(\bfZ)$$
    by 
    $$\mdef{\RamiC{\cC}_{[n_1,n_2]}^\bfZ(\bfY_1,\bfY_2)}:=\bfY_1
    \sqcup \bfY_2$$
   By \Cref{lem:subfunct} this morphism defines a morphism $\bfZ^{[n_1,n_2]}\to \bfZ^{[n]},$ that we will also  denote  by $\RamiC{\cC}_{[n_1,n_2]}^\bfZ$. 
\end{notation}

\begin{notation}
    Let $\overline{\cC}^{\bfZ}_{(n_1,n_2)}:\bfZ^n//(S_{n_1}\times S_{n_2})\to \bfZ^n//S_n=\bfZ^{(n)}$ denote the natural map. By \Cref{lem:GamU}, $\bfZ^{(n_1,n_2)}$ can be considered as an open subset of $\bfZ^n//(S_{n_1}\times S_{n_2})$. Denote the restriction of $\overline{\cC}^{\bfZ}_{(n_1,n_2)}$ to $\bfZ^{(n_1,n_2)}$ by $\cC^{\bfZ}_{(n_1,n_2)}$. Finally, we let $\cC^{\bfZ}_{n_1,n_2}:\bfZ^{n_1,n_2}\to \bfZ^n$ be the map given by concatenation of tuples.
\end{notation}
Now we defined all the arrows in the diagram \eqref{=n1n2}. It remains to show that:

\begin{enumerate}[(a)]
    \item \label{Pfn1n2:com}The diagram is commutative.
    \item \label{Pfn1n2:Car} The top left square is Cartesian on the level of $\RamiC{\bar F}$ points.
    \item \label{Pfn1n2:etale}The map $\cC^{\bfZ}_{[n_1,n_2]}$ is {\'e}tale. 
\end{enumerate}
It is enough to prove \eqref{Pfn1n2:com} on the level of $\RamiC{\bar F}$ points. Thus \eqref{Pfn1n2:com} and \eqref{Pfn1n2:Car} are straightforward computations in view of \Cref{lem:GamU}, the definition of Hilbert scheme (\Cref{def:Hscheme}), and the characterization of the Hilbert-Chow map (\Cref{thm:charact-HilChow}).

It remains to show \eqref{Pfn1n2:etale}. 
By \Cref{thm:dim.hilb}, the variety $\bfZ^{[n]}$ is smooth. By diagrams (\ref{eq:cart.Z(n1,n2)},\ref{eq:cart.Z[n1,n2]}) the variety   $\bfZ^{[n_1,n_2]}$ is an open subset of $\bfZ^{[n]}$, and thus  is also smooth. 
It is enough to show that the map $\cC^{\bfZ}_{[n_1,n_2]}$ is {\'e}tale at any closed point of 
$\bfZ^{[n_1,n_2]}$. Equivalently, we have to show that for any finite field extension $E$ over $F$, and any $y\in \bfZ^{[n_1,n_2]}(E)$, the differential $d_y\cC^{\bfZ}_{[n_1,n_2]}$ is an isomorphism. Without loss of generality we assume that $E=F$ and $\bfZ$ is affine. 

To $x\in \bfZ^{[n]}(F)$ we can assign an ideal $I\vartriangleleft \cO_{\bfZ}(\bfZ)$. This gives us an identification 
$$T_x\bfZ^{[n]}=\{\widetilde{I} \vartriangleleft \cO_{\bfZ}(\bfZ)[t]/t^2 \,\vert \, (\cO_{\bfZ}(\bfZ)[t]/t^2)/\widetilde{I}\simeq (F[t]/t^2)^n \text{ and }\widetilde{I}/t=I\}$$
Here, the isomorphism is an isomorphism of $F[t]/t^2$-modules. 
Define 
$$\gamma_{\bfZ,n,x}:\Hom(I,\cO_{\bfZ}(\bfZ)/I)\to T_x\bfZ^{[n]}$$
by $\gamma_{\Rami{\bfZ,n,x}}(\eps)=\{a+tb\, \vert \, a\in I, b\in \eps(a)\}$. It is easy to see that $\gamma_{\Rami{\bfZ,n,x}}$ is an isomorphism (cf. \cite[Lemma 7.2.5]{BK05}). 
Let $$y=(x_1,x_2)\in \bfZ^{[n_1,n_2]}(F)\subset \bfZ^{[n_1]}(F)\times \bfZ^{[n_2]}(F).$$ 
\Rami{We have to show that $d_y\cC^{\bfZ}_{[n_1,n_2]}$ is an isomorphism.}
Let $I_1,I_2\vartriangleleft \cO_{\bfZ}(\bfZ)$ be the ideals corresponding to the points $x_1,x_2$. The Chinese remainder theorem gives an identification 
$$\delta_0:\cO_{\bfZ}(\bfZ)/I_1\oplus \cO_{\bfZ}(\bfZ)/I_2\cong \cO_{\bfZ}(\bfZ)/(I_1\cap I_2)$$ 
This gives an identification
$$\delta_1:\Hom(I_1\cap I_2,\cO_{\bfZ}(\bfZ)/I_1\oplus \cO_{\bfZ}(\bfZ)/I_2)\cong \Hom(I_1\cap I_2,\cO_{\bfZ}(\bfZ)/(I_1\cap I_2))$$
Define a morphism $$\delta: \Hom(I_1,\cO_{\bfZ}(\bfZ)/I_1) \times \Hom(I_2,\cO_{\bfZ}(\bfZ)/I_2)\to \Hom(I_1\cap I_2,\cO_{\bfZ}(\bfZ)/(I_1\cap I_2))$$  by 
$$\delta(\eps_1,\eps_2):= \delta_1(\eps_1|_{I_1\cap I_2},\eps_2|_{I_1\cap I_2})$$
It is easy to see that the following diagram is commutative.

$$
\hspace{-1.5em}
\begin{tikzcd}
\mathbf{T}_{x_1}\bfZ^{[n_1]} \times \mathbf{T}_{x_2}\bfZ^{[n_2]}
  \arrow[r, "\cong", phantom]
&
\mathbf{T}_y\bfZ^{[n_1,n_2]}
  \arrow[r,"d_y\cC^{\bfZ}_{[n_1,n_2]}"]
&
\mathbf{T}_{\cC^{\bfZ}_{[n_1,n_2]}(y)}\bfZ^{[n]}
\\
\Hom(I_1,\cO_{\bfZ}(\bfZ)/I_1) \times \Hom(I_2,\cO_{\bfZ}(\bfZ)/I_2)
  \arrow[
    u,
    "\gamma_{\bfZ,n_1,x_1} \times \gamma_{\bfZ,n_2,x_2}",
    description,
    "\rotatebox{90}{$\sim$}"'
  ]
  \arrow[rr,"\delta"]
& &
\Hom(I_1\cap I_2,\cO_{\bfZ}(\bfZ)/(I_1\cap I_2))
  \arrow[
    u,
    "\gamma_{\bfZ,n,\cC^{\bfZ}_{[n_1,n_2]}(y)}",
    description,
    "\rotatebox{90}{$\sim$}"'
  ]
\end{tikzcd}
$$
Thus it is enough to show that $\delta$ is an isomorphism. 
Let $\cI_1,\cI_2\vartriangleleft \cO_{\bfZ}$ be sheaves of ideals corresponding to the ideals $I_1,I_2$.  $\delta$ defines a morphism of sheaves 
$$\widetilde{\delta}: \cH om(\cI_1,\cO_{\bfZ}(\bfZ)/\cI_1) \times \cH om(\cI_2,\cO_{\bfZ}(\bfZ)/\cI_2)\to \cH om(\cI_1\cap \cI_2,\cO_{\bfZ}(\bfZ)/(\cI_1\cap \cI_2)),$$
where $\cH om$ denotes internal $\Hom$ of sheaves. 
It is enough to show that $\widetilde{\delta}$ is an isomorphism. Let $U_i$ be the complement to the zero locus of $\cI$. It is enough to show that $\delta|_{U_i}$ is an isomorphism for $i=1,2$. This is obvious. 
\section{Proof of \Cref{thm:main}}\label{sec:Pfmain}
\Cref{thm:main} follows from \Cref{thm:int} and the following lemma:
\begin{lemma}
    $((\bA^2)^{(n)})^{sm}$ admits an invertible top form.
\end{lemma}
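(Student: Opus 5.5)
Let $\bfZ:=\bA^2$ and $\bfU:=\bfZ^{(n)}_0=\iota_{\bfZ,n}(\bfZ^n_0)$. The plan is to build an invertible top form on $\bfU$ by Galois descent. We then show that $\bfU$ is exactly the smooth locus, or at least that the form extends invertibly to all of $((\bA^2)^{(n)})^{sm}$.

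First we construct the form on $\bfU$. The action of $S_n$ on $\bfZ^n_0$ is free, and by \Cref{cor:fac} and \Cref{lem:GamU} it is factorizable with quotient $\bfU$, an open subset of $\bfZ^{(n)}$. By \Cref{lem:Gam} the map $\bfZ^n_0\to\bfU$ is étale, so $\bfU$ is smooth. Let $\omega_0=dx_1\wedge dy_1\wedge\cdots\wedge dx_n\wedge dy_n$ be the standard form on $\bfZ^n=\bA^{2n}$. A transposition swaps two pairs $(dx_i\wedge dy_i)$ and $(dx_j\wedge dy_j)$. These are $2$-forms, hence commute, so $\omega_0$ is $S_n$-invariant in every characteristic. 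This is the key point, and it is what fails for $\bA^1$. The sheaf $\Omega^{top}$ pulls back isomorphically under étale maps, so by \Cref{cor:GalDes}\eqref{it:sh} $\omega_0$ descends to a top form $\omega_\bfU$ on $\bfU$. This form is invertible, since its pullback under the surjective étale map is invertible.

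Next we extend from $\bfU$ to the smooth locus. Set $\bfW:=((\bA^2)^{(n)})^{sm}$. By \Cref{cor:n1n2}\eqref{n1n2:big}, $\bfU\subset\bfZ^{(n)}$ is big, so $\bfU\subset\bfW$ is big as well. On the smooth variety $\bfW$ the sheaf $\Omega^{top}$ is a line bundle, and by Hartogs (normality) $\omega_\bfU$ extends to a regular form $\omega$ on $\bfW$. To see that $\omega$ is invertible, note that its zero locus is a divisor in $\bfW$, since $\Omega^{top}$ is invertible and $\bfW$ is smooth. That divisor is contained in $\bfW\smallsetminus\bfU$, which has codimension at least $2$. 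Hence the zero locus is empty and $\omega$ is invertible on $\bfW$.

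The main obstacle is only bookkeeping: one must check that descent of forms via \Cref{cor:GalDes} applies to $\Omega^{top}$, which reduces to the identity $\beta^*\Omega^{top}_{\bfU}\cong\Omega^{top}_{\bfZ^n_0}$ for étale $\beta$. The sign check on $S_n$-invariance is where characteristic plays no role, precisely because we use pairs of coordinates. With the lemma in hand, \Cref{thm:main} follows from \Cref{thm:int}: the form $\mathfrak H_{\bA^2,n}^*(\omega)$ is regular, and by sharpness it vanishes nowhere on the smooth variety $(\bA^2)^{[n]}$.
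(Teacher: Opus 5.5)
Your proof is correct and follows essentially the same route as the paper: descend the $S_n$-invariant standard form along the free {\'e}tale quotient $\bfZ^n_0\to\bfZ^{(n)}_0$ by Galois descent, then extend it across the big complement inside the smooth locus. You also spell out two points the paper only asserts. The $S_n$-invariance holds in every characteristic because the factors $dx_i\wedge dy_i$ are $2$-forms and commute. The extended form stays invertible because its zero locus would be a divisor contained in a set of codimension at least $2$, hence empty.
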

\begin{proof}
    Let $\bfZ=\bA^2$. Let $(\bfZ^{n})^0\subset \bfZ^{n}$ be the open set of tuples of pairwise different points in $\bfZ$. By \Cref{lem:GamU} we have the following Cartesian square:
$$\begin{tikzcd}
    (\bfZ^n)^0 \drar[phantom, "\square"] \rar[phantom,"\subset"]\dar["\iota_{\bfZ,n}^0"] &\bfZ^n\dar["\iota_{\bfZ,n}"]\\
    (\bfZ^{(n)})^0 \rar[phantom,"\subset"]&
    \bfZ^{(n)}
\end{tikzcd}$$
with the horizontal inclusions being open.
    Let 
    $\omega_{\bfZ^{n}}$ 
    be the standard form on $\bfZ^{n}$ and let 
    $\omega_{(\bfZ^{n})^0}$ be its restriction to $(\bfZ^{n})^0$. 
    By \Cref{lem:Gam} the map $\iota_{\bfZ,n}^0$ is {\'e}tale.  So, 
    $\Omega^{top}((\bfZ^{n})^0)\cong (\iota_{\bfZ,n}^0)^*(\Omega^{top}((\bfZ^{(n)})^0))$

        Note that $\omega_{(\bfZ^{n})^0}$ is $S_n$ invariant. So by \Cref{cor:GalDes}
     it gives a top form $\omega_{(\bfZ^{(n)})^0}$ on $(\bfZ^{(n)})^0$ s.t. $(\iota_{\bfZ,n}^0)^*(\omega_{(\bfZ^{(n)})^0})=\omega_{(\bfZ^{n})^0}$. 
    
    Since $\iota_{\bfZ,n}^0$ is {\'e}tale, the fact that  $\omega_{(\bfZ^{\Dima{n}})^0}$  is invertible implies that      $\omega_{(\bfZ^{(n)})^0}$ is invertible. It is easy to see that $(\bfZ^{(n)})^0$ is big in $\bfZ^{(n)}$.  Thus $\omega_{(\bfZ^{(n)})^0}$ can be extended to an invertible top form on $(\bfZ^{(n)})^{sm}$ as required.
\end{proof}
\begingroup
  \let\clearpage\relax
  \let\cleardoublepage\relax 
  \printindex
\endgroup

\bibliographystyle{alpha}
\bibliography{Ramibib}

\end{document}